\newtheorem{definition}{Definition}[section]
\newtheorem{proposition}[definition]{Proposition}
\newtheorem{lemma}[definition]{Lemma}
\newtheorem{theorem}[definition]{Theorem}
\newtheorem{remark}[definition]{Remark}
\newtheorem{corollary}[definition]{Corollary}
\newenvironment{proof}{\begin{trivlist}\item[] \textit{Proof.}}
                     {\hspace*{\fill} $\square$\end{trivlist}}
\newenvironment{pfofthm42}{\begin{trivlist}\item[] \textit{Proof of Theorem \ref{uni and attr}.}}
                     {\hspace*{\fill} $\square$\end{trivlist}}
\newcommand{\bE}{\mathbb{E}}
\newcommand{\bN}{\mathbb{N}}
\newcommand{\bP}{\mathbb{P}}
\newcommand{\bR}{\mathbb{R}}
\newcommand{\bT}{\mathbb{T}}
\newcommand{\bU}{\mathbb{U}}
\newcommand{\cT}{\mathcal{T}}
\begin{document}

\title{A recursive distribution equation for the stable tree\thanks{University of Oxford, niccheekn@gmail.com, franz.rembart@gmail.com, winkel@stats.ox.ac.uk}}

\author{%
  Nicholas~Chee%\thanks{niccheekn@gmail.com} 
  \and %% remove this line and below if single author
  Franz~Rembart%\thanks{franz.rembart@gmail.com}
  \and Matthias~Winkel%\thanks{winkel@stats.ox.ac.uk}
}

\maketitle

\begin{abstract}
  \noindent We provide a new characterisation of Duquesne and Le Gall's $\alpha$-stable tree, \vspace{-0.1cm} $\alpha\!\in\!(1,2]$,\linebreak as the solution of a recursive 
  distribution equation (RDE) of the form $\mathcal{T}\! \overset{d}{=}\! g\left(\xi,\!\mathcal{T}_i, i\!\geq\!0\right)$, where $g$ is a concatenation 
  operator, $\xi = \left(\xi_i, i\geq 0\right)$ a sequence of scaling factors, $\mathcal{T}_i$, $i \geq 0$, and $\mathcal{T}$ are i.i.d.\ 
  trees independent of $\xi$. This generalises a version of the well-known characterisation of the Brownian Continuum Random Tree due to Aldous, 
  Albenque and Goldschmidt. By relating to previous results on a rather different class of RDE, we explore the present RDE and obtain for a large 
  class of similar RDEs that the fixpoint is unique (up to multiplication by a constant) and attractive. 
  
  Keywords: Recursive distribution equation; $\mathbb{R}$-tree; Gromov--Hausdorff distance; stable tree
  
  AMS subject classification: 60J80; 60J05.
\end{abstract}

\section{Introduction}

\label{Introduction}

$\mathbb{R}$-trees, constitute a class of loop-free length spaces which frequently arise as scaling limits of many discrete trees 
\cite{d06}. In their own right, $\mathbb{R}$-trees have diverse applications from rough path integration theory \cite{hambly2006uniqueness} to
phylogenetic models \cite{felsenstein2004inferring}. Following Aldous's introduction of the Brownian Continuum Random Tree (BCRT) \cite{a91a,a91b,a93}, significant attention turned to random $\mathbb{R}$-trees. Naturally, the BCRT manifests in the asymptotics of discrete tree-like structures, including uniform random labelled trees \cite{a91a,a93} and critical Galton--Watson trees with finite offspring variance \cite{a91a}. Bewilderingly, recent applications of the BCRT have surpassed objects not overtly tree-like, for example, random recursive triangulations \cite{curien2011random}, random planar quadrangulations \cite{miermont2013brownian}, and Liouville quantum gravity \cite{duplantier2014liouville}. 

The BCRT was generalised by Duquesne and Le Gall's $\alpha$-stable trees \cite{duquesne2002random,duquesne2005probabilistic}, parameterised by $\alpha \in (1,2]$. The $\alpha$-stable trees are themselves a special case of Le Gall and Le Jan's L\'{e}vy trees \cite{g98}, representing the genealogies of continuous-state branching processes with branching mechanism $\psi(\lambda)=\lambda^{\alpha}$. When $\alpha = 2$, we recover the BCRT. Akin to the BCRT, the family of $\alpha$-stable trees constitutes all possible scaling limits of Galton--Watson trees, conditioned on the total progeny, whose offspring distribution lies in the domain of attraction of an $\alpha$-stable law \cite{duquesne2003limit}. Likewise, $\alpha$-stable trees emerge in scaling limits of numerous discrete tree structures, e.g., vertex-cut Galton--Watson trees \cite{dieuleveut2015vertex} and conditioned stable L\'{e}vy forests \cite{chaumont2007genealogy}. Pursuing a dedicated approach with L\'{e}vy processes gives links to superprocesses \cite{duquesne2002random,g98}, and beta-coalescents in genetic models \cite{abraham2013beta,berestycki2007beta}. Particular aspects of $\alpha$-stable trees, such as, invariance under uniform re-rooting \cite{haas2009spinal}, Hausdorff and packing measures \cite{duquesne2012exact,duquesne2005hausdorff,
duquesne2005probabilistic}, spectral dimensions \cite{croydonhambly2010spectral}, heights and diameters \cite{duquesne2015decomposition}, and an embedding property of stable trees \cite{curien2013stable}, have also been closely studied. 

We wish to emphasise a crucial self-similarity property of $\alpha$-stable trees. This property plausibly explains the prevalence of $\alpha$-stable trees in such diverse contexts, especially in problems of a recursive nature. Decomposing an $\alpha$-stable tree above a certain height or at appropriate nodes results in the connected components after decomposition forming  rescaled independent copies of the original tree. This observation was first formalised by Miermont \cite{miermont2003self,miermont2005self}, building upon Bertoin's self-similar fragmentation theory \cite{bertoin2002self}. 

In this paper, we express the self-similarity of the $\alpha$-stable tree by a new \textit{recursive distribution equation} (RDE) in the setting of Aldous and Bandyopadhyay's survey paper \cite{ab05}. Given a random variable $\mathcal{T}$ valued in a Polish metric space $(\mathbb{T},d)$, an RDE is a stochastic equation of the form
\begin{equation*}
\mathcal{T} \overset{d}{=} g \left(\xi,\mathcal{T}_i, i\geq 0\right) \quad \textnormal{ on } \mathbb{T},
\end{equation*}
where $(\mathcal{T}_i, i \geq 0)$ are i.i.d.\ and distributed as $\mathcal T$, $g$ is a measurable mapping, and $\xi$ is independent of $\left(\mathcal{T}_i, i\geq 0\right)$. RDEs are pertinent in various contexts with recursive structures, including Galton--Watson branching processes \cite{ab05}, Poisson weighted infinite trees \cite{bandyopadhyay2003bivariate}, and Quicksort algorithms \cite{rr01}. 

RDEs have been employed in the recursive construction of the BCRT by Albenque and Goldschmidt \cite{ag15}, recursively concatenating three
rescaled trees at a single point. Broutin and Sulzbach \cite{broutin2016self} extended this to further recursive combinatorial structures and 
weighted $\mathbb{R}$-trees under a finite concatenation operation. Rembart and Winkel \cite{rw16} did similarly with $\mathbb{R}$-trees under a 
different operation that concatenates a countable (possibly infinite) number of rescaled trees to a branch/spine. See Figure 
\ref{fig:decs}.

\begin{figure}[t]
  $\;$\hfill\includegraphics[width=6.5cm]{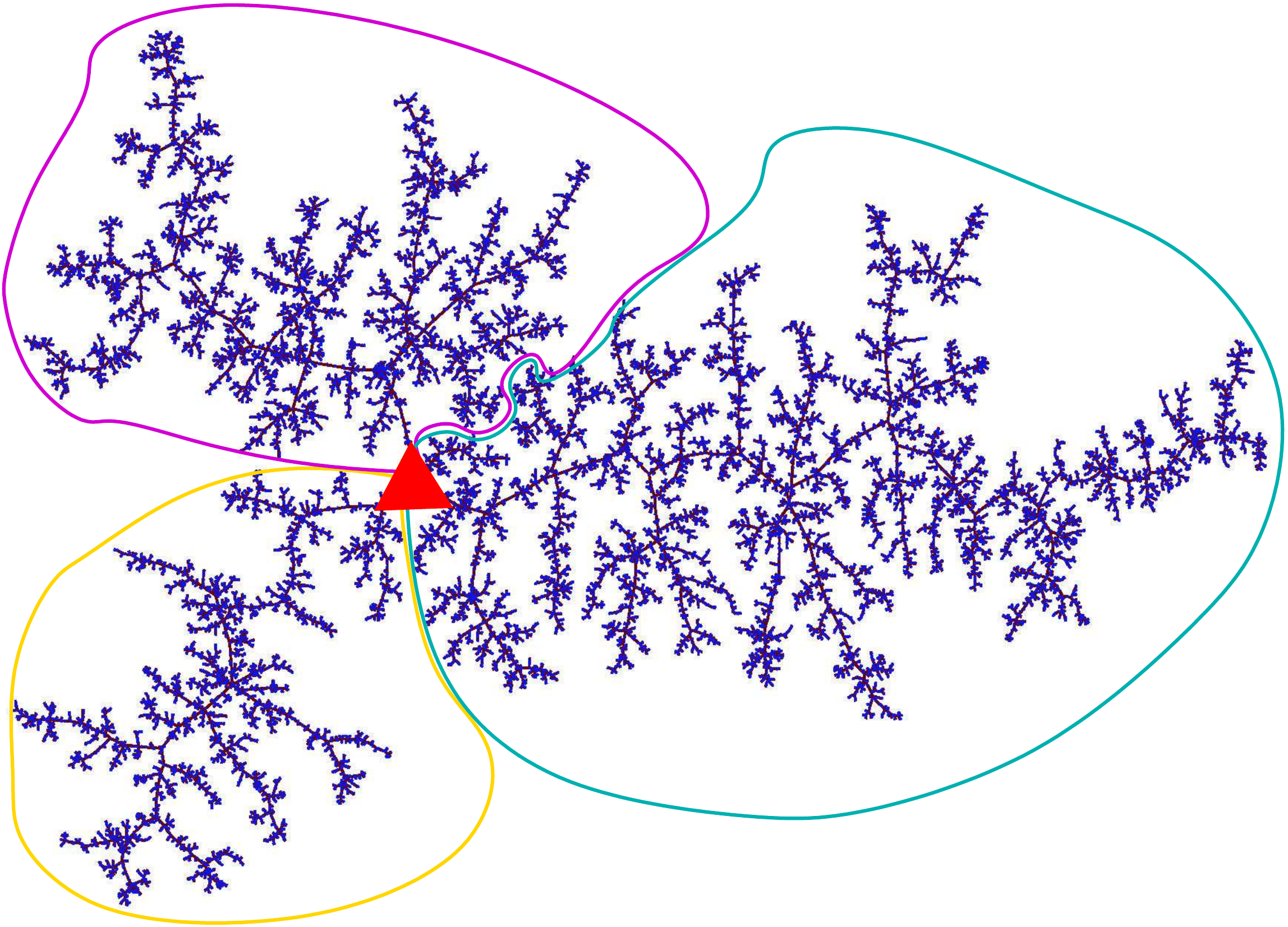}\hfill \includegraphics[width=6.5cm]{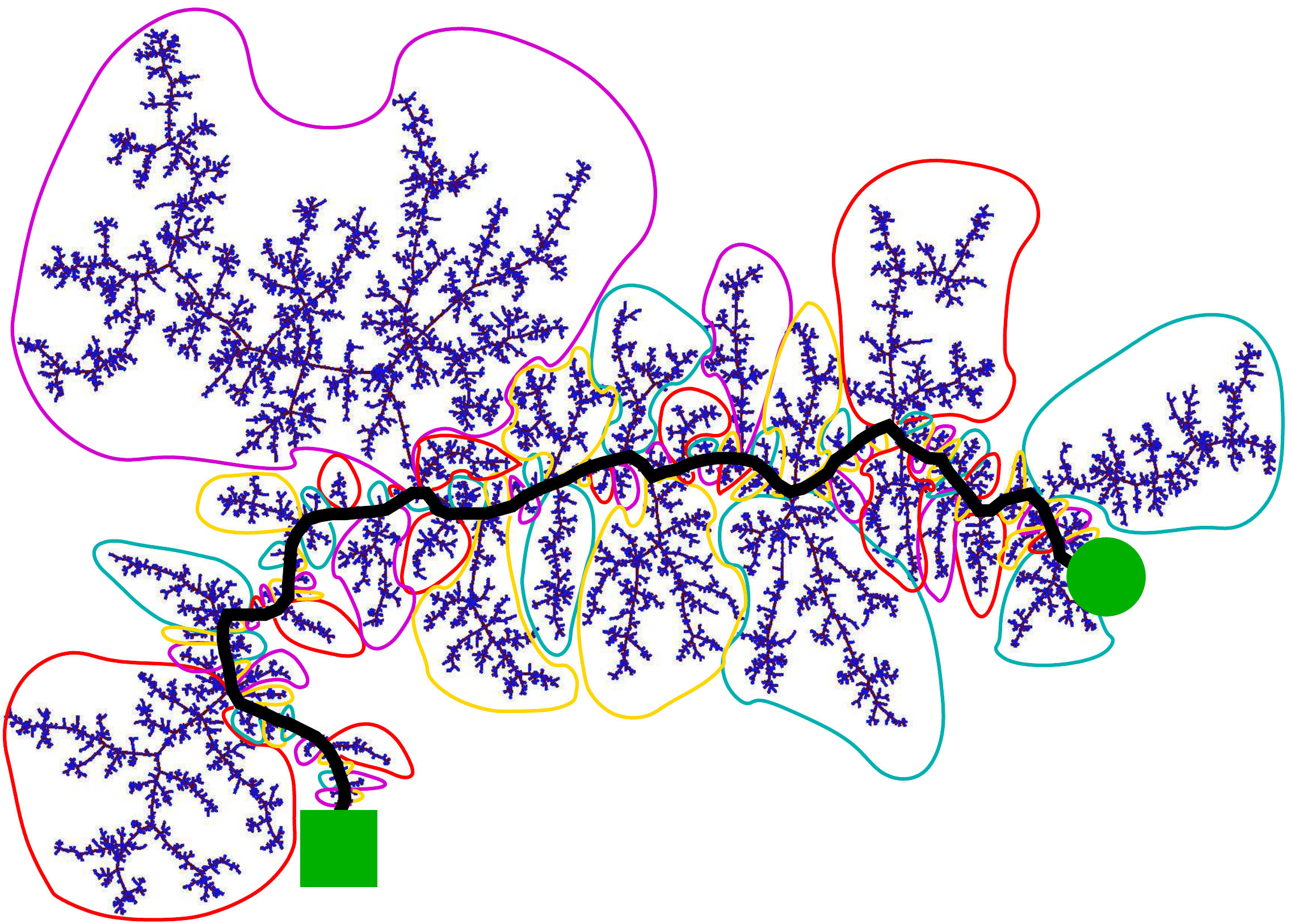}\hfill$\;$
  \caption{RDEs derived from the decomposition of the BCRT (simulation courtesy of Igor Kortchemski) around a branchpoint 
  (the red triangle) into three parts and along the spine from the root (green square) to a random leaf (green circle) into ``infinitely many'' parts.}
  \label{fig:decs}
\end{figure}

In this paper, we consider as $g$ the operation that concatenates at a single point a countable number of $\mathbb{R}$-trees 
$\mathcal{T}_i \overset{d}{=} \mathcal{T}$, rescaled by $\xi_i\geq 0$, $i\geq 0$, respectively, seeking to obtain a version of $\mathcal{T}$. 
Theorem \ref{RDE for alpha-stable tree THM} shows that the law of the $\alpha$-stable tree is a fixpoint solution of an RDE of this type. This
is illustrated in Figure \ref{fig:decs2}. 
Our primary argument appeals to Marchal's random growth algorithm \cite{marchal2008note}, which provides a recursive method of constructing 
$\alpha$-stable trees as a scaling limit. To explore the uniqueness of this solution (up to rescaling distances by a constant) we first observe
that we require 
certain finite height moments. In the absence of this condition, further solutions can be obtained, for example, by decorating the $\alpha$-stable tree with massless branches, see Remark \ref{RDE alpha counterexample Prop}. 

\begin{figure}[t]
  $\;$\hfill
  \includegraphics[width=13cm]{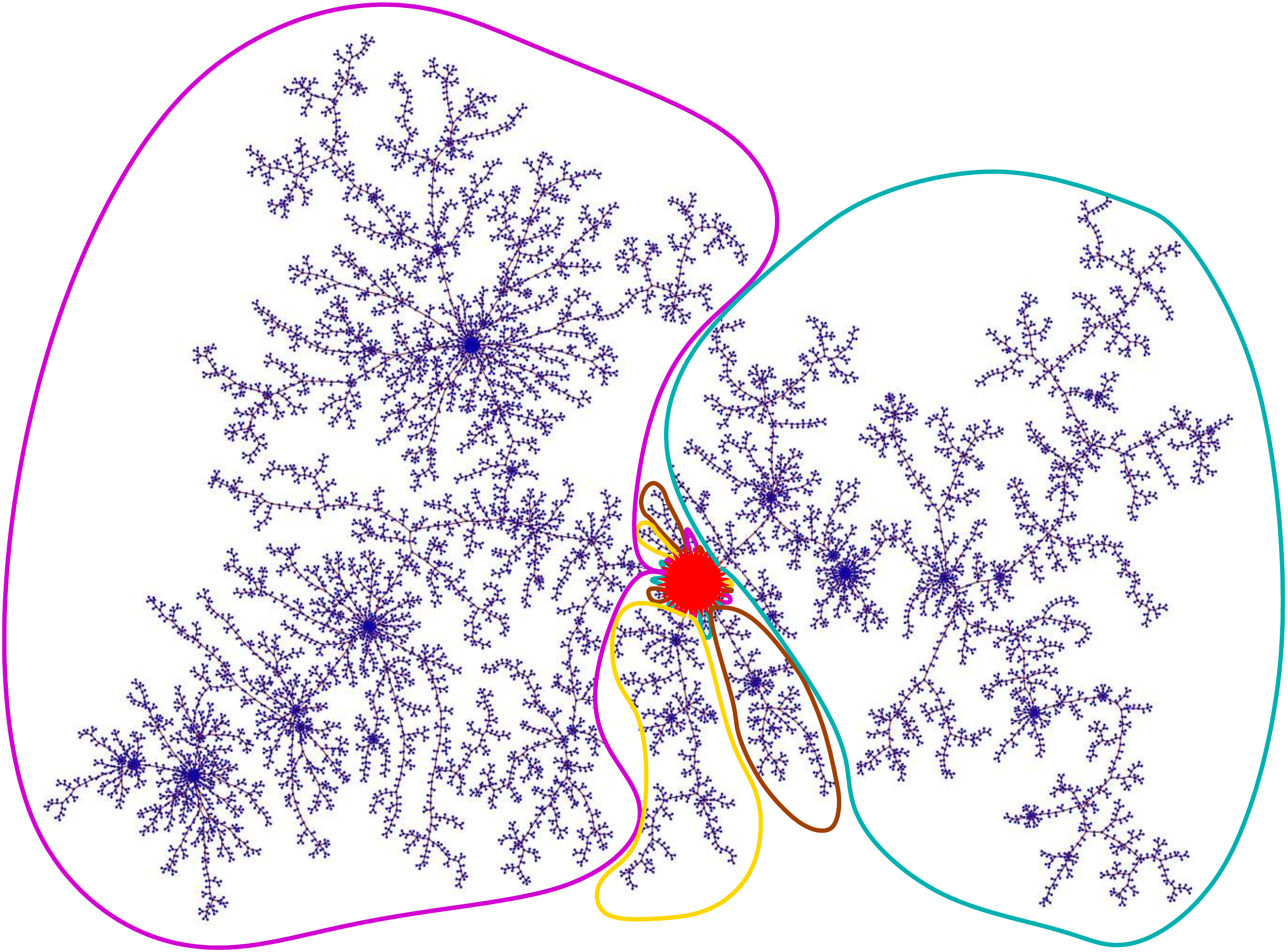}
  \hfill$\;$
  \caption{RDEs derived from the decomposition of a stable tree (simulation courtesy of Igor Kortchemski) around a branchpoint (the red star) into ``infinitely many'' parts.\vspace{-0.2cm}}
  \label{fig:decs2}
\end{figure}

Let us explore our approach to uniqueness and attraction in the context of the literature. While our results closely resemble  
\cite{ag15} and \cite{broutin2016self} for the (binary) BCRT and other finitely branching structures, our methods are rather different. 
Indeed, our results extend finite concatenation operations to handle trees such as the $\alpha$-stable trees, whose branch points are of countably 
infinite multiplicity. Extending their uniqueness and attraction results is not straightforward using the methods of \cite{ag15,broutin2016self}. 
On the other hand, \cite{rw16} 
presents an RDE for which the law of the $\alpha$-stable tree is a unique and attractive fixpoint, but the concatenation approach of employing 
strings of beads (weighted intervals) and bead-splitting processes of \cite{pitman2015regenerative} is different. Our RDEs only require countably 
infinite weight sequences such as Poisson--Dirichlet sequences and gives a less technical recursive construction of $\alpha$-stable trees that 
elucidates how mass partitions in $\alpha$-stable trees relate to urn models and partition-valued processes. 

Specifically, we prove the self-similarity property of $\alpha$-stable trees decomposed at a branch point solely via the recursive nature of 
Marchal's algorithm, without need for Miermont's fragmentation tree theory \cite{miermont2005self}. To prove our uniqueness and attraction result,
Theorem \ref{uni and attr}, we establish a connection between the two types of RDE, which effectively breaks down the proofs here into a 
one-dimensional martingale argument, the uniqueness and attraction of the RDE of \cite{rw16} and a tightness argument that again builds on 
\cite{rw16} by constructing an auxiliary dominating CRT.%\medskip

The structure of this paper is as follows. In Section \ref{Preliminaries}, we state background results on $\mathbb{R}$-trees and 
$\alpha$-stable trees and collect tools required to obtain our results, namely the rigorous setup of RDEs, P\'{o}lya urn models, the Chinese 
restaurant process and Marchal's algorithm. Section \ref{Recursive} is dedicated to establishing an RDE for the law  of the $\alpha$-stable tree 
and indicating other fixpoint solutions to the same RDE. In Section \ref{secuniq}, we obtain the uniqueness and attraction properties of the RDE 
solution up to multiplicative constants. The latter arguments are in a general setup where $g$ is the single-point concatenation operation, but the 
distribution of $\xi$ is just subject to some non-degeneracy assumptions.

\section{Preliminaries} \label{Preliminaries}

We introduce several background formalisms and theories on metric spaces of $\mathbb{R}$-trees, $\alpha$-stable trees, urn schemes and recursive
distribution equations. We also state a general lemma that we will use to establish independence.%\pagebreak

\subsection{$\mathbb{R}$-trees and topologies on sets of (weighted or marked) $\mathbb{R}$-trees} \label{R-trees subection}

\begin{definition}[$\mathbb R$-tree] \label{R-tree} \rm
A metric space $ \left( \mathcal{T},d \right)$ is an $\mathbb{R}$\textit{-tree} if for every $a,b\in \mathcal{T}$, the following two conditions hold.
\begin{itemize}

\item[(i)] There exists a unique isometry $f_{a,b}\colon\left[0,d(a,b)\right] \rightarrow \mathcal{T}$ such that 
$f_{a,b}(0)=a$ and $f_{a,b}(d(a,b))=b$. In this case, let $\llbracket a,b\rrbracket$ denote the image $f_{a,b}\left(\left[0,d(a,b)\right]\right)$.

\item[(ii)] If $h\colon\left[0,1\right] \rightarrow \mathcal{T}$ is a continuous injective map with $h\left(0\right) = a$ and $h\left(1\right) = b$, then $h\left(\left[0,1\right]\right) = \llbracket a,b \rrbracket$, 
i.e. the only non self-intersecting path from $a$ to $b$ is $\llbracket a,b \rrbracket$.
\end{itemize}
\end{definition}
A \textit{rooted} $\mathbb{R}$-tree $(\mathcal{T},d,\rho)$ is an $\mathbb{R}$-tree $(\mathcal{T},d)$ with a distinguished vertex $\rho\in \mathcal{T}$ called the \textit{root}. The \textit{degree} of a vertex $a \in \mathcal{T}$ is the number of connected components of  $\mathcal{T}\setminus \lbrace a \rbrace$. A \textit{leaf} is a vertex $a \in \mathcal{T} \setminus \lbrace \rho \rbrace$ with degree one. We denote the set of leaves in $\mathcal{T}$ by $\mathcal{L}\left(\mathcal{T}\right)$. We say that $ a \in \mathcal{T} \setminus \lbrace \rho \rbrace $ is a \textit{branch point} if its degree is at least three. Finally, for any $a \in \mathcal{T}$, we define the \textit{height} of $a$ as $d\left(\rho,a\right)$, and the \textit{height} of $\mathcal{T}$ as $\textnormal{ht}\left(\mathcal{T}\right):=\sup_{a \in \mathcal{T}} d\left(\rho,a\right)$.
 
Two rooted $\mathbb{R}$-trees $\left(\mathcal{T},d,\rho\right)$ and $\left(\mathcal{T}',d',\rho'\right)$ are \textit{{\rm GH}-equivalent} if there exists an isometry $f\colon \mathcal{T} \rightarrow \mathcal{T}'$ such that $f\left(\rho\right)=\rho'$. The set of GH-equivalence classes of compact rooted $\mathbb{R}$-trees is denoted by $\mathbb{T}$.  
The \textit{Gromov--Hausdorff distance} between two rooted compact $\mathbb{R}$-trees $\left(\mathcal{T},d,\rho\right)$ and $\left(\mathcal{T}',d',\rho'\right)$ is defined as
\begin{equation} \label{GH metric}
d_{\rm GH}((\mathcal{T},d,\rho),(\mathcal{T}',d',\rho')):=\inf_{\phi,\phi'}\left(\delta_{\rm H}(\phi(\mathcal{T}),\phi'(\mathcal{T}')) \vee \delta (\phi(\rho),\phi'(\rho'))\right)
\end{equation} 
where the infimum is taken over all metric spaces $\left(X,\delta\right)$ and all isometric embeddings $\phi\colon\mathcal{T} \rightarrow X$ and $\phi'\colon\mathcal{T}' \rightarrow X$, and where $\delta_{\rm H}$ is the Hausdorff metric on compact subsets of $(X, \delta)$. The Gromov--Hausdorff distance only depends on the GH-equivalence classes of $\left(\mathcal{T},d,\rho\right)$ and $\left(\mathcal{T}',d',\rho'\right)$ and induces a metric on $\mathbb{T}$, which we also denote by $d_{\rm GH}$. 

There is an alternative characterisation of the Gromov--Hausdorff metric \cite[Theorem~7.3.25]{burago2001course}. Given two compact metric spaces $(X,\delta)$ and $(X',\delta')$, a \textit{correspondence} between $X$ and $X'$ is a subset $\mathcal{R} \subseteq X \times X'$ such that for every $x \in X$, there exists at least one $x' \in X'$ such that $(x,x')\in \mathcal{R}$, and conversely, for every $y' \in X'$, there exists at least one $y \in X$ such that $(y,y')\in \mathcal{R}$. The \textit{distortion} of this correspondence $\mathcal{R}$ is defined as
\begin{equation} \label{Distortion}
\textnormal{dis}(\mathcal{R}):=\sup\big\lbrace\left| \delta(x,y)-\delta'(x',y')\right|\colon(x,x'),(y,y') \in \mathcal{R}\big\rbrace.
\end{equation}
In our setting of two compact rooted $\mathbb{R}$-trees $\left(\mathcal{T},d,\rho\right)$ and $\left(\mathcal{T}',d',\rho'\right)$, we obtain
\begin{equation}
d_{\rm GH}\left(\left(\mathcal{T},d,\rho\right),\left(\mathcal{T}',d',\rho'\right)\right)= \frac{1}{2} \displaystyle\inf_{\mathcal{R}\in\mathcal{C}\left(\mathcal{T},\mathcal{T}'\right)}\text{dis}\left(\mathcal{R}\right), \label{distortion}
\end{equation} 
where $\mathcal{C}(\mathcal{T},\mathcal{T}')$ is the set of all correspondences $\mathcal{R}$ between $(\mathcal{T},d,\rho)$ and $(\mathcal{T}',d',\rho')$ which have $(\rho,\rho')$ in correspondence, i.e.\ for which $(\rho,\rho^\prime)\in\mathcal{R}$. %\pagebreak

We will want to specify a marked point on a compact rooted $\mathbb{R}$-tree. %We will only consider compact rooted $\mathbb{R}$-trees marked at a single point $x$. % and denote this space by $\mathcal T^{x} := \left(\mathcal{T},d,\rho,x\right)$. 
We refer the reader to \cite[Section~6.4]{miermont2007tessellations} for further extensions. Given two marked compact rooted $\mathbb{R}$-trees $\left(\mathcal{T},d,\rho,x\right)$ and $\left(\mathcal{T'},d',\rho',x'\right)$, the \textit{marked Gromov--Hausdorff distance} is defined as
\begin{equation*} %\label{GH marked metric}
d_{\rm GH}^{\rm m}\!\left(\!\left(\mathcal{T}\!,d,\rho,x\right)\!,\left(\mathcal{T'}\!,d'\!,\rho'\!,x'\right)\!\right) := \inf_{\phi,\phi'}\!\left(\delta_{\rm H}\!\left(\phi\!\left(\mathcal{T}\right)\!,\phi'\!\left(\mathcal{T}'\right)\right)\vee \delta \!\left(\phi\!\left(\rho\right)\!,\phi'\!\left(\rho'\right)\right)\vee \delta\!\left(\phi\!\left(x\right),\phi'\!(x')\right)\right),
\end{equation*} 
where the infimum is taken over all metric spaces $\left(X,\delta\right)$ and all isometric embeddings $\phi\colon\mathcal{T} \rightarrow X$ and $\phi'\colon\mathcal{T}' \rightarrow X$. We say two marked compact rooted $\mathbb{R}$-trees are ${\rm GH}^{\rm m}$-\textit{equivalent} if there exists an isometry $f \colon \mathcal{T} \rightarrow \mathcal{T}'$ such that $f(\rho) = \rho'$ and $f(x) = x'$. We denote the set of equivalence classes of marked compact rooted $\mathbb{R}$-trees by $\mathbb{T}_{\rm m}$. The marked Gromov--Hausdorff distance only depends on the ${\rm GH}^{\rm m}$-equivalence classes of $\left(\mathcal{T},d,\rho,x\right)$ and induces a metric on $\mathbb{T}_{\rm m}$, which we also denote by $d^{\rm m}_{\rm GH}$. 
In the spirit of \eqref{distortion}, we obtain for marked compact rooted $\mathbb{R}$-trees $(\mathcal{T},d,\rho,x)$ and $(\mathcal{T}',d',\rho',x')$, 
\begin{equation} \label{marked distortion}
d_{\rm GH}^{\rm m}\left(\left(\mathcal{T},d,\rho,x\right),\left(\mathcal{T}',d',\rho',x'\right)\right)= \frac{1}{2} \displaystyle\inf_{\mathcal{R}\in\mathcal{C}^{\rm m}\left(\mathcal{T},\mathcal{T}'\right)}\text{dis}\left(\mathcal{R}\right).\pagebreak
\end{equation}
where we denote by $\mathcal{C}^{\rm m}(\mathcal{T},\mathcal{T}')$ the set of all correspondences between $(\mathcal{T},d,\rho,x)$ and $(\mathcal{T}',d',\rho',x')$ which have $(\rho,\rho')$ and $(x,x')$ in correspondence; 
cf. \cite[Proposition~9(i)]{miermont2007tessellations}.

Suppose now that $(X,\delta)$ is a complete metric space. Then $\left(X,\delta,\mu\right)$ is a \textit{metric measure space} if $(X, \delta)$ is further equipped with a Borel probability measure $\mu$. % with respect to $\delta$ on $X$. 
We define a \textit{weighted} $\mathbb{R}$-tree as a compact rooted $\mathbb{R}$-tree $\left(\mathcal{T},d,\rho\right)$ equipped with a Borel probability measure $\mu$, which we refer to as \em mass measure\em. We will often write $\mathcal T$ for a weighted $\mathbb R$-tree, the distance, the root and the mass measure being implicit. 
For two weighted $\mathbb{R}$-trees $\left(\mathcal{T},d,\rho,\mu\right)$, $\left(\mathcal{T}',d',\rho',\mu'\right)$, the Gromov--Hausdorff--Prokhorov distance is defined as
\begin{equation*} %\label{GHP metric}
d_{\rm GHP}((\mathcal{T}\!,d,\rho,\mu),(\mathcal{T}'\!,d'\!,\rho'\!,\mu')):=\inf_{\phi,\phi'} \left( \delta_{\rm H}(\phi(\mathcal{T}),\phi'(\mathcal{T}'))\vee\delta(\phi(\rho),\phi'(\rho'))\vee\delta_{\rm P}(\phi_{*}\mu,\phi_{*}'\mu'))\right)\!,
\end{equation*}
where the infimum is taken over all metric spaces $\left(X,\delta\right)$ and all isometric embeddings $\phi\colon\mathcal{T} \rightarrow X$ and $\phi'\colon\mathcal{T}' \rightarrow X$, $\delta_{\rm P}$ denotes the Prokhorov-metric, and $\phi_{*}\mu$, $\phi_{*}'\mu'$ are the push-forwards of $\mu, \mu'$ under $\phi,\phi'$ respectively. 

Two weighted $\mathbb{R}$-trees $\left(\mathcal{T},d,\rho,\mu\right)$ and $\left(\mathcal{T}',d',\rho',\mu'\right)$ are considered \textit{{\rm GHP}-equivalent} if there is an isometry $f\colon\left(\mathcal{T},d,\rho,\mu\right) \rightarrow \left(\mathcal{T}',d',\rho',\mu'\right)$ such that $f(\rho)=\rho'$ and $\mu'$ is the push-forward of $\mu$ under $f$. Denote the set of equivalence classes of weighted $\mathbb{R}$-trees by $\mathbb{T}_{\rm w}$. The Gromov--Hausdorff--Prokhorov distance naturally induces a metric on $\mathbb{T}_{\rm w}$.

\begin{proposition} The spaces $\left(\mathbb{T},d_{\rm GH}\right)$, $\left(\mathbb{T}_{\rm m},d_{\rm GH}^{\rm m}\right)$ and $\left(\mathbb{T}_{\rm w},d_{\rm GHP}\right)$ are Polish.
\end{proposition}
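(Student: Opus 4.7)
The plan is to establish, for each of the three spaces, both separability and completeness. Since $(\mathbb{T},d_{\rm GH})$ is the base case and the other two are enrichments, I would treat $\mathbb{T}$ first and then bootstrap to $\mathbb{T}_{\rm m}$ and $\mathbb{T}_{\rm w}$.

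For separability of $\mathbb{T}$, I would exhibit a countable dense family consisting of finite combinatorial rooted trees with positive rational edge lengths, viewed as compact rooted $\mathbb{R}$-trees in the obvious way. Density follows because, given any $(\mathcal{T},d,\rho)\in\mathbb{T}$ and $\varepsilon>0$, one can pick a finite $\varepsilon$-net $\{\rho=a_0,a_1,\ldots,a_k\}$ of $\mathcal{T}$ and set $\mathcal{T}'=\bigcup_{i=0}^{k}\llbracket\rho,a_i\rrbracket$; this is a finite $\mathbb{R}$-tree within Gromov--Hausdorff distance $\varepsilon$ of $\mathcal{T}$, and perturbing its edge lengths to rationals produces an element of the countable family within $2\varepsilon$. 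For $\mathbb{T}_{\rm m}$, augment the family by all choices of a vertex of each rational tree as a marked point, and observe that the same construction works provided one adds the marked point $x$ to the $\varepsilon$-net. For $\mathbb{T}_{\rm w}$, equip each rational tree with an atomic measure supported on its finitely many vertices, with rational weights summing to one; the usual fact that any Borel probability measure on a compact metric space can be Prokhorov-approximated by atomic measures on an $\varepsilon$-net, combined with the tree approximation above, yields density.

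For completeness, given a Cauchy sequence $(\mathcal{T}_n)_{n\geq 1}$ in any of the three metrics, I would follow the standard ``iterated embedding'' strategy: pass to a rapidly converging subsequence with $d(\mathcal{T}_n,\mathcal{T}_{n+1})<2^{-n}$, and realise all $\mathcal{T}_n$ as isometric copies inside a single compact ambient space $(Z,\delta)$ built by gluing successive near-optimal $2^{-n}$-correspondences. The subsets $\phi_n(\mathcal{T}_n)\subset Z$ are then Cauchy in the Hausdorff metric and converge to a compact subset $\mathcal{T}_\infty\subset Z$, while the roots $\phi_n(\rho_n)$ converge to some $\rho_\infty\in\mathcal{T}_\infty$. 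The four-point condition characterising $\mathbb{R}$-trees passes to Hausdorff limits of compact metric spaces, so $\mathcal{T}_\infty$ is itself a compact rooted $\mathbb{R}$-tree, which supplies the limit in $\mathbb{T}$. For $\mathbb{T}_{\rm m}$, the marked points $\phi_n(x_n)$ form a Cauchy sequence in $Z$ by the marked Gromov--Hausdorff bound, producing a limit $x_\infty\in\mathcal{T}_\infty$. For $\mathbb{T}_{\rm w}$, the push-forwards $(\phi_n)_*\mu_n$ form a Cauchy and hence tight family of Borel probability measures on the compact space $Z$, converging weakly to a probability measure $\mu_\infty$ whose support lies in $\mathcal{T}_\infty$.

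The main obstacle is ensuring that the iterated embedding construction is rigorous and that the triple $(\mathcal{T}_\infty,\rho_\infty,\mu_\infty)$ (and, where applicable, $x_\infty$) is genuinely a GH/GHP-limit of the whole Cauchy sequence rather than just of the chosen subsequence; the latter is automatic from the Cauchy property once a subsequential limit is exhibited, and the former is by now a well-documented construction. In practice, I would attribute the pure $\mathbb{R}$-tree statement to Evans--Pitman--Winter, the weighted case to Abraham--Delmas--Hoscheit and Miermont, and observe that the marked case follows by the same embedding argument with the marked point treated in parallel with the root. No further technical innovation beyond these references is required.
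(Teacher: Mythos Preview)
Your proposal is correct and, in spirit, matches the paper's own proof, which simply cites Evans \cite[Theorem~4.23]{e08} for $(\mathbb{T},d_{\rm GH})$, Miermont \cite[Proposition~9(ii)]{miermont2007tessellations} for $(\mathbb{T}_{\rm m},d_{\rm GH}^{\rm m})$, and Abraham--Delmas--Hoscheit \cite[Theorem~2.7]{adh13} for $(\mathbb{T}_{\rm w},d_{\rm GHP})$. You go further by sketching the standard separability and iterated-embedding completeness arguments underlying those references, which is fine and accurate, though for the paper's purposes the bare citations suffice.
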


\begin{proof}
See, e.g., \cite[Theorem~4.23]{e08}, \cite[Proposition~9(ii)]{miermont2007tessellations} and \cite[Theorem~2.7]{adh13}.
\end{proof}

In \cite{a91a,a91b,a93}, Aldous originally built his theory of %a special class of weighted $\mathbb{R}$-trees with infinitesimally short edges, 
%known as 
\textit{continuum trees}, %under an embedding into the 
in $\ell_1(\mathbb{N})$. %, endowing compact subsets with the Hausdorff metric
%and probability measures with the topology of weak convergence (induced by the Prokhorov metric). 
Indeed, some of our arguments will benefit from
specific representatives in $\ell_1(\mathbb{U})$, where $\mathbb{U}$ is the countable set of integer words. In any case, 
\cite[Theorem~3]{a93} connects Aldous's $\ell_1(\mathbb{N})$ embedding and the above setup of weighted $\mathbb{R}$-trees. So, we make the 
following definition.

\begin{definition}[Continuum Random Tree] \label{Continuum Tree}\rm
A weighted $\mathbb{R}$-tree $\left(\mathcal{T},d,\rho,\mu\right)$ is a \textit{continuum tree} if the Borel probability measure $\mu$ satisfies the following properties.
\begin{enumerate}
\item[(i)] $\mu\left(\mathcal{L}\left(\mathcal{T}\right)\right) = 1$, that is, $\mu$ is supported by the leaves of $\mathcal{T}$.
\item[(ii)] $\mu$ is non-atomic, that is, if $ a \in \mathcal{L}\left(\mathcal{T}\right) $, then $ \mu\left(\lbrace a \rbrace \right) = 0$.
\item[(iii)] For every $ a \in \mathcal{T} \setminus \mathcal{L}\left(\mathcal{T}\right) $, we have $\mu\left(\mathcal{T}(a)\right) > 0$, where $\mathcal{T}(a) := \lbrace \sigma \in \mathcal{T}\colon a \in \llbracket \rho, \sigma \rrbracket \rbrace $ is the subtree above $a$ in $\mathcal{T}$.
\end{enumerate}
A \textit{Continuum Random Tree} (CRT) is a random variable valued in a space (of GHP-equiva\-lence classes) of continuum trees. 
\end{definition}
Note that conditions (i) and (ii) above imply that a continuum tree has uncountably many leaves. It is not obvious how to determine the distribution of a CRT simply by its definition. To do this, it is useful to have a notion of \textit{reduced trees}.

\begin{definition}[Reduced tree] \label{Reduced subtrees}\rm
Let $\left(\mathcal{T},d,\rho,\mu\right)$ be a CRT and $ m \geq 1$. A \textit{uniform sample} of $m$ points according to the measure $\mu$ is a vector $(V_1, \ldots, V_m)$ such that $V_i \sim \mu$, $i=1,\ldots,m$, are i.i.d..
The associated \textit{$m$-th reduced subtree} of $\left(\mathcal{T},d,\rho,\mu\right)$ is the subtree of $\mathcal{T}$ spanned by $V_1,\ldots,V_m$ and $\rho$, i.e.\ $\bigcup_{1\le j\le m}\llbracket\rho,V_j\rrbracket$. 
\end{definition}

The distribution of the $m$-th reduced subtree is fully specified by its \textit{tree shape} when regarded as a discrete, graph-theoretic, rooted 
tree with $m$ labelled leaves, and by its \textit{edge lengths}. The consistent system of $m$-th reduced subtree distributions, $m\ge 1$, 
may be regarded as a system of finite-dimensional distributions of a CRT \cite{ag15}. It is well-known that they 
determine the distribution of a CRT on $\bT_{\rm w}$.

We now turn to Marchal's algorithm which leads to the definition of a special class of continuum random trees, the $\alpha$-stable trees with parameter $\alpha \in (1,2]$.

\subsection{Marchal's algorithm and $\alpha$-stable trees} 
\label{Marchal subsection}

Marchal's random growth algorithm generalises R\'{e}my's algorithm \cite{remy1985procede}, and also relates to Marchal's earlier work on the Lukasiewicz correspondence of random trees to excursions of a simple random walk converging to a Brownian excursion \cite{marchal2003constructing}. We adapt the notation employed in Curien and Haas \cite{curien2013stable} in the following. 

\begin{definition}[Marchal's algorithm] \label{Marchal's algorithm}\rm
 Given a parameter $\alpha\in(1,2]$, we recursively construct a sequence $\left(\mathbf{T}_{\alpha}(n)\right)_{n \geq 1}$ valued in the set of leaf-labelled discrete trees, with $\mathbf{T}_{\alpha}(n)$ having $n$ leaves and a root, as follows. 
\begin{itemize}
\item[(I)] Initialise $\mathbf{T}_{\alpha}(1)$ as the unique tree with one edge and two labelled endpoints, $A_0$ and $A_1$. Regard $A_0$ as the root and $A_1$ as a marked leaf. 
\item[(II)] For $n \geq 1$, given $\mathbf{T}_{\alpha}(n)$, assign weight $\alpha - 1$ to each edge of $\mathbf{T}_{\alpha}(n)$,  weight $d - 1 -\alpha$ to each branch point of degree $d \geq 3$, and no weight to other vertices. Choose an edge or a branch point of $\mathbf{T}_{\alpha}(n)$ with probability proportional to its weight. 
\item[(III)] Distinguish two cases depending on the selection in (II).\vspace{-0.2cm}
\begin{itemize}
\item[(a)] If an edge was selected, split the chosen edge into two edges at its midpoint by a new middle vertex denoted by $V_{n+1}$. At $V_{n+1}$, attach a new edge carrying the $(n+1)$-st leaf, denoted by $A_{n+1}$.\vspace{0.1cm}
\item[(b)] If a branch point was selected, attach a new edge carrying the $(n+1)$-st leaf at the chosen vertex. Denote the new leaf by $A_{n+1}$.
\vspace{-0.1cm}
\end{itemize}
\item[(IV)] Repeat from (II) with $n \mapsto n + 1$.%\pagebreak
\end{itemize}
Set $\widetilde{I}:=\lbrace k\ge 2\colon V_k \textnormal{ is created} \rbrace$, and define the limiting set of vertices at time $\infty$ as
$$ \mathbf{T}_{\alpha}(\infty) := \bigcup_{n \geq 0} \lbrace A_n \rbrace \cup \bigcup_{k \in \widetilde{I}} \lbrace V_k \rbrace. $$
\end{definition}

Define the measure $W(\cdot)$ which assigns the total weight to sub-structures in Marchal's algorithm. It is easy to see that, regardless of tree shape, for all $n \geq 1$, the total weight of the tree is $W(\mathbf{T}_{\alpha}(n))=n\alpha-1$. The distribution of the shape of the trees constructed in Marchal's algorithm was given in \cite[Theorem~1]{marchal2008note}:

%We have the following result on the tree shapes in Marchal's algorithm from \cite[Theorem~1]{marchal2008note}. 

\begin{proposition} \label{Marchal tree shape prob}
Suppose $\mathbf{t}$ is a given leaf-labelled tree with $n$ leaves and a root, where $n \geq 2$, then the tree shape of $\mathbf{T}_{\alpha}(n)$ has distribution 
$$\mathbb{P}\left(\mathbf{T}_{\alpha}(n)=\mathbf{t}\right) = \frac{\prod_{v \in \mathbf{t}}p_{\textnormal{deg}(v)}}{\prod^{n-1}_{i=1}(i\alpha-1)},$$
where $p_1 = 1$, $p_2 = 0$, and $p_k=\left|\prod^{k-2}_{i=1}(\alpha-i)\right|$ for $k \geq 3$. 
\end{proposition}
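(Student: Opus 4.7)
My plan is to prove the proposition by induction on $n \geq 2$, exploiting the Markovian structure of Marchal's algorithm. For the base case $n = 2$, from $\mathbf{T}_\alpha(1) = A_0$--$A_1$ the unique edge (weight $\alpha - 1$) is selected deterministically and split, producing the unique tree with a single internal vertex of degree $3$; the formula gives $p_3/(\alpha-1) = (\alpha-1)/(\alpha-1) = 1$.

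For the inductive step, I would fix a leaf-labelled tree $\mathbf{t}'$ with $n+1$ leaves and rely on the fact that, because the new leaf $A_{n+1}$ carries a distinguished label, $\mathbf{t}'$ has a unique predecessor $\mathbf{t}$ and a unique last move under Marchal's algorithm. Let $x$ be the neighbour of $A_{n+1}$ in $\mathbf{t}'$ (this is never the root $A_0$, since $A_0$ stays of degree one throughout). If $\deg_{\mathbf{t}'}(x) = 3$, then $x = V_{n+1}$ was newly created by splitting an edge, so $\mathbf{t}$ is obtained from $\mathbf{t}'$ by removing $A_{n+1}$ with its incident edge and smoothing $x$; the transition probability equals $(\alpha-1)/(n\alpha-1)$. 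If $d := \deg_{\mathbf{t}'}(x) \geq 4$, then $x$ was a pre-existing branch point of degree $d-1 \geq 3$ in $\mathbf{t}$, and $A_{n+1}$ was attached directly to it with transition probability $(d-2-\alpha)/(n\alpha-1)$. The denominator uses the total-weight identity $W(\mathbf{T}_\alpha(n)) = n\alpha - 1$ stated in the excerpt, which I would verify by a short induction showing that each step of Marchal's algorithm adds exactly $\alpha$ to the total weight.

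It then remains to compare the numerator $\prod_{v \in \mathbf{t}'} p_{\deg(v)}$ with $\prod_{v \in \mathbf{t}} p_{\deg(v)}$. In the edge-split case, the passage from $\mathbf{t}$ to $\mathbf{t}'$ adds one degree-$3$ vertex and one degree-$1$ leaf, multiplying the product by $p_3 p_1 = \alpha - 1$, which precisely cancels the move weight. In the branch-attachment case, the extra leaf contributes $p_1 = 1$ while the degree of $x$ changes from $d-1$ to $d$, and for $d \geq 4$ and $\alpha \in (1,2]$ one has
$$\frac{p_d}{p_{d-1}} = |\alpha - (d-2)| = d - 2 - \alpha,$$
matching the move weight exactly. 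Combining with the inductive expression for $\mathbb{P}(\mathbf{T}_\alpha(n) = \mathbf{t})$ then produces the claimed expression for $\mathbf{t}'$, closing the induction.

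I do not anticipate a serious obstacle: the argument is essentially bookkeeping once the unique predecessor is identified. The only point requiring care is matching $p_d/p_{d-1}$ to the branch-point weight $d - 2 - \alpha$ (and $p_3 p_1$ to the edge weight $\alpha - 1$); recognising this built-in algebraic compatibility, for which Marchal's weights were precisely engineered, is the sole ``idea'' needed.
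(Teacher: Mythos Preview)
Your induction argument is correct and complete. The paper does not supply its own proof of this proposition: it simply quotes the result as \cite[Theorem~1]{marchal2008note}, so there is no in-paper proof to compare against. Your argument is the natural one and essentially reproduces Marchal's original reasoning --- identifying the unique predecessor of $\mathbf{t}'$ via the labelled leaf $A_{n+1}$, and then matching the transition weight $(\alpha-1)$ or $(d-2-\alpha)$ to the ratio $p_3 p_1$ or $p_d/p_{d-1}$, respectively. The only implicit point worth making explicit is that trees containing a degree-$2$ vertex are automatically handled by $p_2=0$, which is consistent with the fact that Marchal's algorithm never produces such vertices.
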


In the limit, a subtlety of Marchal's algorithm is that, almost surely, no two vertices chosen from $\mathbf{T}_{\alpha}(\infty)$ are adjacent. Suppose $u$ and $v$ are two vertices incident to edge $e$ at time $n_0$, then almost surely, we observe (countably) infinitely many branch points added into the path between the end vertices of $e$, as Marchal's algorithm progresses.

To turn the limiting object into an $\mathbb{R}$-tree, we take the natural \textit{completion} of $\mathbf{T}_{\alpha}(\infty)$ by `filling in-between' the countably many pairwise non-adjacent vertices. More precisely, between two chosen points $u,v \in \mathbf{T}_{\alpha}(n)$, the above entails that the graph distance between them tends to infinity as $n \rightarrow \infty$. By rescaling this distance appropriately and by identifying a suitable $L^2$-bounded martingale, invoking the Martingale Convergence Theorem, Marchal demonstrates the following limiting behaviour \cite[Theorem~2]{marchal2008note}.

\begin{proposition} \label{Marchal T_inf metric} For $\alpha\!\in\!(1,2]$, let $\beta\!:=\! 1\!-\! 1/\alpha\in(0,1/2]$. For all $u,v \in \mathbf{T}_{\alpha}(\infty)$, the limit $$ d(u,v) = \lim_{n \rightarrow \infty} {n^{-\beta}} d_n(u,v)$$ exists a.s., where $d_n$ is the graph distance on $\mathbf{T}_{\alpha}(n)$. Furthermore, the completion \linebreak $\left(\overline{\mathbf{T}_{\alpha}(\infty)},d\right)$ of $(\mathbf{T}_\alpha(\infty),d)$ is an $\mathbb{R}$-tree. 
\end{proposition}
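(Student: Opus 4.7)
The plan is to make precise the martingale argument alluded to in the statement. Fix $u,v\in\mathbf T_\alpha(\infty)$, choose $n_0$ so that both are present in $\mathbf T_\alpha(n_0)$, and put $D_n:=d_n(u,v)$ for $n\ge n_0$. From the weight assignment of step (II) of Definition \ref{Marchal's algorithm} and the total weight $W(\mathbf T_\alpha(n))=n\alpha-1$ recorded just after it, $D_n$ increases by $1$ precisely when one of its $D_n$ on-path edges is subdivided (on-path branch-point selections only attach pendants and off-path selections are obviously neutral), so
$$
\mathbb P\bigl(D_{n+1}=D_n+1\mid\mathcal F_n\bigr)=\frac{(\alpha-1)D_n}{n\alpha-1},\qquad\mathbb P\bigl(D_{n+1}=D_n\mid\mathcal F_n\bigr)=1-\frac{(\alpha-1)D_n}{n\alpha-1}.
$$
Setting $c_n:=\prod_{k=n_0}^{n-1}\bigl((k+1)\alpha-2\bigr)/(k\alpha-1)$ makes $M_n:=D_n/c_n$ a nonnegative $(\mathcal F_n)$-martingale, and a standard Gamma-function ratio gives $c_n\sim C_{n_0}\,n^{\beta}$ with $\beta=1-1/\alpha$.

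Next I would obtain $L^2$-boundedness. From the increment identity $D_{n+1}^2-D_n^2=(2D_n+1)\mathbf 1_{\{D_{n+1}>D_n\}}$ I read off
$$
\mathbb E[D_{n+1}^2\mid\mathcal F_n]=\Bigl(1+\tfrac{2(\alpha-1)}{n\alpha-1}\Bigr)D_n^2+\tfrac{\alpha-1}{n\alpha-1}D_n,
$$
which, together with $\mathbb E[D_n]=O(n^\beta)$ and a routine induction, yields $\mathbb E[D_n^2]=O(n^{2\beta})$, hence $\sup_n\mathbb E[M_n^2]<\infty$. The $L^2$ martingale convergence theorem then delivers an a.s.\ and $L^2$ limit $M_\infty$ satisfying $\mathbb E[M_\infty]=D_{n_0}/c_{n_0}>0$, and I define $d(u,v):=M_\infty/C_{n_0}$ so that $n^{-\beta}d_n(u,v)\to d(u,v)$ a.s. Because $\mathbf T_\alpha(\infty)$ is countable, a single almost sure event supports joint convergence for all pairs, and the symmetry, triangle inequality and four-point ($0$-hyperbolicity) condition on the finite graph-theoretic trees $\mathbf T_\alpha(n)$ all pass to the limit.

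The step I expect to be the real obstacle is the almost sure strict positivity $d(u,v)>0$ for $u\ne v$, which is required for $d$ to be a genuine metric: an $L^2$-bounded nonnegative martingale can vanish on a positive-probability set, so Martingale Convergence alone is not enough. My approach would be to exploit the two-colour generalised P\'olya urn embedded in the evolution, with ``weight on $\llbracket u,v\rrbracket$'' versus ``weight off $\llbracket u,v\rrbracket$'' as the two colours; the increments on each side are deterministic in the conditional draw (on-path selection adds $1$ to the on-path weight, off-path selection adds $\alpha$ to the off-path weight), so Janson-type functional urn limit theorems give almost sure convergence of the on-path proportion to a strictly positive random variable, forcing $M_\infty>0$ a.s. Once strict positivity is in hand, the metric completion $(\overline{\mathbf T_\alpha(\infty)},d)$ is complete and $0$-hyperbolic; the rescaled discrete geodesics joining $u$ and $v$ in $\mathbf T_\alpha(n)$ converge as well, supplying the arc-connectedness needed to identify the completion with an $\mathbb R$-tree in the sense of Definition \ref{R-tree}.
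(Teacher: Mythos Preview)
The paper does not prove this proposition itself: it is quoted as \cite[Theorem~2]{marchal2008note}, with only the one-line hint ``by rescaling this distance appropriately and by identifying a suitable $L^2$-bounded martingale, invoking the Martingale Convergence Theorem''. Your proposal is therefore not competing with a proof in the paper but reconstructing Marchal's argument, and it does so along exactly the lines the paper indicates. The martingale $M_n=D_n/c_n$, the Gamma-ratio asymptotics $c_n\sim C_{n_0}n^\beta$, and the $L^2$-boundedness via the second-moment recursion are all correct and match Marchal's approach.

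Two small points. First, your urn increments are slightly off: when an on-path edge is selected the on-path \emph{weight} $(\alpha-1)D_n$ increases by $\alpha-1$ (not by $1$) and the complementary weight increases by $1$, giving the triangular replacement matrix $\bigl(\begin{smallmatrix}\alpha-1 & 1\\ 0 & \alpha\end{smallmatrix}\bigr)$ with eigenvalue ratio $(\alpha-1)/\alpha=\beta$; this is the correct input to Janson's framework and does yield $M_\infty>0$ a.s., so your strategy for strict positivity is sound once the numbers are fixed. Second, your final step is a touch informal: the cleanest route is that the four-point condition passes to limits and to completions, and then one checks that the completion is path-connected (your ``rescaled discrete geodesics'' do this, since the intermediate on-path vertices form an increasing dense set in $\llbracket u,v\rrbracket$ whose pairwise $d$-distances add up to $d(u,v)$); a complete, connected, $0$-hyperbolic metric space is an $\mathbb{R}$-tree. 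With these corrections your argument is a faithful expansion of the proof the paper cites.
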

We may regard $\overline{\mathbf{T}_{\alpha}(\infty)}$ as the \textit{scaling limit} of Marchal's algorithm as an $\mathbb{R}$-tree. Combining these observations, if $\left(T_{\alpha}(n)\right)_{n \geq 1}$ is an $\bR$-tree representation of $\left(\mathbf{T}_{\alpha}(n)\right)_{n \geq 1}$, then the limit
\begin{equation} \label{Marchal algorithm convergence}
\frac{T_{\alpha}(n)}{\alpha n^{\beta}} \rightarrow \mathcal{T}_{\alpha} \quad \text{ as } n \rightarrow \infty
\end{equation}
holds as a convergence of finite-dimensional distributions of reduced subtrees for some random $\mathbb R$-tree $\mathcal T_\alpha$. \cite[Corollary~24]{haas2008continuum} checks that $\mathcal{T}_{\alpha}$ may be constructed on the same probability space supporting $\left(\mathbf{T}_{\alpha}(n)\right)_{n \geq 1}$ with \eqref{Marchal algorithm convergence} holding in probability in the Gromov--Hausdorff sense. We state an improved result by Curien and Haas \cite[Theorem~5(iii)]{curien2013stable}. 
\begin{proposition} \label{Marchal GHP Curien and Haas Theorem}
 Let $\mu_n$ denote the empirical mass measure on the leaves of $T_{\alpha}(n)$, let $d_n$ be the graph distance on $T_{\alpha}(n)$, and let
 $\rho_n$ be the root. Then
 $$ \left(T_{\alpha}(n),\frac{d_n}{\alpha n^{\beta}},\rho_n,\mu_n\right) \overset{a.s.}{\longrightarrow} 
    \left(\mathcal{T}_{\alpha},d_{\alpha},\rho_\alpha,\mu_{\alpha}\right) \quad \text{ as } n \rightarrow \infty,$$
 in the Gromov--Hausdorff--Prokhorov topology, for some CRT $(\mathcal T_\alpha, d_\alpha, \rho_\alpha, \mu_\alpha)$. 
\end{proposition}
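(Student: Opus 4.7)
The plan is to upgrade the convergence in probability in the Gromov--Hausdorff sense, already available via \eqref{Marchal algorithm convergence} and \cite[Corollary~24]{haas2008continuum}, to almost sure convergence in the finer Gromov--Hausdorff--Prokhorov topology. Marchal's recursive construction already places all $T_\alpha(n)$ on a single probability space, and after rescaling by $\alpha n^\beta$ the trees embed (in the limiting sense) into the completion $\overline{\mathbf{T}_\alpha(\infty)}$, which we identify with $\cT_\alpha$. Under this identification the leaves $A_1, A_2, \ldots$ become i.i.d.\ samples $V_1, V_2, \ldots$ from $\mu_\alpha$.

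First I would construct, for each $n$, an explicit correspondence $\cR_n \in \cC(T_\alpha(n), \cT_\alpha)$. The natural choice pairs each leaf $A_i$ with $V_i$, extends to the internal branch points created in step (III)(a) of Marchal's algorithm by mapping them to the corresponding branch points of $\cT_\alpha$, and then maps every remaining point of $\cT_\alpha$ to its nearest neighbour in $T_\alpha(n)$. Second, I would bound $\mathrm{dis}(\cR_n)$ by two quantities: (i) the maximum discrepancy $|n^{-\beta} d_n(u,v)/\alpha - d_\alpha(u,v)|$ over pairs of marked vertices, which tends to $0$ a.s.\ by Proposition \ref{Marchal T_inf metric}; and (ii) twice the Hausdorff distance between the $n$-th reduced subtree $\bigcup_{j=1}^n \llbracket \rho_\alpha, V_j \rrbracket$ and $\cT_\alpha$, which vanishes a.s.\ because $\{V_j\}_{j\ge 1}$ is a.s.\ dense in $\cT_\alpha$ (as $\mu_\alpha(\cL(\cT_\alpha)) = 1$, the support of $\mu_\alpha$ is dense in $\cT_\alpha$, and $\cT_\alpha$ is compact). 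For the measure part, the empirical measure $\mu_n = \frac{1}{n}\sum_{i=1}^n \delta_{A_i}$ pushes under $\cR_n$ to $\frac{1}{n}\sum_{i=1}^n \delta_{V_i}$, which converges weakly to $\mu_\alpha$ a.s.\ by the Glivenko--Cantelli / Varadarajan theorem for empirical measures; together with (i) and (ii) this controls the Prokhorov component of the GHP distance.

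The main obstacle is establishing the uniformity required in (i) and the Hausdorff control in (ii). The pointwise $L^2$-martingale convergence of Proposition \ref{Marchal T_inf metric} is not immediately uniform over all pairs of vertices, and subtrees grafted at late times carry small but non-zero mass that must be shown to remain close (in the rescaled graph metric) to its graft point. Concretely one needs to bound, uniformly in $n$, the maximal height of any subtree of $T_\alpha(n)$ whose mass is small. The main tool is Marchal's explicit tree-shape distribution from Proposition \ref{Marchal tree shape prob}, applied to the P\'olya-type weights of edges and branch points, combined with a moment/chaining estimate based on the $L^2$-bounded martingale underlying Proposition \ref{Marchal T_inf metric}; this is the technical core of \cite[Theorem~5(iii)]{curien2013stable}. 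Once these uniform bounds are in place, the explicit correspondence $\cR_n$ controls all three components of $d_{\rm GHP}$ simultaneously via the characterisation \eqref{distortion}, yielding the claimed a.s.\ GHP convergence to $(\cT_\alpha, d_\alpha, \rho_\alpha, \mu_\alpha)$.
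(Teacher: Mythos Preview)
The paper does not prove this proposition: it is quoted verbatim as \cite[Theorem~5(iii)]{curien2013stable} and no argument is given in the text. Your sketch is a plausible outline of how Curien and Haas actually proceed --- and you correctly flag that the uniform height/distortion control is the technical heart of their proof --- but there is nothing to compare against here, since the authors simply import the result from the literature.
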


\begin{definition}[$\alpha$-stable tree]\rm We call $(\mathcal T_\alpha, d_\alpha, \rho_\alpha, \mu_\alpha)$ the \textit{$\alpha$-stable tree}, $\alpha \in (1,2]$.
\end{definition}

It is often useful to parametrize the $\alpha$-stable tree by an index $\beta:=1-1/\alpha \in (0,1/2]$, as in Proposition \ref{Marchal T_inf metric}. 
We often rescale trees: distances by $c^\beta$ and masses by $c$, as in 
$$\left(\mathcal T_\alpha, c^\beta d_\alpha, \rho_\alpha, c \mu_\alpha\right).$$

When $\alpha=2$, no weight is ever given to a vertex of $\mathbf{T}_{2}(n)$, $n\geq 1$, in the second step of Marchal's algorithm. In the scaling limit, this coheres with the fact that $\mathcal{T}_2$ is binary a.s.. 

Note that the tree $(\mathcal T_\alpha,d_\alpha,\rho_\alpha,\mu_\alpha)$ induces a distribution $\varsigma_\alpha$ on $\mathbb{T}_{\rm w}$. We call 
the distribution $\varsigma_\alpha$ the \textit{law of the $\alpha$-stable tree}. Similarly, we will consider the distribution 
$\varsigma_\alpha^{\rm m}$ of $(\mathcal T_\alpha, d_\alpha, \rho_\alpha, x_\alpha)$ on $\mathbb{T}_{\rm m}$ when $x_\alpha \sim \mu_\alpha$ is a 
marked element of $\mathcal T_\alpha$ sampled from $\mu_\alpha$, which we call the \textit{law of the marked $\alpha$-stable tree}.

At this juncture, it is instructive to introduce further developments in analogous constructions of $\alpha$-stable trees, and more general trees, based on Marchal's algorithm. 

Marchal's algorithm is a special case of Chen, Ford and Winkel's \textit{alpha-gamma model} \cite{chenfordwinkel2009new}. The alpha-gamma model allows further discrimination between edges adjacent to a leaf (external edges) and the remaining internal edges. 

The distribution of the sequence of tree shapes obtained in the line-breaking construction of the stable tree introduced by Goldschmidt and Haas is the same as that obtained by Marchal's algorithm \cite[Proposition~3.7]{goldschmidt2015line}. However, Goldschmidt and Haas' constructions focus on distributions of \textit{edge lengths} rather than \textit{mass} in an $\alpha$-stable tree. 

Recently, Rembart and Winkel introduced a \textit{two-colour line-breaking construction} \cite[Algorithm~1.3]{rembart2016binary} unifying aspects of the alpha-gamma model, and Goldschmidt and Haas' line-breaking construction. It ascribes a notion of length to the weights at branch points of Goldschmidt and Haas' line-breaking algorithm by growing trees at these branch points.

Little emphasis has been placed on the recursive nature of Marchal's algorithm \textit{per se}. In \cite{curien2013stable}, Curien and Haas exploit this property to demonstrate a pruning procedure to obtain a rescaled $\alpha'$-stable tree from an $\alpha$-stable tree, where $1<\alpha<\alpha'\leq2$. They identified sub-constructions within Marchal's algorithm with parameter $\alpha$ that evolve as a time-changed Marchal algorithm with parameter $\alpha'$. We use a similar approach in Section \ref{Main result subsection} to find a recursive
distribution equation where the law of the $\alpha$-stable tree is a solution.

\subsection{P\'olya urns and Chinese restaurant processes} \label{GPU and CRP subsection}

We briefly recap the concepts of P\'olya urns and Chinese restaurant processes. 

Given $\beta > 0$ and $\theta > -\beta$, a random variable $L$ valued in $\left[0,\infty\right)$ has a \textit{generalised Mittag--Leffler distribution} with parameters $\left(\beta,\theta\right)$, denoted by $L \sim \text{ML}\left(\beta,\theta\right)$, if it has $p$-th moment\vspace{-0.1cm}
\begin{equation} \label{Mittag--Leffler distribution}
\mathbb{E}\left[L^p\right]=\frac{\Gamma(\theta+1)\Gamma\left(\theta/\beta+1+p\right)}{\Gamma(\theta/\beta+1)\Gamma\left(\theta+\beta p+1\right)}, \qquad p \geq 1.\vspace{-0.1cm}
\end{equation}

The Mittag--Leffler distribution is uniquely characterised by the moments \eqref{Mittag--Leffler distribution}, see e.g.\ \cite{pitman2006combinatorial}. 
It was shown in \cite[Lemma~11]{addario2016inverting} that $\alpha$ times the distance between two uniformly sampled points on an $\alpha$-stable tree has a $\textnormal{ML}(\beta, \beta)$ distribution, where $\beta = 1-1/\alpha$. As the $\alpha$-stable tree remains invariant under uniform re-rooting \cite[Theorem~11]{haas2009spinal}, this is the distribution of $\alpha$ times the distance between the root and a uniformly sampled point.

To analyse Marchal's algorithm, we will also use the following well-known \textit{aggregation property} of the Dirichlet distribution. 
\begin{proposition} \label{Dirichlet distn properties prop}
For $n\geq 2$, let $\beta_1,\ldots,\beta_n > 0$ and $Y := (Y_1,\ldots,Y_n) \sim \textnormal{Dir}\left(\beta_1,\ldots,\beta_n\right)$. Let $1 \leq m \leq n-1$. Then $Y':=\left(\sum_{i=1}^{m}Y_i,Y_{m+1},\ldots,Y_n\right)\sim \textnormal{Dir}\left(\sum_{i=1}^{m}\beta_i,\beta_{m+1},\ldots,\beta_n\right)$.
\end{proposition}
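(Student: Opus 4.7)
The plan is to prove this classical aggregation identity via the well-known gamma-ratio representation of the Dirichlet distribution rather than through a direct density computation, since the former makes the additivity of parameters transparent and avoids changes of variables on the simplex.

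First, I would recall (and, if required, verify in one line using the joint density of independent gammas) the representation: if $X_1,\ldots,X_n$ are independent with $X_i\sim\textnormal{Gamma}(\beta_i,1)$ and $S:=X_1+\cdots+X_n$, then
\begin{equation*}
(X_1/S,\ldots,X_n/S)\sim\textnormal{Dir}(\beta_1,\ldots,\beta_n),
\end{equation*}
with the vector of ratios being independent of $S$. By this representation we may realise our $Y=(Y_1,\ldots,Y_n)$ in the form $Y_i=X_i/S$.

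Second, using the standard convolution property of independent gamma variables with the same rate parameter, the partial sum $X':=X_1+\cdots+X_m$ satisfies $X'\sim\textnormal{Gamma}\bigl(\sum_{i=1}^m\beta_i,1\bigr)$, and by independence of the $X_i$'s, $X'$ is independent of $X_{m+1},\ldots,X_n$. Thus $(X',X_{m+1},\ldots,X_n)$ is a vector of independent gammas with parameters $\bigl(\sum_{i=1}^m\beta_i,\beta_{m+1},\ldots,\beta_n\bigr)$, whose total sum equals the original $S$. Applying the gamma-ratio representation in reverse gives
\begin{equation*}
\Bigl(\tfrac{X'}{S},\tfrac{X_{m+1}}{S},\ldots,\tfrac{X_n}{S}\Bigr)\sim\textnormal{Dir}\Bigl(\textstyle\sum_{i=1}^m\beta_i,\beta_{m+1},\ldots,\beta_n\Bigr),
\end{equation*}
and the left-hand side is precisely $Y'$ by construction.

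There is no real obstacle here: the only subtle point is the mild book-keeping that the ratio vector is independent of $S$, which is needed to guarantee that regrouping the numerators truly yields a Dirichlet vector rather than just a vector with Dirichlet marginals. An alternative route would be to integrate the Dirichlet density over the slice $\{y_1+\cdots+y_m=s\}$ and use the beta integral to recover the correct normalising constant $\Gamma(\sum_{i=1}^m\beta_i)$ in the denominator, but this is strictly more computation and less conceptual than the gamma construction above.
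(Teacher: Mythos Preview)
Your argument via the gamma-ratio representation is correct and is the standard textbook proof of this aggregation identity. The paper itself does not supply a proof: it simply states the proposition as the ``well-known aggregation property of the Dirichlet distribution'' and uses it later in the analysis of Marchal's algorithm. One minor remark: the independence of the ratio vector from $S$ that you flag as the ``subtle point'' is in fact not needed for the argument to go through --- once you know $(X',X_{m+1},\ldots,X_n)$ is a vector of independent gammas summing to $S$, the representation applies directly and yields the full joint Dirichlet law, not merely Dirichlet marginals.
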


Dirichlet and Mittag--Leffler distributions arise naturally in a variety of urn models, see \cite{pitman2006combinatorial} and \cite{janson2006limit} respectively. For our purposes, we restrict attention to the following specification of P\'{o}lya's urn model. 

\begin{definition}[Generalised P\'olya urn] \label{Generalised Polya urn}\rm

Given $K\!\geq\!2$ and $\vec{\gamma} = (\gamma_1,\gamma_2,\ldots,\gamma_K)$ with $\gamma_1,\gamma_2,\ldots,\gamma_K > 0$, consider a P\'{o}lya urn scheme with $K$ colours, initialisation $\vec{\gamma}$ and step-size $t>0$ evolving in discrete time. Represent the $K$ colours by the set $C := \lbrace 1,2,\ldots, K\rbrace $.  We say that a random variable $X$ valued in $C$ has distribution $\kappa_{\vec{\gamma}}$  if $\mathbb{P}\left(X=j\right)={\gamma_j}({\sum^{K}_{i=1} \gamma_i})^{-1}$ for all $j \in C$. Generate a sequence of draws $\left(X_1,X_2,\ldots\right)$ from $C$ according to the following scheme:\vspace{-0.1cm}
\begin{itemize}
\item[(I)] Set $\vec{\gamma}_1 := \vec{\gamma}$, sample $X_1$ from $\kappa_{\vec{\gamma}_1}$.\vspace{-0.1cm} 
\item[(II)] For $ n \geq 1$, set $\vec{\gamma}_{n+1} := \vec{\gamma}_{n} + t\vec{e}_{X_n}$, where $\vec{e}_j$ denotes the $j$-th standard Euclidean basis vector of $\mathbb{R}^K$. Given $X_1, \ldots, X_n$, sample $X_{n+1}$ from $\kappa_{\vec{\gamma}_{n+1}}$.%\pagebreak 
\end{itemize}
For $j \in C$, denote the number of $j$-th coloured balls observed after $n$ draws by $$D^{(n)}_j := \displaystyle\sum^n_{i=1}\mathbf{1}( X_i=j),$$ and define the vector of relative frequencies of colours observed in the first $n$ draws as
$$ \left(P^{(n)}_1,P^{(n)}_2,\ldots,P^{(n)}_K \right) := \left(\frac{D^{(n)}_1}{n},\frac{D^{(n)}_2}{n},\ldots,\frac{D^{(n)}_K}{n}\right).$$
\end{definition} 
The relative frequencies of colours observed are known to converge to an almost sure limit, due to Blackwell and MacQueen \cite{blackwell1973ferguson}. 
\begin{proposition} \label{Polya Urn Theorem}
Given a sequence of draws $\left(X_1,X_2,\ldots\right)$ from the urn scheme in Definition \ref{Generalised Polya urn}, the relative frequencies in the draws satisfy  
\[ \left(P^{(n)}_1,P^{(n)}_2,\ldots,P^{(n)}_K \right) \overset{a.s.}{\longrightarrow} \left(P_1,P_2,\ldots,P_K\right) \quad \text{ as } n \rightarrow \infty, \]
where $\left(P_1,P_2,\ldots,P_K\right) \sim \textnormal{Dir}\left({\gamma_1}/{\alpha},{\gamma_2}/{\alpha},\ldots,{\gamma_K}/{\alpha}\right)$. 
Consequently, the proportions of colours in the urn satisfy  
\[ \left(\frac{\gamma_1+t D^{(n)}_1}{\sum_{i=1}^{K}\gamma_i+t n},\frac{\gamma_2+t D^{(n)}_2}{\sum_{i=1}^{K}\gamma_i+t n},\ldots,\frac{\gamma_K+t D^{(n)}_K}{\sum_{i=1}^{K}\gamma_i+t n} \right) \overset{a.s.}{\longrightarrow} \left(P_1,P_2,\ldots,P_K\right) \quad \text{ as } n \rightarrow \infty. \]
\end{proposition}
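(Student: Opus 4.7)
The plan is the classical martingale-plus-moments argument for P\'olya urns. First, for each colour $j \in C$, I would observe that the urn proportion
$$M_n^{(j)} := \frac{\gamma_j + tD_j^{(n)}}{\sum_{i=1}^{K}\gamma_i + tn}$$
is a $[0,1]$-valued martingale with respect to the natural filtration $\mathcal{F}_n := \sigma(X_1, \ldots, X_n)$, since conditional on $\mathcal{F}_n$ one has $\mathbb{P}(X_{n+1} = j \mid \mathcal{F}_n) = M_n^{(j)}$, so that $\mathbb{E}[M_{n+1}^{(j)} \mid \mathcal{F}_n] = M_n^{(j)}$ by direct computation. Bounded martingale convergence then yields $M_n^{(j)} \overset{\mathrm{a.s.}}{\longrightarrow} P_j$ for every $j$, jointly, with $\sum_j P_j = 1$ almost surely. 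Elementary algebra shows $M_n^{(j)} - P_j^{(n)} \longrightarrow 0$ a.s., so the vector of empirical frequencies shares this limit. Hence both stated convergences reduce to identifying the joint law of $(P_1, \ldots, P_K)$.

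To identify that law, I would compute its joint moments via exchangeability. For any $(m_1, \ldots, m_K) \in \mathbb{Z}_{\geq 0}^K$ with $M := \sum_j m_j$ and any ordered sequence $(c_1, \ldots, c_M)$ with colour counts $(m_1, \ldots, m_K)$, the sampling mechanism gives
$$\mathbb{P}(X_1 = c_1, \ldots, X_M = c_M) = \frac{\prod_{j=1}^{K}\prod_{i=0}^{m_j - 1}(\gamma_j + it)}{\prod_{\ell=0}^{M-1}\bigl(\sum_{i=1}^{K}\gamma_i + \ell t\bigr)},$$
independent of the ordering, so $(X_n)_{n\geq 1}$ is exchangeable. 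By de Finetti's theorem, applied to this exchangeable $C$-valued sequence whose empirical frequencies converge to $(P_1, \ldots, P_K)$, the directing measure is $(P_1, \ldots, P_K)$, and so the left-hand side equals $\mathbb{E}\bigl[\prod_j P_j^{m_j}\bigr]$. Rewriting the rising factorials via the Gamma function identifies these as
$$\mathbb{E}\!\left[\prod_{j=1}^K P_j^{m_j}\right] = \frac{\Gamma\bigl(\sum_i \gamma_i/t\bigr)}{\Gamma\bigl(\sum_i \gamma_i/t + M\bigr)}\prod_{j=1}^K \frac{\Gamma(\gamma_j/t + m_j)}{\Gamma(\gamma_j/t)},$$
which are precisely the moments of $\textnormal{Dir}(\gamma_1/t, \ldots, \gamma_K/t)$. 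Since the Dirichlet distribution is compactly supported on the simplex, it is moment-determined, which completes the identification.

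The main (and only mildly delicate) obstacle is cleanly justifying the passage from the explicit sampling probability to the moment formula. An alternative that bypasses de Finetti is a direct inductive argument: conditional on the first $M$ draws realising colour counts $(m_1, \ldots, m_K)$, the process restarts as a fresh urn with composition $(\gamma_j + m_j t)_{j}$ whose proportions still converge almost surely to $(P_1, \ldots, P_K)$; writing the probability of the conditioning event two ways yields the moment formula directly. Either route is routine, and the result itself is classical, going back to Blackwell and MacQueen \cite{blackwell1973ferguson}.
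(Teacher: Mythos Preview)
The paper does not supply its own proof of this proposition; it is stated as a classical result and simply attributed to Blackwell and MacQueen \cite{blackwell1973ferguson}. Your argument is correct and follows one of the standard routes: bounded martingale convergence for the existence of the a.s.\ limit, exchangeability of the draw sequence, and then identification of the directing measure via its mixed moments, which match those of the Dirichlet distribution. The alternative inductive computation you sketch at the end also works and is essentially equivalent.

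One small remark: the printed statement has parameters $\gamma_i/\alpha$, whereas the urn in Definition \ref{Generalised Polya urn} has step-size $t$, and your computation correctly produces $\gamma_i/t$. In the paper's sole application (Proposition \ref{Marchal mass distribution prop}) the step-size is taken to be $t=\alpha$, so this is almost certainly a typographical slip in the general statement rather than any defect in your proof.
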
 
A natural extension to the urn scheme introduced in Definition \ref{Generalised Polya urn} is the two-parameter \textit{Chinese restaurant process} (CRP), see Pitman \cite{pitman1996some}. 

\begin{definition}[Chinese restaurant process] \label{CRP Defn}\rm
Given $\beta\in[0,1]$ and $\theta>-\beta$, the two-parameter \textit{Chinese restaurant process} with a $\left(\beta,\theta\right)$ seating plan, denoted by $\textnormal{CRP}\left(\beta,\theta\right)$, proceeds as follows. Label customers by $n \geq 1$. Seat customer 1 at the first table. For $n \geq 1$, let $K_n$ denote the number of tables occupied after customer $n$ has been seated and let $N_j(n)$ denote the number of customers seated at the $j$-th table for $j \in \lbrace 1,\ldots,K_n \rbrace $. At the next arrival, conditional on $\left(N_1(n),\ldots,N_j(n)\right)$, customer $n + 1$
\begin{itemize}
\item sits at the $j$-th table with probability $\left(N_j(n)-\beta\right)/\left(n+\theta\right)$ for $j \in \lbrace 1,\ldots,K_n\rbrace $,
\item opens the $\left(K_n+1\right)$-st table with the complementary probability $\left(\theta+K_n\beta\right)/\left(n+\theta\right)$. %\pagebreak
\end{itemize}
For each $n \geq 1$, the process at step $n$ induces a partition  $\Pi_n := \left(\Pi_{n,1},\ldots,\Pi_{n,K_n}\right)$ of $\lbrace 1,\ldots,n \rbrace$ into blocks, given by the collection of customer labels at each occupied table, with blocks ordered by least labels. This induces a partition-valued process $\left(\Pi_n, n\geq1\right)$.
\end{definition}

As with P\'{o}lya urn schemes, the CRP also satisfies limit theorems associated with the Dirichlet and Mittag--Leffler distributions, cf. \cite[Theorem~3.2 and Theorem~3.8]{pitman2006combinatorial}.  

\begin{proposition} \label{CRP Limit Theorems}
Consider a Chinese restaurant process with parameters $\beta \in (0,1)$ and $\theta > -\beta$. Then the number of tables $K_n$ at time $n$ satisfies
$$ n^{-\beta}{K_n} \overset{a.s.}{\longrightarrow} K_{\infty} \quad \text{ as } n \rightarrow \infty,$$
where $K_{\infty} \sim \textnormal{ML}(\beta,\theta)$. 
Furthermore, relative table sizes have almost sure limits
$$ \left(\frac{N_1(n)}{n},\frac{N_2(n)}{n},\ldots,\frac{N_{K_n}(n)}{n},0,0,\ldots\right) \overset{a.s.}{\longrightarrow} \left(W_1,\overline{W}_1 W_2,\overline{W}_1\overline{W}_{2}W_{3},\ldots\right) \quad \text{ as } n \rightarrow \infty,$$
where $W_j \sim \textnormal{Beta}\left(1-\beta,\theta+ j\beta\right)$, $j\ge 1$, are independent and $\overline{W}_j := 1-W_j$ for all $j \geq 1 $. 
\end{proposition}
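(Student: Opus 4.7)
My plan is to prove the two statements via standard machinery: for (i), a martingale argument coupled with the Martingale Convergence Theorem and a moment computation; for (ii), a P\'olya urn observation for the first table combined with a self-similarity argument that reduces the remaining process to a CRP with shifted second parameter, followed by induction.

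For (i), observe that $K_{n+1} = K_n + \mathbf{1}\{\text{new table opened at step } n+1\}$, where the indicator has conditional mean $(\theta+\beta K_n)/(n+\theta)$ given $\mathcal{F}_n := \sigma(\Pi_1,\ldots,\Pi_n)$. A short computation then yields
\begin{equation*}
\mathbb{E}[K_{n+1} + \theta/\beta \mid \mathcal{F}_n] = \frac{n+\theta+\beta}{n+\theta}\left(K_n + \theta/\beta\right),
\end{equation*}
so with $a_n := \prod_{k=1}^{n-1}(k+\theta+\beta)/(k+\theta)$ the process $M_n := (K_n+\theta/\beta)/a_n$ is a non-negative martingale, and hence $M_n \to M_\infty$ almost surely. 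Stirling's formula gives $a_n \sim (\Gamma(1+\theta)/\Gamma(1+\theta+\beta))\, n^\beta$, so $K_n/n^\beta \to K_\infty$ a.s. To identify $K_\infty \sim \textnormal{ML}(\beta,\theta)$, I would introduce the rising-factorial processes $Z_n^{(p)} := \prod_{j=0}^{p-1}(K_n+j+\theta/\beta)$, verify by a similar computation that $Z_n^{(p)}/b_n^{(p)}$, with $b_n^{(p)} := \Gamma(n+\theta+p\beta)\Gamma(1+\theta)/(\Gamma(n+\theta)\Gamma(1+\theta+p\beta))$, is a martingale, and bootstrap $L^2$-boundedness from the $(p+1)$-st martingale to justify passing to the limit in moments. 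Matching the resulting limit with \eqref{Mittag--Leffler distribution} identifies $K_\infty \sim \textnormal{ML}(\beta,\theta)$.

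For (ii), observe that the dichotomy ``seated at table 1 vs.\ elsewhere'' gives a two-colour P\'olya urn with initial weights $(1-\beta, \theta+\beta)$ and step size $1$, so Proposition~\ref{Polya Urn Theorem} yields $N_1(n)/n \to W_1 \sim \textnormal{Beta}(1-\beta, \theta+\beta)$ a.s. A direct check of the transition probabilities then shows that, after relabelling in order of arrival, the subsequence of customers not seated at table 1 evolves as a $\textnormal{CRP}(\beta,\theta+\beta)$: the numerators of the conditional table/new-table probabilities are unchanged, while the denominator reduces from $n+\theta$ to $(n-N_1(n))+\theta+\beta$, which is precisely what the CRP$(\beta,\theta+\beta)$ dynamics prescribe. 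Iterating this reduction recursively over successively opened tables (with parameter updated $\theta \mapsto \theta+\beta$ at each stage) and applying the first-table analysis at every level yields, by induction, the stick-breaking representation with $W_{j+1} \sim \textnormal{Beta}(1-\beta, \theta+(j+1)\beta)$.

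The main obstacle is twofold. In (i), establishing the uniform integrability (in fact $L^p$-convergence) of the martingales used to identify the moments is not automatic; I would handle this by the bootstrap just described, using the $(p+1)$-st martingale to bound $\mathbb{E}[(Z_n^{(p)}/b_n^{(p)})^2]$. In (ii), the independence of the $W_j$ requires a careful conditional-independence argument: at each stage of the recursion, the restricted CRP dynamics must be shown to be measurable with respect to a $\sigma$-algebra independent of the current $W_j$, which one sees from the fact that the conditional transition probabilities of the restricted process depend only on the inner partition structure and not on the total count in the outer tables.
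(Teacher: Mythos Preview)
The paper does not give its own proof of this proposition; it merely cites \cite[Theorem~3.2 and Theorem~3.8]{pitman2006combinatorial} as the source. So there is no in-paper argument to compare against, and your sketch is effectively a reconstruction of the standard proof found in that reference.

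That said, your outline is essentially the classical argument. For (i), the martingale $(K_n+\theta/\beta)/a_n$ and the higher rising-factorial martingales are exactly those used in Pitman's treatment, and the moment identification with \eqref{Mittag--Leffler distribution} goes through as you describe once the $L^2$-bootstrap is carried out. For (ii), the two-colour urn reduction for table~1 together with the observation that the restricted process is a ${\rm CRP}(\beta,\theta+\beta)$ is again the standard route to the GEM stick-breaking. One small point worth tightening: the independence of the $W_j$ follows most cleanly not from a bespoke conditional-independence argument but from the de~Finetti/exchangeability structure---conditionally on the urn limit $W_1$, the restricted process is genuinely a ${\rm CRP}(\beta,\theta+\beta)$ with law not depending on the value of $W_1$, which is what gives independence at each recursive step. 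With that clarification your sketch is sound and matches the cited source.
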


The distribution of the vector $(P_1,P_2,P_3,\ldots):=\left(W_1,\overline{W}_1 W_2,\overline{W}_1\overline{W}_{2}W_{3},\ldots\right)$ as defined in Proposition \ref{CRP Limit Theorems} is a \textit{Griffiths--Engen--McCloskey distribution} with parameters $(\beta,\theta)$, denoted by $\textnormal{GEM}(\beta,\theta)$. Ordering $(P_i, i \geq 1)$ in decreasing order yields a Poisson--Dirichlet distribution with parameters $(\beta,\theta)$, for short ${\rm PD}(\beta, \theta)$, i.e. $$\left(P^{\downarrow}_i, i \geq 1\right) := \left(P_i, i \geq 1\right)^{\downarrow} \sim {\rm PD}\left( \beta, \theta\right).$$

\subsection{Recursive distribution equations}

\label{RDE subsection}

Before we can introduce our specific recursive distribution equation (RDE) for the stable tree, it is instructive to review RDEs in their full generality, as presented in \cite[Section~2.1]{ab05}. Denote our underlying probability space by $\left(\Omega,\mathcal{F},\mathbb{P}\right)$. Given two measurable spaces $\left(\mathbb{S},\mathcal{F}_\mathbb{S}\right)$ and $\left(\Theta,\mathcal{F}_{\Theta}\right)$, construct the product space
\begin{equation} \label{RDE prodspace}
\Theta^{*} := \Theta \times \bigcup\limits_{0 \leq m \leq \infty} \mathbb{S}^m,
\end{equation}
where the union is disjoint over $\mathbb{S}^m$, the space of $\mathbb{S}$-valued sequences of lengths $0 \leq m \leq \infty$, and where $\mathbb{S}^{0}:=\lbrace \Delta \rbrace$ is the singleton set and $\mathbb{S}^{\infty}$ is constructed as a typical sequence space. 

Equip $\Theta^{*}$ with the product sigma-algebra. Let $g\colon \Theta^{*} \rightarrow \mathbb{S}$ be a measurable map, and define random variables $\left(\mathcal{S}_{i}, i \geq 0 \right) \in \mathbb{S}^{\infty}$,  $\left(\xi,N\right) \in \Theta \times \overline{\mathbb{N}}:= \Theta \times \lbrace 0,1,\ldots; \infty \rbrace $ as follows.
\begin{enumerate}
\item[(i)] $(\xi,N) \sim \nu$, where $\nu$ is a probability measure on $\Theta \times \overline{\mathbb{N}}$.
\item[(ii)] $\mathcal{S}_{i} \sim \eta$, $i \geq 0$, i.i.d., where $\eta$ is a probability measure on $\mathbb{S}$.
\item[(iii)] $(\xi,N)$ and $\left(\mathcal{S}_i, i \geq 0\right)$ are independent.\pagebreak
\end{enumerate}
Denote by $\mathcal{P}\left(\mathbb{S}\right)$ the set of probability measures on $\left(\mathbb{S},\mathcal{F}_\mathbb{S}\right)$. Given the distribution $\nu$ on $\Theta \times \overline{\mathbb{N}} $, we obtain a mapping
\begin{equation} \label{RDEmap} \Phi\colon \mathcal{P}(\mathbb{S}) \rightarrow \mathcal{P}(\mathbb{S}), \qquad \eta \mapsto \Phi(\eta), \end{equation}
where $\Phi(\eta)$ is the distribution of 
$\mathcal{S} := g(\xi,\mathcal{S}_{i}, 0 \leq i \leq^{*}\!N) \label{rtfequ}$, 
and where the notation $\leq^{*}\!N$ means $\leq N$ for $N < \infty$ and $< \infty$ for $N = \infty$. This lends itself to a fixpoint perspective of RDEs, where we wish to find a distribution of $\mathcal{S}$ such that 
\begin{equation} \label{RDEfixedpt}
\eta = \Phi(\eta) \iff \mathcal{S} \overset{d}{=} g(\xi,\mathcal{S}_{i}, 0 \leq i \leq^{*}\!N) \quad \text{ on } \mathbb{S}.
\end{equation}

In a \textit{recursive tree framework}, the approach of \eqref{rtfequ} is extended recursively to $\mathcal S_i, i \geq 1,$ and beyond. To this end, we will work with the Ulam--Harris-indexation 
$$\mathbb{U}:=\bigcup_{n \geq 0} \mathbb{N}^n,\qquad\mbox{where }\mathbb{N}:=\{0,1,2,\ldots\}. \label{Ulam} $$

Consider a sequence of i.i.d.\ $\Theta \times \overline{\mathbb N}$-valued random variables $(\xi_{\mathbf{u}}, N_{\mathbf{u}}), \mathbf{u} \in \mathbb{U}$. Furthermore, suppose that there are random variables $\tau_{\mathbf{u}}, {\mathbf{u}} \in \mathbb{U}$, possibly on an extended probability space, as follows.
\begin{itemize}
\item[(i)] For all $\mathbf{u} \in \mathbb{U}$, 
\begin{equation} \tau_{\mathbf{u}}=g\left(\xi_{\mathbf{u}}, \tau_{{\mathbf{u}}j}, 1 \leq j \leq^{*}\!N_{\mathbf{u}}\right) \quad \text{ a.s..} \label{45} \end{equation}
\item[(ii)] The variables $(\tau_{\mathbf{u}}, \mathbf{u} \in \mathbb{N}^n)$ are i.i.d.\ with some distribution $\eta_n$, $n \geq 1$.
\item[(iii)] The variables $(\tau_{\mathbf{u}}, \mathbf{u} \in \mathbb{N}^n)$ are independent of the variables $(\xi_{\mathbf{u}}, N_\mathbf{u}, \mathbf{u} \in \bigcup_{k=0}^n \mathbb{N}^k)$.%\pagebreak 
\end{itemize}

In this setup, we may define a \textit{recursive tree framework} as follows.

\begin{definition}[Recursive tree framework]\rm A pair $((\xi_{\mathbf{u}}, N_\mathbf{u}, \mathbf{u} \in \mathbb{U}), g)$ is called a \textit{recursive tree framework} if $(\xi_{\mathbf{u}}, N_\mathbf{u}, \mathbf{u} \in \mathbb{U})$ is an i.i.d.\ family of $\Theta\times \overline{\mathbb N}$-valued random variables $(\xi_{\mathbf{u}},  N_\mathbf{u}) \sim \nu, \mathbf{u} \in \mathbb{U}$, and $g\colon \Theta^* \rightarrow \mathbb{T}$ is a measurable map.
\end{definition}

If we enrich an RTF with the random variables $\tau_{\mathbf{u}}, \mathbf{u} \in \mathbb{U}$, we obtain a so-called \textit{recursive tree process} (RTP). Sometimes, RTPs are only considered up to generation $n$, that is, only for $\tau_{\mathbf{u}}, {\mathbf{u}} \in \bigcup_{k=0}^n \mathbb N^k$. We then speak of an \textit{RTP of depth $n$}. Such finite-depth RTPs can always be defined for any distribution $\eta_n$ of $\tau_{\mathbf{u}}, \mathbf{u} \in \mathbb{N}^n$, and $\eqref{45}$ for generations $n-1, \ldots, 0$. RTPs of infinite depth do not necessarily exist in general. We refer to \cite[Section 2.3]{ab05} for more details on RTFs and RTPs, and connections to Markov chains and Markov transition kernels. 

\subsection{An independence criterion}

To end the Preliminaries section, %Before we study limiting weight partitions in the subtrees of Marchal's algorithm and define our recursive distribution equation for the stable tree, 
we introduce an elementary lemma, which will help us verify certain required independences. We leave its proof to the reader.%\pagebreak% As we were not able to find a proof in the literature, we provide one here.

\begin{lemma} \label{Marchal independence technical Lemma}
Let $T$ be an a.s.\ finite stopping time with respect to a filtration $\left(\mathcal{F}_n\right)_{n \geq 1}$. Suppose that $X$ is a non-negative and bounded random variable satisfying, for each $n \geq 1$, 
$$ \mathbb{E}\left[X\mid\mathcal{F}_T\right] = \mathbb{E}\left[X\mid\mathcal{F}_m\right] \quad \text{a.s.},$$ 
for all $m\geq n$ on $\lbrace T=n \rbrace$. Then $\mathbb{E}[X|\mathcal{F}_T]=\mathbb{E}[X|\mathcal{F}_{\infty}]$ a.s.\ where $\mathcal F_\infty=\sigma\left( \mathcal F_n, n\geq 1 \right)$.
\end{lemma}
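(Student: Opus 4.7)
The plan is to combine the hypothesis with L\'evy's upward convergence theorem. First I would note that since $X$ is bounded (hence integrable), $(M_m)_{m\geq 1}:=(\mathbb{E}[X\mid\mathcal{F}_m])_{m\geq 1}$ is a uniformly integrable martingale with respect to $(\mathcal{F}_m)_{m\geq 1}$. By L\'evy's upward theorem, $M_m\to \mathbb{E}[X\mid\mathcal{F}_\infty]$ a.s.\ and in $L^1$ as $m\to\infty$.

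Next, fix $n\geq 1$. By hypothesis, on the event $\{T=n\}$ we have $\mathbb{E}[X\mid\mathcal{F}_T]=\mathbb{E}[X\mid\mathcal{F}_m]=M_m$ a.s.\ for every $m\geq n$. Letting $m\to\infty$ and using the martingale convergence from the previous step, we conclude that on $\{T=n\}$,
\[
\mathbb{E}[X\mid\mathcal{F}_T]=\lim_{m\to\infty} M_m =\mathbb{E}[X\mid\mathcal{F}_\infty] \quad\text{a.s.}
\]
Finally, since $T$ is a.s.\ finite, the events $\{T=n\}$, $n\geq 1$, cover a set of full probability, so the identity $\mathbb{E}[X\mid\mathcal{F}_T]=\mathbb{E}[X\mid\mathcal{F}_\infty]$ holds a.s.

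There is no real obstacle here: the only subtlety worth flagging is the implicit measurability requirement on $\{T=n\}\in\mathcal{F}_n\subseteq\mathcal{F}_T$, which ensures that the partition-based argument is legitimate and that the a.s.\ identities can be assembled from the countably many events $\{T=n\}$ into a single a.s.\ statement.
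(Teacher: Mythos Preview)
Your argument is correct: the combination of L\'evy's upward theorem with the partition into the events $\{T=n\}$ is exactly the natural route, and the measurability remark you flag is the only point that requires care. The paper itself omits the proof entirely (``We leave its proof to the reader''), so there is nothing to compare against; your write-up would serve perfectly well as the missing proof.
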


%\section{Recursive distribution equations for \texorpdfstring{$\mathbb{R}$}{R}-trees} 

\section{An RDE for $\mathbb{R}$-trees from Marchal's algorithm} \label{Main result subsection}\label{Recursive}
In this section, fix $\alpha \in (1,2]$ and let $\beta=1-1/\alpha \in (0,1/2]$. Unless ambiguity arises, we suppress $\alpha$ hereafter. Note that, in Marchal's algorithm, $\mathbf{T}(2)$ is deterministic, comprising a Y-shape with three leaves $A_0$, $A_1$ and $A_2$ and an internal vertex $V_2$. Denote the edges by $ e_0 := \llbracket A_0,V_2 \rrbracket$, $ e_1 := \llbracket A_1,V_2 \rrbracket$ and $ e_2 := \llbracket A_2,V_2 \rrbracket$. The following heuristic, implicitly employed in the proof of \cite[Proposition~10]{curien2013stable}, outlines the argument in this section.

The independent choice at each step of Marchal's algorithm entails that we have independent sub-constructions of Marchal's algorithm with parameter $\alpha$ evolving along each edge of $\mathbf{T}(2)$. This yields three independent copies of $\mathcal{T}_{\alpha}$, denoted by $\tau_0$, $\tau_1$ and $\tau_2$, subject to rescaling depending on the eventual proportion of mass distributed to each tree. For $\alpha \in (1,2)$, the internal vertex $V_2$ will give rise to a further countably infinite and independent collection of copies of $\mathcal{T}_{\alpha}$ a.s.. Denote this infinite collection by $\left(\tau_i, i \geq 3\right)$, which is independent of $\tau_0$, $\tau_1$ and $\tau_2$. We will rescale and concatenate our collection $\left(\tau_i, i \geq 0\right)$ of independent copies of $\mathcal{T}_{\alpha}$ at $V_2$ to get a copy of $\mathcal{T}_{\alpha}$. Denote the collection of scaling factors in the limit by $\xi = (\xi_i, i \geq 0)$ and the concatenation operator by $g$. We obtain an RDE $$\mathcal{T}_{\alpha} \overset{d}{=} g\left(\xi,\tau_i, i \geq 0\right)$$ in the form \eqref{RDEfixedpt}. To be rigorous, we need to address the following questions.
\begin{enumerate}
\item[1.] What is the distribution of the limiting scaling factors $\xi = (\xi_i, i \geq 0)$?
\item[2.] Are the random variables $\left(\tau_i, i \geq 0\right)$ independent of $\xi$, as well as of each other?
\item[3.] How do we construct the concatenation operation in a measurable way?  
\end{enumerate}

\subsection{The scaling factors $\xi=(\xi_i,i\ge 0)$}

For $i \in \lbrace 0,1,2 \rbrace$ and $n \geq 0$, define $\tau_i^{(n)}$ as the subtree of $\mathbf{T}(n+2)$ cut at $V_2$ containing the edge $e_i$. For example, we have $\tau_i^{(0)} = e_i$ for each $i \in \lbrace 0,1,2 \rbrace$. Let $\mathcal{K}_n$ denote the set of edges incident to $V_2$ in $\mathbf{T}(n+2)$ excluding $\lbrace e_i, i = 0,1,2 \rbrace$, and set $K_n = \left| \mathcal{K}_n \right|$. For $\mathcal{K}_n \neq \emptyset$, $\mathcal{K}_n = \lbrace e_j, j = 3,\ldots,K_n+2 \rbrace$, ordered according to least leaf labels. Define $\sigma^{(n)}$ as the remaining component of $\mathbf{T}(n+2)$ cut at $V_2$ excluding $\bigcup^2_{i=0} \tau_i^{(n)}$. If $\mathcal{K}_n = \emptyset$, then $\sigma^{(n)} = \emptyset$. Otherwise, $\sigma^{(n)} = \bigcup_{j=3}^{K_n+2} \tau_j^{(n)}$ is a union of subtrees $\lbrace \tau_j^{(n)}, j = 3,\ldots,K_n+2 \rbrace$ growing along their respective edges in $\mathcal{K}_n$. We illustrate this in Figure \ref{fig:dissertMarchal}. 

\begin{figure}[t]
  \includegraphics[width=\linewidth]{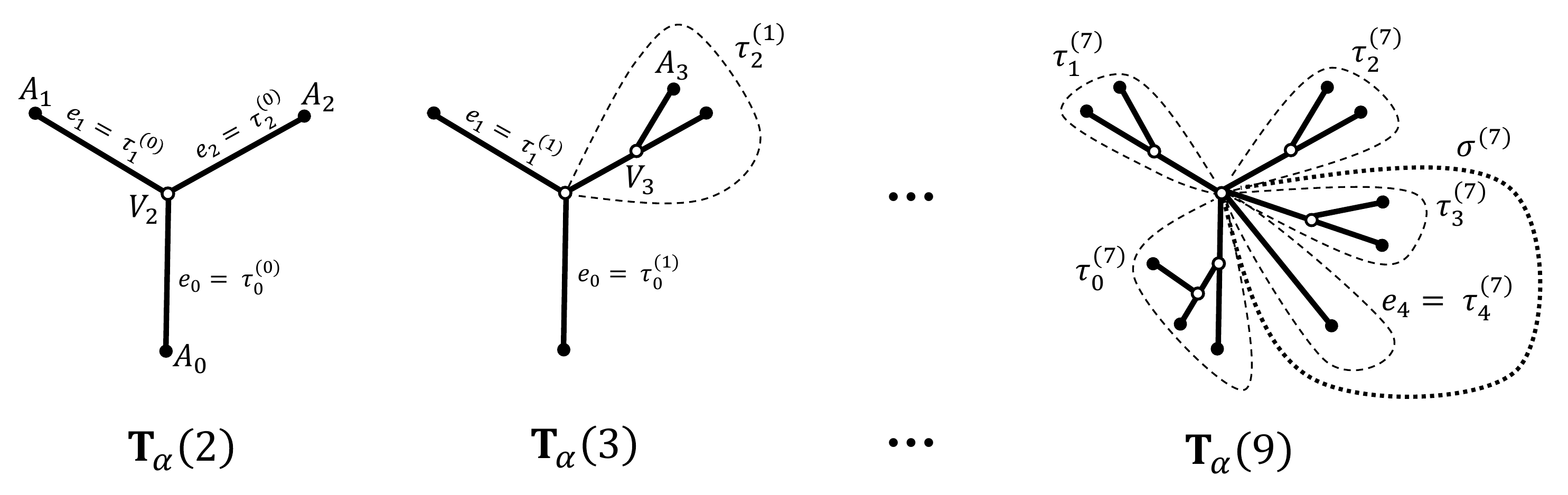}
  \caption{Illustration of Marchal's random growth algorithm and notation employed}
  \label{fig:dissertMarchal}
\end{figure}

Denote the number of leaves in $\tau_i^{(n)}$ excluding $V_2$ by $N_i(n)$ for all $i=0,1,\ldots,K_n+2$, and define its inverse $N^{-1}_i(n) := \inf \lbrace k \geq 0\colon N_i(k) = n \rbrace$ as the first time $k$ at which $\tau_i^{(k)}$ has $n$ leaves excluding $V_2$, with the convention $\inf \emptyset = \infty$. 

Regard $V_2$ as a (weightless) root from the perspective of each element of $\lbrace \tau_i^{(n)}, i = 1,\ldots,K_n+2 \rbrace$ and as a marked leaf of $\tau_0^{(n)}$. For each $i = 1,\ldots,K_n+2$, mark the first leaf created in $\tau_i^{(n)}$ by Marchal's algorithm, that is, the other endpoint of $e_i$ which is not $V_2$. Recall that $A_0$ is the root of $\tau_0^{(n)}$. 

In the limit, denote by $\tau_i^{(\infty)}$ the limiting set of vertices corresponding to the $i$-th subtree. Denote its associated $\mathbb{R}$-tree by $\tau_i$, obtained by the completion of $\tau_i^{(\infty)}$ in the scaling limit as described in Section \ref{Marchal subsection}. Likewise, define $\sigma^{(\infty)}$ and $\sigma$ as the limiting set of vertices and the $\mathbb{R}$-tree associated with $\sigma^{(n)}$, respectively. 

Recall that $W(\cdot)$ measures the total weight of a given sub-structure, e.g., for each $i \in \lbrace 0,1,2 \rbrace$, $W(\tau_i^{(0)})=\alpha-1.$ The following result shows that the weight of a particular subtree only depends on the number of leaves it has, and not on its shape. 

\begin{lemma} \label{Marchal total weight prop}
Regardless of its shape, the total weight of the $i$-th subtree is $W(\tau^{(n)}_i)\!=\!\alpha N_i(n)\!-\!1$ for $i = 0,1,\ldots,K_n+2$ and $n \geq 0$. 
\end{lemma}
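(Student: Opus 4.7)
The plan is to prove the lemma by a direct counting argument that uses one crucial structural feature of Marchal's algorithm: the algorithm only ever creates vertices of degree $1$ or of degree at least $3$, so $\tau_i^{(n)}$ has no vertex of degree $2$. Given this, the total weight $W(\tau_i^{(n)})$ can be read off from the number of its degree-$1$ vertices using only the tree identity $|E| = |V| - 1$ and the handshake identity $\sum_v \deg(v) = 2|E|$; in particular, it is independent of the shape of $\tau_i^{(n)}$, in the same way that $W(\mathbf{T}_\alpha(n)) = n\alpha - 1$ for the full tree.

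First I would determine the number $L$ of degree-$1$ vertices of $\tau_i^{(n)}$. By construction, $V_2$ has degree $1$ in $\tau_i^{(n)}$ because every edge incident to $V_2$ outside $\tau_i^{(n)}$ has been cut; the remaining degree-$1$ vertices of $\tau_i^{(n)}$ are exactly those counted by $N_i(n)$ (for $i \geq 1$ these are the labelled leaves, and for $i = 0$ they comprise the root $A_0$ together with any further labelled leaves), so $L = N_i(n) + 1$ in every case. Every such vertex is weightless in Marchal's scheme (leaves and roots carry no weight, and $V_2$ is explicitly declared weightless from the sub-tree viewpoint), so the entire weight of $\tau_i^{(n)}$ is carried by its edges and by its $B$ branch points of degrees $d_1, \ldots, d_B \geq 3$.

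Next I would carry out the edge/degree bookkeeping. Since $\tau_i^{(n)}$ is a tree with $L + B$ vertices, its edge count is $|E(\tau_i^{(n)})| = L + B - 1$, and the handshake identity gives $\sum_{k=1}^B d_k = 2|E(\tau_i^{(n)})| - L = L + 2B - 2$. Therefore
\begin{align*}
W(\tau_i^{(n)}) &= (L + B - 1)(\alpha - 1) + \sum_{k=1}^B (d_k - 1 - \alpha) \\
&= (L + B - 1)(\alpha - 1) + (L + 2B - 2) - B(1 + \alpha) \\
&= L\alpha - \alpha - 1 = \alpha N_i(n) - 1,
\end{align*}
with all $B$-dependence, and hence all information about the precise shape of $\tau_i^{(n)}$, cancelling in the final simplification.

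There is no substantive obstacle here; the argument is essentially bookkeeping. The only mild care needed is the slightly asymmetric treatment of $\tau_0^{(n)}$, whose root $A_0$ and marked leaf $V_2$ are both weightless degree-$1$ vertices, so the asymmetry is cosmetic for the weight count. An equivalent induction on $n$, case-splitting on Marchal's three step types (split an edge, attach at an existing branch point, or attach a new edge at $V_2$ producing a brand new subtree $\tau_{K_n + 3}^{(n+1)}$), would also work and would verify that the weight of the affected $\tau_j$ increases by exactly $\alpha$ while $N_j$ increases by $1$ in the first two cases; I nonetheless favour the direct counting because it makes the shape-independence transparent from the outset.
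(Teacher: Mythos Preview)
Your argument is correct. The counting is clean, and your observation that $\tau_i^{(n)}$ has no degree-$2$ vertices (since cutting at $V_2$ only lowers the degree of $V_2$ itself, which becomes a weightless degree-$1$ vertex) is exactly what makes the direct computation go through.

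The paper takes a different route: it simply invokes induction on each subtree, implicitly relying on the already-noted fact that $W(\mathbf{T}_\alpha(n)) = n\alpha - 1$ for the full Marchal tree and that each $\tau_i^{(\cdot)}$ evolves by the same dynamics at its own transition times. Your direct handshake/edge-count argument is more self-contained and, as you point out, makes the shape-independence immediate rather than emergent from an inductive step; the paper's one-line induction is shorter but leans on the reader to fill in the case analysis you sketch at the end. Either approach is adequate for such a bookkeeping lemma.
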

\begin{proof} This follows simply by induction applied to each subtree. 
\end{proof}
%We are now able to give a result on the limiting weight partitions in the subtrees due to Marchal's algorithm. Recall that $\beta = 1 - 1/\alpha$. 
\begin{proposition} \label{Marchal mass distribution prop}
We have the following limiting results for weights. 
\begin{enumerate}
\item[(i)] For $\alpha = 2$, $K_n = 0$ a.s.\ for all $n \geq 0$. The relative weight split in $\mathbf{T}_{\alpha}(n)$ has an almost sure limit as $n \rightarrow \infty$ given by  
$$ \left(\frac{W\left(\tau_0^{(n)}\right)}{2n+3},\frac{W\left(\tau_1^{(n)}\right)}{2n+3},\frac{W\left(\tau_2^{(n)}\right)}{2n+3}\right) \overset{a.s.}{\longrightarrow} \left(X_0,X_1,X_2\right) $$
where $\left(X_0,X_1,X_2\right) \sim \textnormal{Dir}\left(1/2,1/2,1/2\right)$.  
\item[(ii)] For $\alpha \in (1,2)$, $K_n \rightarrow \infty$ as $n \rightarrow \infty$  almost surely. The relative weight split in $\mathbf{T}_{\alpha}(n)$ has an almost sure limit as $n \rightarrow \infty$ given by
\begin{equation}\left(\!\frac{W\left(\tau_0^{(n)}\right)}{(n\!+\!2)\alpha\!-\!1},\frac{W\left(\tau_1^{(n)}\right)}{(n\!+\!2)\alpha\!-\!1},\frac{W\left(\tau_2^{(n)}\right)}{(n\!+\!2)\alpha\!-\!1},\frac{W\!\left(\sigma^{(n)}\right) \!+\! W\!\left(\lbrace V_2 \rbrace\right)}{(n+2)\alpha-1}\!\right)\! \overset{a.s.}{\longrightarrow} \left(X_0,X_1,X_2,X_3\right), \label{weight limits} \end{equation}
where $\left(X_0,X_1,X_2,X_3\right) \sim \textnormal{Dir}\left(\beta,\beta,\beta,1-2\beta\right)$. Within the last part, denote the eventual proportion of weight distributed to the subtree $\tau_{i+2}$ by $P_i$ for $i \geq 1$. Then, $$\left(P_i, i \geq 1\right) \sim \textnormal{GEM}\left(1-\beta,1-2\beta\right).$$ In particular, the subtrees $\tau_i$, $i \geq 3$, have a relative weights partition that follows
a $\textnormal{PD}(1-\beta,1-2\beta)$ distribution, when ranked in decreasing order.
\end{enumerate} 
\end{proposition}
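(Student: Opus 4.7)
The plan is to view the decomposition of $\mathbf{T}_\alpha(n+2)$ around $V_2$ as a \emph{nested} pair of urn processes driven by Marchal's algorithm: a four-colour P\'olya urn at the top, redistributing weight among the three initial edges $e_0,e_1,e_2$ and the $V_2$-and-$\sigma$ region, and, inside the fourth colour, a Chinese restaurant process that further splits the mass of $\sigma^{(n)}$ among the countably many $\tau_j^{(n)}$, $j\geq 3$. Both parts of the proposition then follow by applying Proposition~\ref{Polya Urn Theorem} and, in part (ii), Proposition~\ref{CRP Limit Theorems}.

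For (i), when $\alpha=2$ the branch-point weight $d-1-\alpha$ at $V_2$ (of degree $d=3$) vanishes, and no further edge ever attaches at $V_2$. Hence $K_n=0$ a.s., and every Marchal step lands in one of $\tau_0^{(n)},\tau_1^{(n)},\tau_2^{(n)}$, raising its weight by $\alpha=2$ by Lemma~\ref{Marchal total weight prop}. The triple $(W(\tau_0^{(n)}),W(\tau_1^{(n)}),W(\tau_2^{(n)}))$ thus fits Definition~\ref{Generalised Polya urn} with $K=3$, step size $t=2$ and initial weights $\gamma_i=\alpha-1=1$; Proposition~\ref{Polya Urn Theorem} delivers the $\textnormal{Dir}(1/2,1/2,1/2)$ limit.

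For (ii), I first verify that the four-tuple on the left of \eqref{weight limits} evolves as a P\'olya urn with step size $\alpha$ and initial weights $(\alpha-1,\alpha-1,\alpha-1,2-\alpha)$. The essential point is that every Marchal step increases the weight of its host block by exactly $\alpha$: a selection inside some $\tau_i^{(n)}$ does so by Lemma~\ref{Marchal total weight prop}, while a selection of $V_2$ itself lifts $V_2$'s own weight from $d-1-\alpha$ to $d-\alpha$ (a $+1$ contribution) and attaches a new edge of weight $\alpha-1$, totalling $\alpha$. Proposition~\ref{Polya Urn Theorem} then yields the $\textnormal{Dir}(\beta,\beta,\beta,1-2\beta)$ limit and hence \eqref{weight limits}, with $X_3>0$ a.s.

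For the GEM/PD statement I zoom into the fourth block. Let $M_n$ be the number of Marchal selections landing in $\{V_2\}\cup\sigma^{(n)}$ by step $n$; since $M_n/n\to X_3>0$ a.s., $M_n\to\infty$ a.s. A direct calculation shows that, conditionally on a selection falling in the block, at the $(m+1)$-th such selection the probability of picking $V_2$ is $(K_m+2-\alpha)/((2-\alpha)+m\alpha)$ and of picking within $\tau_{j+2}^{(n)}$ of current leaf-count $N_j$ is $(\alpha N_j-1)/((2-\alpha)+m\alpha)$; these coincide, after division by $\alpha$, with the $\textnormal{CRP}(1-\beta,1-2\beta)$ transitions of Definition~\ref{CRP Defn}, under the identification of opening a new table with selecting $V_2$ and of joining table $j$ with a selection within $\tau_{j+2}^{(n)}$. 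Proposition~\ref{CRP Limit Theorems} then gives $K_n=K_{M_n}\to\infty$ a.s., together with $N_j(M_n)/M_n\to P_j$ and $(P_j)_{j\geq 1}\sim\textnormal{GEM}(1-\beta,1-2\beta)$; the $\textnormal{PD}(1-\beta,1-2\beta)$ ranking is then immediate. The main delicate point is the clean decoupling of the two urn processes: one needs that, given a Marchal step falls in the $V_2$-block, the within-block sub-choice depends only on the current within-block configuration, a factorisation that is automatic from Marchal's single global weighted pick but should be recorded explicitly to justify applying Proposition~\ref{CRP Limit Theorems} to the sub-chain indexed by $M_n$.
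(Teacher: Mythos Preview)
Your proposal is correct and follows essentially the same route as the paper: a four-colour P\'olya urn (Definition~\ref{Generalised Polya urn}, Proposition~\ref{Polya Urn Theorem}) for the outer split, and a $\textnormal{CRP}(1-\beta,1-2\beta)$ (Proposition~\ref{CRP Limit Theorems}) run along the subsequence of steps landing in the $V_2$-block for the inner split. The paper phrases the inner step in terms of leaf counts $N_{\sigma}(n)$ and then explicitly converts back to weight proportions via Lemma~\ref{Marchal total weight prop}; you should record that last conversion, but otherwise the arguments coincide.
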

\begin{proof} We prove (ii). From Lemma \ref{Marchal total weight prop}, conditional on an edge or branch point in $\tau_i^{(n)}$ being selected in the next step of Marchal's algorithm, we increase the weight in $\tau_i^{(n)}$ by $\alpha$. It is easy to check that this also holds for $\sigma^{(n)}$ with one weighted copy of $V_2$ included. Hence,
\begin{equation} \label{Prop 4.2.2 eqn}
\left(W\left(\tau_0^{(n)}\right),W\left(\tau_1^{(n)}\right),W\left(\tau_2^{(n)}\right),W\left(\sigma^{(n)}\right) + W\left(\lbrace V_2 \rbrace\right)\right)
\end{equation}
evolves precisely as the P\'{o}lya urn scheme in Definition \ref{Generalised Polya urn} with $K\!=\!4$, initialisation vector $\vec{\gamma} = \left(\alpha\!-\!1,\alpha\!-\!1,\alpha\!-\!1,2\!-\!\alpha\right)$ and step-size $t\!=\!\alpha$. Therefore, \eqref{weight limits} holds by Proposition \ref{Polya Urn Theorem}. 

Next, we focus on the subtrees within $\sigma^{(n)}$. The above implies that $W\left(\sigma^{(n)}\right)+W\left(\lbrace V_2 \rbrace\right) \rightarrow \infty$ as $n \rightarrow \infty$ a.s.. So, a.s., we observe infinitely many leaves being added to $\left(\sigma^{(n)}, n\geq1\right)$. We may then condition on the times where a leaf is added to $\left(\sigma^{(n)}, n\geq1\right)$, say $(q_i, i \geq 1)$, where $1 \leq q_1 < q_2 < \cdots <q_n<q_{n+1}<\cdots$ is an infinite sequence a.s.. Conditional on the preceding event, the first leaf added creates $\tau_3^{(q_1)}$. At each $r \in \lbrace q_n,\ldots,q_{n+1}-1 \rbrace$, we have $n$ leaves (not including $V_2$) with $K_{q_n}$ subtrees whose union is $\sigma^{(r)}$. For $j = 3,\ldots,K_{q_n}+2$, $\tau^{(r)}_{j}$ has $N_j(q_n)$ leaves (not including $V_2$), and so has total weight $\alpha N_j(q_n) - 1$, by Lemma \ref{Marchal total weight prop}. Thus, as the total weight of $V_2$ is $2+K_{q_n}-\alpha$, the total weight of $\sigma^{(r)}$ and $\lbrace V_2 \rbrace$ is $\alpha n + (2-\alpha)$. At the next arrival time $q_{n+1}$, we add a leaf to $\tau^{(q_{n+1}-1)}_{j}$ with probability $\left(\alpha N_j(q_n)-1\right)/\left(\alpha n + 2-\alpha\right)$ and we create a new sub-tree with probability $\left(2 + K_{q_n} - \alpha\right)/\left(\alpha n + 2-\alpha\right)$. Regarding the leaves (excluding $V_2$) as customers and each subtree as a table, this models a Chinese restaurant process with parameters $(1-\beta,1-2\beta)$, according to Definition \ref{CRP Defn}. From Proposition \ref{CRP Limit Theorems}, $K_n \rightarrow \infty$ as $n \rightarrow \infty$  almost surely. Recall $q_1 < \infty$ almost surely, so we may assume $n \geq q_1$. From Proposition \ref{CRP Limit Theorems}, we can identify the almost sure limiting proportion of leaves split within subtrees of $\sigma$ as $\textnormal{GEM}(1-\beta,1-2\beta)$ holding along the increasing subsequence $(q_i, i \geq 1)$. That is,
$$ \left(\frac{N_3(q_n)}{n}, \ldots,\frac{N_{K_{q_n}+2}(q_n)}{n},0,0,\ldots\right) \overset{a.s.}{\longrightarrow} \left(P_i, i \geq 1\right) \quad \text{ as } n \rightarrow \infty, $$
where $\left(P_i, i \geq 1\right) \sim \textnormal{GEM}(1-\beta,1-2\beta)$. 
Write $N_{\sigma}(n)$ as the number of leaves in $\sigma^{(n)}$ excluding $V_2$. Noting that $N_{\sigma}(n) > 0$ for $n \geq q_1$, we may rephrase the above as
\begin{equation} \label{Marchal mass distrn prop EQN 1}
\left(\frac{N_3(n)}{N_{\sigma}(n)},\ldots,\frac{N_{K_{n}+2}(n)}{N_\sigma(n)},0,0,\ldots\right) \overset{a.s.}{\longrightarrow} \left(P_i, i \geq 1\right) \quad \text{ as } n \rightarrow \infty. 
\end{equation}
Using the relation $W\left(\sigma^{(n)}\right)+ W\left(\lbrace V_2 \rbrace \right) = \alpha N_{\sigma}(n) + 2 - \alpha$, and the aggregation property of the Dirichlet distribution in Proposition \ref{Dirichlet distn properties prop} applied to \eqref{Prop 4.2.2 eqn}, we get that 
\begin{equation} \label{Marchal mass distrn prop EQN 2}
\frac{N_{\sigma}(n)}{n} \overset{a.s.}{\longrightarrow} X_3 \quad \text{ as } n \rightarrow \infty,
\end{equation}
where $X_3 \sim \textnormal{Beta}(1-2\beta,3\beta)$. By the algebra of almost sure convergence,
$$ \left(\frac{N_3(n)}{n}, \ldots,\frac{N_{K_{n}+2}(n)}{n},0,0,\ldots\right) \overset{a.s.}{\longrightarrow} \left(X_3P_i, i \geq 1 \right) \quad \text{ as } n \rightarrow \infty. $$
Therefore, for all $j = 3,\ldots,K_n+2$ and $n \geq q_1$, as we have $W\left(\tau^{(n)}_{j}\right) = \alpha N_j(n) - 1$, the above implies that, jointly in $j$,
$$ \frac{W\left(\tau^{(n)}_{j}\right)}{(n+2)\alpha - 1} = \frac{\frac{N_j(n)}{n}-\frac{1}{\alpha n}}{\frac{n+2}{n}-\frac{1}{\alpha n}} \overset{a.s.}{\longrightarrow} X_3P_{j-2} \quad \text{ as } n \rightarrow \infty, $$
where $X_3 \sim \textnormal{Beta}(1-2\beta,3\beta)$ and $(P_i, i \geq 1) \sim \textnormal{GEM}(1-\beta,1-2\beta)$. Thus, we have obtained the almost sure limiting weight partition for the subtrees $(\tau_j, j \geq 0)$. The proof of (i) follows noting that $\sigma^{(n)} = \emptyset$ for all $n \geq 1$ almost surely when $\alpha=2$.
\end{proof}

We will establish the independence of $(X_0,X_1,X_2,X_3)$ and $(P_i,i\ge 1)$ in Proposition \ref{Marchal independence SCALING FACTOR theorem} to
fully specify the distribution of $(\xi_j,j\ge 0)$.

\subsection{Independent copies of Marchal's algorithm at the first branch point}

The proof of the following result is inspired by {\cite[Lemma~8]{curien2013stable}} in considering transition times at which a leaf is added into a subtree. However, we extend the result from \cite{curien2013stable} by considering transitions jointly over multiple subtrees. We restrict our considerations to $\alpha \in (1,2)$, i.e. the infinitary case. The result can easily be extended to the Brownian case $\alpha=2$.

\begin{proposition} \label{Marchal independence Theorem}
For $n\geq 1$ and $i\geq 0$, we have $\tau_i^{(N^{-1}_i(n))} \overset{d}{=} \mathbf{T}(n)$. That is, at transition times in which a leaf is added into the $i$-th subtree, it evolves as Marchal's algorithm with parameter $\alpha \in (1,2)$ with initial edge $e_i$. The sigma-field generated by
$$ \left(\tau_i^{(N^{-1}_i(n))}, n \geq 1\right)_{i \geq 0 }$$
is independent of the sigma-field generated by $\left(N_i(n), n\!\geq\!1, i\!\geq\!0\right)$. Consequently, $\left(\tau_i, i\!\geq\!0\right)$ are independent. Furthermore, $\left(\tau_i, i \geq 0\right)$ is independent of $\left(N_i(n), n \geq 1, i\geq 0\right)$. 
\end{proposition}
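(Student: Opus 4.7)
The approach is to formalize the heuristic outlined at the start of this section by decomposing each step of Marchal's algorithm on $\mathbf{T}(n+2)$ into (i) an \emph{allocation step}, determining which subtree $\tau_i^{(n)}$ is extended (or, if $V_2$ itself is chosen, creating a new subtree off of $V_2$), and (ii) an \emph{internal step}, performing the weight-proportional insertion inside the chosen subtree. The key input is Lemma \ref{Marchal total weight prop}, which says that the total weight of $\tau_i^{(n)}$ depends only on its leaf count $N_i(n)$ and not on its shape; this is exactly what lets the two layers of randomness be cleanly separated.

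For the distributional claim, condition at step $n+3$ on $\mathcal{F}_n := \sigma(\mathbf{T}(k): k \leq n+2)$. By Lemma \ref{Marchal total weight prop}, the conditional probability that a leaf is added to $\tau_i^{(n)}$ equals $(\alpha N_i(n) - 1)/((n+2)\alpha - 1)$, a function of leaf counts alone; moreover, given that subtree $i$ is chosen, the weight-proportional selection within $\tau_i^{(n)}$ is precisely one step of Marchal's algorithm applied to $\tau_i^{(n)}$. Since $N^{-1}_i(n+1)$ is defined as the step immediately after $N^{-1}_i(n)$ at which subtree $i$ next grows, an induction on $n$ yields $\tau_i^{(N^{-1}_i(n))} \overset{d}{=} \mathbf{T}(n)$ and, more generally, that the joint law of $(\tau_i^{(N^{-1}_i(n))}, n \geq 1)$ coincides with that of Marchal's algorithm.

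For the independence claim, introduce the allocation filtration $\mathcal{G}_m := \sigma(N_j(k) : k \leq m, j \geq 0)$, with respect to which $T := N^{-1}_i(n)$ is a stopping time. Two observations are then combined. First, the conditional law of $\tau_i^{(T)}$ given $\mathcal{G}_T$ is $\mathbf{T}(n)$: the $n$ internal Marchal insertions up to $T$ use weights depending only on the evolving shape of $\tau_i^{(\cdot)}$, so allocation information about other subtrees contributes nothing. Second, conditional on $\mathcal{G}_T$, the continuation of the allocation on $(T, m]$ is governed by probabilities that, again by Lemma \ref{Marchal total weight prop}, depend only on current leaf counts together with fresh independent randomness, so that $\mathbb{E}[f(\tau_i^{(T)}) \mid \mathcal{G}_m] = \mathbb{E}[f(\tau_i^{(T)}) \mid \mathcal{G}_T]$ on $\{T = r\}$ for every $m \geq r$ and bounded measurable $f$. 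Lemma \ref{Marchal independence technical Lemma} then upgrades this identity to $\mathbb{E}[f(\tau_i^{(T)}) \mid \mathcal{G}_\infty] = \mathbb{E}[f(\tau_i^{(T)})]$, yielding independence of $\tau_i^{(T)}$ from the full allocation. The joint independence across all $(i, n)$ is obtained by iterating the argument; a convenient way is to realize Marchal's algorithm via independent uniform variables $(U_k, V_k)_{k \geq 1}$ where $U_k$ drives the allocation step and $V_k$ the internal step, so that the decoupling is manifest at the level of the driving noise.

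The remaining conclusions follow immediately, since each $\tau_i$ is a measurable function of $(\tau_i^{(N^{-1}_i(n))}, n \geq 1)$: independence of these sequences across $i$ and from the allocation transfers to $(\tau_i, i \geq 0)$. I expect the main obstacle to be the bookkeeping of the joint independence across subtrees and time indices, since the stopping times $N^{-1}_i(n)$ are interleaved in a complicated way and one must ensure that the entire infinite family of internal insertions decouples into independent Marchal processes; the device of independent uniforms above, combined with an application of Lemma \ref{Marchal independence technical Lemma} at a joint stopping time, is the natural way to handle this.
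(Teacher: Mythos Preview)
Your proposal is correct and follows essentially the same route as the paper: both proofs rest on the allocation/internal decomposition enabled by Lemma \ref{Marchal total weight prop}, work with the filtration $\mathcal{G}_m=\sigma(N_j(k):k\le m,\,j\ge 0)$, and invoke Lemma \ref{Marchal independence technical Lemma} at a stopping time to pass from $\mathcal{G}_T$ to $\mathcal{G}_\infty$. The only organisational difference is that the paper goes directly to the joint case via the single stopping time $T=\max_{0\le i\le m+2}N_i^{-1}(M)$ and uses Proposition \ref{Marchal tree shape prob} to factorise the conditional tree-shape probabilities explicitly, whereas you first treat one $(i,n)$ and then indicate the joint step; your final remark about ``a joint stopping time'' is exactly the device the paper employs.
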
 

\begin{proof} From Proposition \ref{Marchal mass distribution prop}, we have $K_n \rightarrow \infty$ a.s. as $n \rightarrow \infty$. In particular, for all $i \geq 0$ and $n \geq 1$, $N^{-1}_i(n) < \infty$ a.s.. We assume this holds henceforth. It suffices to show the independence of the sigma-fields generated by $$\left(N_i(n), n \geq 1,i\geq 0\right)\qquad \text{and} \qquad \left(\tau_i^{(N^{-1}_i(n))}, n \geq 1 \right)_{0 \leq i \leq m+2},$$ respectively, where $m \geq 0$ is arbitrary but fixed. 

Consider a given time $n \geq N^{-1}_{m+2}(1)$. Conditional on a leaf being added to the $i$-th subtree for $0 \leq i \leq m+2$, we have the dynamics of Marchal's algorithm with parameter $\alpha$ by the weight-leaf relation in Lemma \ref{Marchal total weight prop}. Likewise, the transition in the other components, not including the $i$-th subtree for $0 \leq i \leq m+2$, follows the correct conditional distributions of Marchal's algorithm. This proves the distributional identity $\tau_i^{(N^{-1}_i(n))} \overset{d}{=} \mathbf{T}(n)$ at transition times in the $i$-th subtree for $0 \leq i \leq m+2$. 

Let $M > 1$ be arbitrary, but fixed, and denote the natural filtration of $\left(N_i(n),i\geq 0\right)_{n \geq 1}$ by $\left(\mathcal{F}_n\right)_{n \geq 1}$. Note that for any fixed $n \geq 1$, $\left(N_i(n), i \geq 0 \right)$ is almost surely a vector with finitely many non-trivial entries. Define $T := \max_{i = 0,\ldots,m+2} N_i^{-1}(M)$, which is a stopping time with respect to $\left(\mathcal{F}_n\right)_{n \geq 1}$. By assumption, $T < \infty$ a.s.. Conditional on $\mathcal{F}_T$ (which is the same as conditioning on relative weights on subtrees until time $T$), we have factorisation of tree shape probabilities into tree shape probabilities for the respective subtrees cut at $V_2$. In particular, given $\mathcal{F}_T$, the tree shapes of $$\left(\tau_i^{(N^{-1}_i(n))}, 1 \leq n \leq M \right)_{0 \leq i \leq m+2}$$ are independent. Furthermore, on the event $\lbrace T = t \rbrace$, conditioning on the sigma-field generated at a later time $k \geq t$ does not affect the tree shapes under consideration. Hence, the hypotheses in Lemma \ref{Marchal independence technical Lemma} are fulfilled. Let $\mathbf{t}_i^{(n)}$ be some given leaf-labelled trees with $n$ leaves and a root. Then,
\allowdisplaybreaks[1]
\begin{align}
& \mathbb{P}\left(\tau_i^{(N^{-1}_i(n))} = \mathbf{t}_i^{(n)}, 1 \leq n \leq M , 0 \leq i \leq m+2 ~\bigg\rvert~ \mathcal{F}_{\infty} \right) \nonumber \\
& =\mathbb{P}\left(\tau_i^{(N^{-1}_i(n))} = \mathbf{t}_i^{(n)}, 1 \leq n \leq M , 0 \leq i \leq m+2 ~\bigg\rvert~ \mathcal{F}_{T} \right) \nonumber     \\
& =\mathbb{P}\left(\tau_i^{(N^{-1}_i(M))} = \mathbf{t}_i^{(M)}, 0 \leq i \leq m+2 ~\bigg\rvert~ \mathcal{F}_{T} \right) \label{Marchal independence equation 3} \\
& = \prod_{i=0}^{m+2} \mathbb{P}\left(\tau_i^{(N^{-1}_i(M))} = \mathbf{t}_i^{(M)}~\bigg\rvert~ \mathcal{F}_{T} \right)  \label{Marchal independence equation} \\
& = \prod_{i=0}^{m+2} \mathbb{P}\left(\mathbf{T}(M) = \mathbf{t}_i^{(M)}~\bigg\rvert~\mathcal{F}_{T} \right) \label{Marchal independence equation 2} \\
& = \prod_{i=0}^{m+2} \mathbb{P}\left(\mathbf{T}(M)= \mathbf{t}_i^{(M)}\right), \label{Marchal independence equation 4} 
\end{align} 
where \eqref{Marchal independence equation 3} holds since $\tau_i^{(N^{-1}_i(M))}$ determines $\tau_i^{(N^{-1}_i(n))}$ for all $1 \leq n \leq M$, \eqref{Marchal independence equation} holds by Proposition \ref{Marchal tree shape prob}, and \eqref{Marchal independence equation 4} follows since there is no dependence on $\mathcal{F}_{\infty}$ in evaluating \eqref{Marchal independence equation 2} and we are conditioning over an almost surely finite number of discrete random variables. Furthermore, since the final expression does not depend on $\mathcal{F}_{\infty}$, the sigma-field of $$\left(\tau_i^{(N^{-1}_i(n))}, 1 \leq n \leq M \right)_{0 \leq i \leq m+2}$$ is independent of $\mathcal{F}_\infty$. Letting $M \rightarrow \infty$, and recalling $m \geq 0$ is arbitrary, the claimed independence of the $\sigma$-fields follows.

As the collection $(\tau_i, i \geq 0)$ is measurably constructed from $ (\tau_i^{(N^{-1}_i(n))}, n \geq 1)_{i \geq 0},$ it is independent of $(N_i(n), n \geq 1, i\geq 0)$. Dropping the conditioning in \eqref{Marchal independence equation 2}, we get that $\big(\tau_i^{(N^{-1}_i(n))}, 1 \leq n \leq M\big)$, $0 \leq i \leq m+2$, are independent. Thus, in the limit as $M \rightarrow \infty$, $\big(\tau_i^{(N^{-1}_i(n))}, n \geq 1\big)$, $0 \leq i \leq m+2$, are independent. As $\tau_i$ is measurably constructed from $(\tau_i^{(N^{-1}_i(n))}, n \geq 1)$ for each $0 \leq i \leq m+2$, $\big(\tau_i, 0 \leq i \leq m+2\big)$ are independent. Let $m \rightarrow \infty$ to conclude that $(\tau_i, i \geq 0)$ are independent.  
\end{proof}

\begin{proposition} \label{Marchal independence SCALING FACTOR theorem}
For $\alpha \in (1,2)$, the random variables $\left(X_0,X_1,X_2,X_3\right)$ and $\left(P_i, i \geq 1\right)$ as defined in Proposition \ref{Marchal mass distribution prop} are independent. In particular, this fully specifies their joint distribution. 
\end{proposition}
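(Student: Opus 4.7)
The plan is to realize Marchal's algorithm on an enlarged probability space in a way that decouples the four-colour Pólya urn (whose limit is $(X_0,X_1,X_2,X_3)$) from the within-$\sigma$ Chinese restaurant process (whose limit is $(P_i, i \geq 1)$). The key structural observation, already implicit in the proof of Proposition \ref{Marchal mass distribution prop}(ii), is the following two-stage factorization of the Marchal transition at step $n$: first, one of the four blocks $\tau_0^{(n)}, \tau_1^{(n)}, \tau_2^{(n)}, \sigma^{(n)}\cup\{V_2\}$ is selected with probability proportional to its total weight, yielding the four-colour Pólya urn dynamics; second, if the fourth block is selected, then either one of the existing subtrees $\tau_j^{(n)}$ ($j \geq 3$) is chosen or a fresh subtree is opened at $V_2$, with probabilities given by the $\textnormal{CRP}(1-\beta, 1-2\beta)$ rule. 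Given the current state, these two choices are conditionally independent.

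Accordingly, let $(Z_n)_{n\geq 1}$ and $(Y_n)_{n\geq 1}$ be two independent i.i.d.\ $U[0,1]$ sequences on an enriched probability space, and run Marchal's algorithm by using $Z_n$ to perform the four-way split at step $n$ and, whenever the fourth block is selected, using $Y_n$ for the CRP transition. Set $\mathcal{G}_n := \sigma(Z_1,\ldots,Z_n)$ and $\mathcal{G}:=\bigvee_n \mathcal{G}_n$. By construction, the four-colour Pólya urn sequence is $\mathcal{G}$-measurable, so its limit $(X_0,X_1,X_2,X_3)$ obtained via Proposition \ref{Polya Urn Theorem} is $\mathcal{G}$-measurable. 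The successive times $(q_n)_{n\geq 1}$ at which the fourth block is selected are a.s.\ finite $(\mathcal{G}_n)$-stopping times. A standard independent subsampling argument---conditioning on the joint value of $(q_1,\ldots,q_k)$ for each $k\geq 1$ and using the independence of $(Y_n)$ from $(Z_n)$---shows that $(Y_{q_n})_{n \geq 1}$ is i.i.d.\ with the same marginal as $Y_1$ and independent of $\mathcal{G}$. Since the within-$\sigma$ CRP is a deterministic function of $(Y_{q_n})$, so is its limit $(P_i, i\geq 1)$ obtained via Proposition \ref{CRP Limit Theorems}, which is therefore independent of $\mathcal{G}$ and in particular of $(X_0,X_1,X_2,X_3)$.

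The only real checkpoint is verifying that this two-source construction faithfully reproduces Marchal's algorithm in distribution; this reduces to matching conditional transition probabilities, which is precisely the weight factorization used in the proof of Proposition \ref{Marchal mass distribution prop}(ii), so it is routine. Combining everything, the joint law is $\textnormal{Dir}(\beta,\beta,\beta,1-2\beta) \otimes \textnormal{GEM}(1-\beta, 1-2\beta)$, which fully specifies the distribution of the limiting scaling factors $\xi=(\xi_i, i \geq 0)$.
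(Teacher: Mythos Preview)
Your argument is correct and takes a genuinely different route from the paper. The paper works intrinsically within Marchal's algorithm: it considers the filtration $(\mathcal{F}_n)$ generated by $(N_0^{-1}(n),N_1^{-1}(n),N_2^{-1}(n),N_\sigma^{-1}(n))$, shows via the stopping-time criterion of Lemma~\ref{Marchal independence technical Lemma} that the within-$\sigma$ leaf-count process $(N_j(N_\sigma^{-1}(n)))_{j\ge 3}$ is independent of $\mathcal{F}_\infty$, and then checks that $(X_0,X_1,X_2,X_3)$ is $\mathcal{F}_\infty$-measurable while $(P_i,i\ge 1)$ is measurable with respect to the other sigma-field. You instead enlarge the probability space and \emph{manufacture} the independence by driving the four-colour urn and the CRP with separate i.i.d.\ uniform sources $(Z_n)$ and $(Y_n)$; the independent-subsampling lemma (that $(Y_{q_n})_{n\ge 1}$ remains i.i.d.\ uniform and independent of $\mathcal{G}=\sigma(Z_n,n\ge 1)$ when the $q_n$ are $\mathcal{G}$-stopping times) then delivers the conclusion directly.

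Both arguments rest on the same structural input, namely the two-stage factorization of the Marchal transition already isolated in the proof of Proposition~\ref{Marchal mass distribution prop}(ii). Your coupling approach is arguably more elementary and self-contained: it bypasses Lemma~\ref{Marchal independence technical Lemma} and exhibits the product structure explicitly. The paper's approach has the virtue of staying on the original probability space and reusing the same stopping-time machinery that also drives Proposition~\ref{Marchal independence Theorem}, which gives the section a uniform style. One small point worth making explicit in your write-up is that the four-colour urn weights at step $n$ depend only on the past block choices (not on any within-$\sigma$ decisions), so the urn really is $(\mathcal{G}_n)$-adapted; you assert this but it is the linchpin of the decoupling.
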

\begin{proof} Recall that $N_{\sigma}(n)$ denotes the number of leaves (excluding $V_2$) in $\sigma^{(n)}$, and define $N_{\sigma}^{-1}(n) := \inf \lbrace k \geq 0\colon N_{\sigma}(k) = n \rbrace$. We claim that the sigma-field generated by $\left(N_j \left(N_{\sigma}^{-1}(n)\right), n \geq 1\right)_{j \geq 3}$ is independent of the sigma-field generated by $$\left(N_0^{-1}(n),N_1^{-1}(n),N_2^{-1}(n),N_{\sigma}^{-1}(n), n \geq 1\right).$$ Let $(\mathcal{F}_n)_{n \geq 1}$ denote the natural filtration of $(N_0^{-1}(n),N_1^{-1}(n),N_2^{-1}(n),N_{\sigma}^{-1}(n))_{n \geq 1}$. It suffices to prove that the sigma-field generated by $(N_j \left(N_{\sigma}^{-1}(n)), n \geq 1\right)_{3 \leq j \leq m}$ is independent of $\mathcal{F}_{\infty}$, where $m \geq 3$ is arbitrary but fixed. By Proposition \ref{Marchal mass distribution prop}, almost surely, $N^{-1}_{\sigma}(n) < \infty$ for all $n \geq 1$. For $M \geq 1$ arbitrary but fixed, $T := N^{-1}_{\sigma}(M)$ is an almost surely finite stopping time with respect to $(\mathcal{F}_n)_{n \geq 1}$. Consider the random variables $(N_j (N_{\sigma}^{-1}(n)), 1 \leq n \leq M)_{3 \leq j \leq m}$. On the event $\lbrace T = t \rbrace$, conditioning on the sigma-field generated at a later time $k \geq t$ does not affect conditional expectations. Hence, by Lemma \ref{Marchal independence technical Lemma}, for all non-negative integers $l_j(n)$, 
\begin{align*}
\mathbb{P} &\left(N_j \left(N_{\sigma}^{-1}(n)\right)= l_j(n), 1 \leq n \leq M , 3 \leq j \leq m ~\bigg\rvert~ \mathcal{F}_{\infty} \right) \\
& =\mathbb{P}\left(N_j \left(N_{\sigma}^{-1}(n)\right)= l_j(n), 1 \leq n \leq M , 3 \leq j \leq m ~\bigg\rvert~ \mathcal{F}_{T} \right) \nonumber     \\
& = \mathbb{P}\left(N_j \left(N_{\sigma}^{-1}(n)\right)= l_j(n), 1 \leq n \leq M , 3 \leq j \leq m \right). \nonumber
\end{align*} 
The last equality follows, since conditional on a leaf being added to $\sigma^{(n)}$ at time $n+1$, the process of adding leaves to each subtree within $\sigma^{(n)}$ is modelled by a CRP with parameters $(1-\beta,1-2\beta)$, see Proposition \ref{Marchal mass distribution prop}. Furthermore, it does not depend on the times at which the leaf is added. Since we are conditioning over an almost surely finite number of discrete random variables, we may drop conditioning on $\mathcal{F}_T$. This implies that the sigma-field generated by $$\left(N_j \left(N_{\sigma}^{-1}(n)\right), 1 \leq n \leq M \right)_{3 \leq j \leq m}$$ is independent of $\mathcal{F}_{\infty}$ for all $M \geq 1$. Let $M \rightarrow \infty$ to conclude that the sigma-field generated by $\left(N_j \left(N_{\sigma}^{-1}(n)\right), n \geq 1\right)_{3 \leq j \leq m}$ is independent of $\mathcal{F}_{\infty}$, as desired. Since we may rewrite equation \eqref{Marchal mass distrn prop EQN 2} to get
$$ \frac{n}{N^{-1}_{\sigma}(n)} \overset{a.s.}{\longrightarrow} X_3 \quad \text{ as } n \rightarrow \infty,$$
we conclude that $X_3$ is $\mathcal{F}_{\infty}$-measurable. Likewise, $X_0, X_1$ and $X_2$ are $\mathcal{F}_{\infty}$-measurable. From \eqref{Marchal mass distrn prop EQN 1}, 
$$ \frac{N_{i+2}\left(N_{\sigma}^{-1}(n)\right)}{n} \overset{a.s.}{\longrightarrow} P_i \quad \text{ as } n \rightarrow \infty,$$
for all $i \geq 1$. So, $(P_i, i \geq 1)$ is measurable with respect to the sigma-field generated by $\left(N_j \left(N_{\sigma}^{-1}(n)\right), n \geq 1\right)_{j \geq 3}$. The desired result follows. 
\end{proof}
We finally obtain the main result regarding the self-similarity of Marchal's algorithm, which proves the self-similarity property of $\alpha$-stable trees when decomposed at the first branch point. 

\begin{theorem} \label{Marchal Main Theorem}
For any $\alpha \in (1,2]$, the limiting trees $(\tau_i, i \geq 0)$ in Marchal's algorithm are independent. Furthermore, they are independent of their scaling factors. For each subtree $\tau_i^{(n)}$, let $d_i^{(n)}$ denote the graph distance and $\mu_i^{(n)}$ the empirical mass measure on its leaves.  
\begin{enumerate}
\item[\rm{(i)}] If $\alpha = 2$, for each $i \in \lbrace 0,1,2 \rbrace$, we have the convergence 
$$ \left(\tau_i^{(n)},\frac{d_i^{(n)}}{2 n^{1/2}},\mu_i^{(n)}\right) \overset{a.s.}{\longrightarrow} \left( \tau_i,\xi_i^{1/2}d^{(\infty)}_i, \xi_i\mu^{(\infty)}_i \right) \quad \text{ as } n \rightarrow \infty,$$ in the Gromov--Hausdorff--Prokhorov topology, where $( \tau_i, d^{(\infty)}_i, \mu^{(\infty)}_i ), i \geq 1$, are i.i.d.\ with  $$\left( \tau_i, d^{(\infty)}_i, \mu^{(\infty)}_i\right)\,{\buildrel d \over =}\, \left(\mathcal{T}_{2},d_{2},\mu_{2}\right), \quad i=0,1,2,$$ and $\left(\xi_0,\xi_1,\xi_2\right) \sim \textnormal{Dir}\left(1/2,1/2,1/2\right)$ is
independent of $(\tau_0,\tau_1,\tau_2)$. 
\item[\rm{(ii)}] If $\alpha \in (1,2)$, for each $i \geq 0$, we have the convergence 
$$ \left(\tau_i^{(n)},\frac{d_i^{(n)}}{\alpha n^{\beta}},\mu_i^{(n)}\right) \overset{a.s.}{\longrightarrow} \left( \tau_i, \xi_i^\beta d^{(\infty)}_{i}, \xi_i\mu^{(\infty)}_{i} \right) \quad \text{ as } n \rightarrow \infty,$$ in the Gromov--Hausdorff--Prokhorov topology, 
where $( \tau_i, d^{(\infty)}_i, \mu^{(\infty)}_i ), i \geq 1,$ are i.i.d.\ with  $$\left( \tau_i, d^{(\infty)}_i, \mu^{(\infty)}_i\right)\,{\buildrel d \over =}\, \left(\mathcal{T}_{\alpha},d_{\alpha},\mu_{\alpha}\right), \quad i \geq 0,$$ 
and $\xi_i = X_i$ for $i \in \lbrace 0,1,2 \rbrace$ and $\xi_{j+2} = X_3 P_j$ for $ j \geq 1$, with $\left(X_0,X_1,X_2,X_3\right) \sim \textnormal{Dir}\left(\beta,\beta,\beta,1-2\beta\right)$ and $(P_i,i \geq 1)^{\downarrow} \sim \textnormal{PD}(1-\beta,1-2\beta)$ independent. 
\end{enumerate}
\end{theorem}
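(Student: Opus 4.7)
The plan is to combine the structural results already established in Propositions \ref{Marchal mass distribution prop}--\ref{Marchal independence SCALING FACTOR theorem} with the Gromov--Hausdorff--Prokhorov convergence of Marchal's algorithm itself (Proposition \ref{Marchal GHP Curien and Haas Theorem}), applying the latter to each subtree along its own transition times and then switching to the natural time index via a random reparametrization. Throughout I work on the probability space supporting Marchal's algorithm, so that convergence holds almost surely and not merely in distribution.

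First I would record the distributional identities and independence statements. By Proposition \ref{Marchal independence Theorem}, the subtrees $(\tau_i, i \geq 0)$ are i.i.d.\ with the law of $\mathcal{T}_\alpha$ and the sigma-field they generate is independent of $\sigma(N_j(n), n \geq 1, j \geq 0)$. Each scaling factor $\xi_i$ is obtained as an a.s.\ limit of ratios of leaf counts: Proposition \ref{Marchal mass distribution prop}(i) handles $\alpha = 2$, and Proposition \ref{Marchal mass distribution prop}(ii) together with Proposition \ref{Marchal independence SCALING FACTOR theorem} handle $\alpha \in (1,2)$. In particular $(\xi_i, i \geq 0)$ is measurable with respect to $\sigma(N_j(n), n \geq 1, j \geq 0)$, which simultaneously identifies its joint distribution as asserted and gives its independence from $(\tau_i, i \geq 0)$.

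It remains to upgrade from the identification of the limits $\tau_i$ to the GHP convergence with the correct scaling. Let $\widetilde{\mu}_i^{(n)}$ denote the probability empirical measure on the $N_i(n)$ leaves of $\tau_i^{(n)}$, so that $\mu_i^{(n)} = \bigl(N_i(n)/(n+2)\bigr)\widetilde{\mu}_i^{(n)}$. Since $\tau_i^{(N_i^{-1}(k))}$ is a genuine realization of Marchal's algorithm with $k$ leaves (Proposition \ref{Marchal independence Theorem}), Proposition \ref{Marchal GHP Curien and Haas Theorem} gives a.s.\ GHP convergence
\[
\left(\tau_i^{(N_i^{-1}(k))}, \frac{d_i^{(N_i^{-1}(k))}}{\alpha k^{\beta}}, \widetilde{\mu}_i^{(N_i^{-1}(k))}\right) \longrightarrow \left(\tau_i, d_i^{(\infty)}, \mu_i^{(\infty)}\right) \quad \text{as } k \to \infty.
\]
The subtree, the graph distance and the probability empirical measure are constant on each interval $[N_i^{-1}(k), N_i^{-1}(k+1))$, and $N_i(n) \to \infty$ a.s.\ by Proposition \ref{Marchal mass distribution prop}, so the substitution $k = N_i(n)$ converts this into a.s.\ GHP convergence of $\bigl(\tau_i^{(n)}, d_i^{(n)}/(\alpha N_i(n)^\beta), \widetilde{\mu}_i^{(n)}\bigr)$ to $(\tau_i, d_i^{(\infty)}, \mu_i^{(\infty)})$ as $n \to \infty$. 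Exploiting continuity of the GHP metric under multiplication of distances and of total mass by positive constants, and using the a.s.\ limits $(N_i(n)/n)^{\beta} \to \xi_i^{\beta}$ and $N_i(n)/(n+2) \to \xi_i$ from Proposition \ref{Marchal mass distribution prop}, yields the claimed convergence with the distance rescaled by $\alpha n^{\beta}$ and the sub-probability mass measure $\mu_i^{(n)}$ in place of $\widetilde{\mu}_i^{(n)}$. A countable intersection argument makes the a.s.\ convergence simultaneous over all $i$.

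The main technical obstacle is the random time change: Proposition \ref{Marchal GHP Curien and Haas Theorem} is naturally indexed by the \emph{number of leaves of the subtree}, whereas the theorem is indexed by the \emph{total number of leaves} in $\mathbf{T}(n+2)$. The interplay between the two clocks is precisely what forces the scaling factor $\xi_i^{\beta}$ to appear on the distances and $\xi_i$ on the masses. Once the time change and the continuity of the GHP metric under rescalings are in place, both parts (i) for $\alpha = 2$ (three subtrees, Dirichlet split) and (ii) for $\alpha \in (1,2)$ (infinitely many subtrees, GEM/PD split inside the infinite-degree atom) follow from the same argument, the only difference being the already computed joint law of $(\xi_i, i \geq 0)$.
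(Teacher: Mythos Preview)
Your proposal is correct and follows essentially the same approach as the paper: combine Proposition \ref{Marchal GHP Curien and Haas Theorem} (applied to each subtree along its own transition times) with the a.s.\ limits of $N_i(n)/n$ from Proposition \ref{Marchal mass distribution prop}, and read off the independences and the law of $(\xi_i,i\ge 0)$ from Propositions \ref{Marchal independence Theorem} and \ref{Marchal independence SCALING FACTOR theorem}. The paper's proof is terser and leaves the random time-change $k=N_i(n)$ implicit, whereas you spell it out explicitly, but the underlying argument is the same.
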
 
\begin{proof} The almost sure convergence in the rescaled subtrees arises by applying Proposition \ref{Marchal GHP Curien and Haas Theorem} and Proposition \ref{Marchal mass distribution prop}. The independence between the limiting subtrees comes immediately from Theorem \ref{Marchal independence Theorem}. The arguments in Lemma \ref{Marchal total weight prop} and Proposition \ref{Marchal mass distribution prop} show that the limiting proportion of weights is measurably constructed from $(N_i(n), n \geq 1, i \geq 0)$. Hence, by Theorem \ref{Marchal independence Theorem}, the limiting subtrees are independent of their scaling factors. The distribution of $(\xi_i, i \geq 0)$ for $\alpha \in (1,2)$ is fully specified by Proposition \ref{Marchal independence SCALING FACTOR theorem}.  
\end{proof} 

The results of Theorem \ref{Marchal Main Theorem} agree with similar decompositions of the BCRT at a branch point in Aldous \cite[Theorem~2]{aldous1994recursive}, Albenque and Goldschmidt \cite[Section~1.4]{ag15}, and Croydon and Hambly \cite[Lemma~6]{ch08}, where the branch point is uniquely determined by a uniformly chosen point according to the mass measure within each of the three subtrees. We point out that Albenque and Goldschmidt deal with an unrooted BCRT, while Croydon and Hambly's construction uses a doubly-marked rooted BCRT. Our construction thus far does not require a notion of a mass measure (even though we have chosen to include the mass measure in our statements), but rather a single marked point in each subtree.

\subsection{Formal specification of the concatenation operation}

After verifying that the subtrees $(\tau_i, i \geq 0)$ are rescaled versions of $\mathcal{T}_{\alpha}$ in the limit with the required independences, the next step is to show that the concatenation operation induced by Marchal's algorithm is well-defined and measurable as an operation on $\mathbb{T}_{\rm m}$. To this end, we adapt the general setup and terminology from \cite{rw16}.
Let\vspace{-0.2cm} $$\Xi := \bigg\lbrace (x_0,x_1,x_2,x_3 p_j, j \!\geq\! 1)\colon x_0, x_1, x_2, x_3 \!\geq\! 0,\,\sum_{i=0}^{3} x_i \!=\! 1,\, p_1 \!\geq\! p_2 \!\geq\! \cdots \!\geq\! 0,\,\sum_{j=1}^{\infty} p_j \!=\! 1 \bigg\rbrace.\vspace{-0.2cm}$$ For notational convenience, we write
$ 
    \xi_i=\left\{ 
\begin{array}{ll}
    x_i& \text{if } i \in \lbrace 0,1,2 \rbrace,\\
    x_3 p_{i-2} & \text{otherwise}.
\end{array}\right.\vspace{0.1cm}
$
 
Set $\Xi^{*} := \Xi \times \mathbb{T}_{\rm m}^{\infty}$ as in \eqref{RDE prodspace} and recall that $\mathbb{T}_{\rm m}$ is the set of ${\rm GH}^{\rm m}$-equivalence classes of marked compact rooted $\mathbb{R}$-trees.
Note that in our case, $N = \inf\lbrace i \geq 0\colon \xi_{i+1}=0 \rbrace$ with the convention $\inf \emptyset = \infty$, so that $N$ is a function of $\xi = \left(\xi_i, i \geq 0\right) \in \Xi$. Furthermore, in the case of $\alpha$-stable trees, recall that $N = 2$ or $N = \infty$ almost surely. Hence, we drop dependence on $N$ in our notation. For $\beta \in \left(0,\frac{1}{2}\right]$, equip $\Xi^{*}$ with the metric\vspace{-0.1cm}
\begin{equation} \label{Xi* metric}
d_{\beta}\left(\left(\xi,\tau_i,i\!\geq\!0\right),\left(\xi',\tau'_i, i\!\geq\!0\right)\right):=\sup_{i\geq0} \left(\lvert \xi_i^{\beta} \!-\! {\xi'_i}^{\beta} \rvert \vee d_{\rm GH}^{\rm m}\left(\tau_i,{\tau'_i}\right) \vee d_{\rm GH}^{\rm m}\left(\xi_i^{\beta}\tau_i,{\xi'_i}^{\beta}{\tau'_i}\right) \right),\vspace{-0.1cm}
\end{equation}
where $\xi = \left(\xi_i, i\geq 0\right) \in \Xi$, $\xi' = \left(\xi_i', i \geq 0 \right)\in \Xi$, and $(\tau_i,d_i,\rho_i,x_i), ({\tau'_i},d',{\rho'_i},{x'_i})$ are representatives of ${\rm GH}^{\rm m}$-equivalence classes in $\mathbb{T}_{\rm m}$, with shorthand $\xi_i^\beta\tau_i$ meaning that all distances of $\tau_i$ are reduced by the factor $\xi_i^\beta$. However, as $d_{\rm GH}^{\rm m}$ only depends on ${\rm GH}^{\rm m}$-equivalence classes, our metric $d_{\beta}$ also only depends on ${\rm GH}^{\rm m}$-equivalence classes. Hence, we may define $d_{\beta}$ on $\Xi^{*}$ and denote by $\tau$ any representative of the ${\rm GH}^{\rm m}$-equivalence class of $(\tau,d,\rho,x)$. 

\begin{proposition} \label{Xi* Polish prop}
$\left(\Xi^{*},d_{\beta}\right)$ is a Polish metric space.
\end{proposition}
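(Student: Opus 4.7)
The plan is to verify that $d_\beta$ is a metric, then to establish separability and completeness of $(\Xi^*, d_\beta)$.

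\textbf{Metric axioms.} Non-negativity and symmetry of $d_\beta$ are immediate from the definition via $\vee$ and $\sup$. For the identity of indiscernibles, if $d_\beta((\xi,\tau_i)_i,(\xi',\tau'_i)_i)=0$ then for each $i$ we have $\xi_i^\beta = (\xi'_i)^\beta$ (hence $\xi_i = \xi'_i$ since $\beta > 0$) and $d_{\rm GH}^{\rm m}(\tau_i,\tau'_i)=0$ (hence $\tau_i = \tau'_i$ as elements of $\mathbb{T}_{\rm m}$). The triangle inequality follows by applying the triangle inequalities for $|\cdot|$ and $d_{\rm GH}^{\rm m}$ termwise inside each $\vee$-clause and noting that $\sup$ respects pointwise inequalities.

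\textbf{Separability.} Using that $(\mathbb{T}_{\rm m}, d_{\rm GH}^{\rm m})$ is Polish, I pick a countable dense subset $\mathcal{D}\subset\mathbb{T}_{\rm m}$ containing the one-point tree. I also pick a countable subset $\mathcal{Q}\subset\Xi$ of sequences with rational entries and finite support. I form $\mathcal{S}\subset\Xi^*$ as the countable collection of pairs $(\xi,(\tau_i))$ with $\xi\in\mathcal{Q}$, $\tau_i\in\mathcal{D}$ for $i$ in the support of $\xi$, and $\tau_i$ fixed as the singleton tree otherwise. To approximate an arbitrary $(\eta,(\sigma_i))\in\Xi^*$ within $\varepsilon$: pick $k$ such that $\sum_{i\geq k}\eta_i < \varepsilon^{1/\beta}$, giving $\sup_{i\geq k}\eta_i^\beta<\varepsilon$ and small rescaled-tree contributions in the tail; approximate $(\eta_0,\ldots,\eta_{k-1})$ by rationals in $\mathcal{Q}$; and approximate each $\sigma_0,\ldots,\sigma_{k-1}$ within $\varepsilon$ in $d_{\rm GH}^{\rm m}$ by elements of $\mathcal{D}$.

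\textbf{Completeness.} Given a $d_\beta$-Cauchy sequence $((\xi^{(n)},(\tau_i^{(n)})))_n$, for each fixed $i$ both $\xi_i^{(n)\beta}$ is Cauchy in $[0,1]$ and $\tau_i^{(n)}$ is Cauchy in $(\mathbb{T}_{\rm m}, d_{\rm GH}^{\rm m})$, with Cauchy constants uniform in $i$ by the sup structure. Let $\xi_i, \tau_i$ denote the resulting pointwise limits. One verifies $\xi=(\xi_i)_{i\geq0}\in\Xi$ by passing the defining constraints (probability sum and monotonicity of the $p_j$-component) to the limit using Fatou/dominated convergence. Then $d_\beta((\xi^{(n)},(\tau_i^{(n)})),(\xi,(\tau_i)))\to 0$ follows from combining the uniform pointwise convergences with the Cauchy hypothesis applied inside the sup.

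\textbf{Main obstacle.} The separability step is the hardest, since sup-metric structures on infinite products of the form $\mathbb{T}_{\rm m}^{\infty}$ are typically not separable. The argument crucially exploits the constraint $\sum_i \xi_i = 1$ inherited from $\Xi$, which forces $\xi_i\to 0$ and tames the rescaled-tree contribution $d_{\rm GH}^{\rm m}(\xi_i^\beta \tau_i, (\xi'_i)^\beta \tau'_i)$ in the tail. The residual difficulty posed by the unscaled term $d_{\rm GH}^{\rm m}(\tau_i,\tau'_i)$ in the sup is handled by the implicit convention --- inherent in identifying tuples whose $\xi$-coordinates agree on the support --- that trees indexed beyond the effective support of $\xi$ do not meaningfully distinguish elements of $\Xi^*$; this closes the argument.
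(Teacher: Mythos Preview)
The paper's proof is a one-line reference to \cite[Proposition~3.1]{rw16}, so there is no detailed argument here to compare against. Your treatment of the metric axioms and of completeness is broadly reasonable. The genuine gap is in separability.

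The metric $d_\beta$ contains the \emph{unscaled} term $d_{\rm GH}^{\rm m}(\tau_i,\tau'_i)$ inside a supremum over all $i\ge 0$. This forces the induced topology on the tree component to be at least as fine as the uniform (sup-)metric on $\mathbb{T}_{\rm m}^\infty$. Since $(\mathbb{T}_{\rm m},d_{\rm GH}^{\rm m})$ is unbounded---line segments of integer lengths are pairwise at distance at least $1/2$---an infinite product under the sup-metric is not separable: for any two fixed trees $\tau,\tau'$ with $d_{\rm GH}^{\rm m}(\tau,\tau')\ge 1$, the uncountable family of sequences $(\tau_i)_{i\ge 0}$ with $\tau_i\in\{\tau,\tau'\}$ is pairwise $1$-separated. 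Your countable set $\mathcal S$, which fixes all trees beyond a finite support to the one-point tree, therefore cannot be dense: for indices $i$ beyond the support, the contribution $d_{\rm GH}^{\rm m}(\sigma_i,\text{point})$ need not be small, regardless of how small $\eta_i$ is.

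You see this obstruction and try to dissolve it via an ``implicit convention'' identifying elements that agree on the support of $\xi$. But no such identification appears in the definition $\Xi^*:=\Xi\times\mathbb{T}_{\rm m}^\infty$, and asserting it is exactly the step that needs proof. To close the argument you must either (i) consult \cite{rw16} and check whether the analogous metric there omits or weights the unscaled term, or restricts the tree coordinates, and carry that adjustment over; or (ii) pass explicitly to the quotient you have in mind, verify that $d_\beta$ descends to a metric on it, and then prove that quotient Polish. As it stands, the separability claim is not established.
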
  
\begin{proof} This can be proved following the lines of the proof of \cite[Proposition~3.1]{rw16}.
\end{proof}

We now formally define our concatenation operator. Let $\xi \in \Xi$ and let $\left(\tau_i,d_i,\rho_i,x_i\right)$ be representatives of ${\rm GH}^{\rm m}$-equivalence classes in $\mathbb{T}_{\rm m}$ for $i \geq 0$. Define the \textit{concatenated tree} $\left(\tau',d',\rho',x'\right)$ as follows. 

\begin{enumerate}
\item Let $\widetilde{\tau}' := \coprod_{i\geq0} \tau_i$ be the disjoint union of trees. Let $\sim_{c}$ be the equivalence relation on $\widetilde{\tau}'$ in which $\rho_i \sim_{c} x_0$ for all $i \geq 1$. Define $\tau' := \widetilde{\tau}'/\sim_{c}$. Write $\psi_{c}$ for the canonical projection from $\widetilde{\tau}'$ onto $\tau'$. 
\item Define $d'$ as the metric induced on $\tau'$ under $\psi_c$ by the metric $\widetilde{d}'$ on $\widetilde{\tau}'$ such that 
\begin{equation} \label{concatenation metric}
    \widetilde{d}'(u,v)= 
\begin{cases}
    \xi^{\beta}_i d_i(u,v)& \text{if } u,v \in \tau_i, i \geq 0,\\
    \xi^{\beta}_0 d_0(u,x_0)+\xi^{\beta}_j d_j(\rho_j,v)& \text{if } u \in \tau_0 \text{ and } v \in \tau_j, j \neq 0, \\
    \xi^{\beta}_i d_i(u,\rho_i) + \xi^{\beta}_0 d_0(x_0,v) & \text{if } u \in \tau_i \text{ and } v \in \tau_0, i \neq 0,\\
    \xi^{\beta}_i d_i(u,\rho_i)+\xi^{\beta}_j d_j(\rho_j,v) & \text{if } u \in \tau_i \text{ and } v \in \tau_j, i,j \neq 0.\\
\end{cases}
\end{equation}
%Let $d'$ be the metric induced on $\tau'$ under $\psi_{c}$ by $\widetilde{d}'$.
\item Retain $x' = \psi_{c}\left(x_1\right)$ as our marked point in $\tau'$ and set $\rho' = \psi_{c}\left(\rho_0\right)$ as the root of $\tau'$. 
\end{enumerate}
We illustrate this construction in Figure \ref{fig:dissertConcatenation}. \pagebreak

\begin{figure}[t]
  $\;$\hfill\includegraphics[width=12cm]{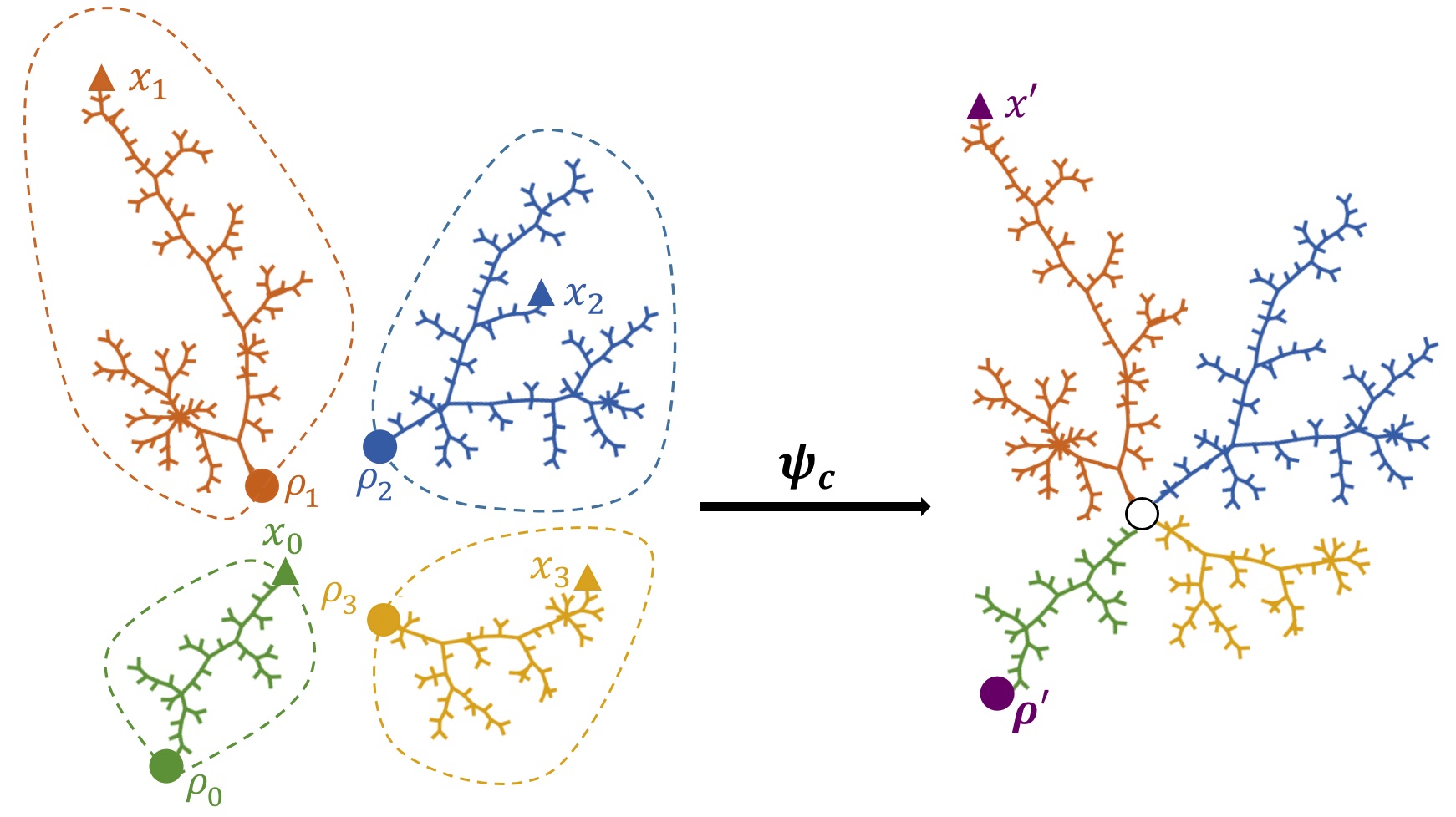}\hfill$\;$
  \caption{Construction of concatenated tree from 4 marked trees, rescaling is not shown; extracted from simulations (original simulations courtesy of Igor Kortchemski).}
  \label{fig:dissertConcatenation}
\end{figure}

By virtue of this construction, the ${\rm GH}^{\rm m}$-equivalence class of $\left( \tau',d',\rho',x' \right)$ only depends on the ${\rm GH}^{\rm m}$-equivalence classes of $\left(\tau_i,d_i,\rho_i,x_i\right)$ for $i \geq 0$. Thus, it makes sense to define $C_{\beta} \subseteq \Xi^{*}$ as the set of elements $\kappa = \left(\xi,\tau_i, i \geq 0\right) \in \Xi^{*}$ such that the concatenated tree $\left(\tau',d',\rho',x'\right)$ formed by any equivalence class representatives of $\left(\left(\tau_i,d_i,\rho_i,x_i\right), i \geq 0\right)$ is compact. Equip $\mathbb{T}_{\rm m}$ and $\Xi^{*}$ with their respective Borel sigma-algebras, $\mathcal{B}(\mathbb{T}_{\rm m})$ and $\mathcal{B}(\Xi^{*})$. The \textit{concatenation operator} $g_{\beta}\colon \Xi^{*} \rightarrow \mathbb{T}_{\rm m}$ is, 
\begin{equation} \label{concatenation operator}
    g_{\beta}(\kappa)= 
\begin{cases}
    \left(\tau',d',\rho',x'\right)& \text{if } \kappa\in C_{\beta},\\
    \left(\left\lbrace x' \right\rbrace,0,x',x'\right) & \text{otherwise},
\end{cases}
\end{equation} 
where $(\lbrace x' \rbrace,0,x',x')$ denotes the equivalence class of a trivial one-point rooted tree.  

\begin{proposition} \label{measurability prop}
The map $g_{\beta}\colon \Xi^{*} \rightarrow \mathbb{T}_{\rm m}$ is $\mathcal{B}(\Xi^{*})$-measurable. 
\end{proposition}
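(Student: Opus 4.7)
My plan is to approximate $g_\beta$ by finite-truncation maps $g_\beta^{(n)}$ that concatenate only the first $n+1$ trees, establish that each $g_\beta^{(n)}$ is continuous, and deduce that $g_\beta$ is a pointwise limit of continuous functions on a Borel subset of $\Xi^*$.

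For $n\ge 1$, define $g_\beta^{(n)}\colon\Xi^*\to\mathbb{T}_{\rm m}$ by the same construction as $g_\beta$ but using only $\tau_0,\tau_1,\ldots,\tau_n$ and the scalings $\xi_0^\beta,\ldots,\xi_n^\beta$; since this is a finite gluing of compact trees at the marked point $x_0$, the output is automatically compact and well-defined. To show $g_\beta^{(n)}$ is continuous (indeed, Lipschitz with constant $2(n+1)$) from $(\Xi^*,d_\beta)$ to $(\mathbb{T}_{\rm m},d_{\rm GH}^{\rm m})$, I would proceed as follows. Given two inputs $\kappa=(\xi,\tau_i,\,i\ge 0)$ and $\kappa'=(\xi',\tau_i',\,i\ge 0)$, pick for each $i=0,\ldots,n$ a near-optimal marked correspondence $\mathcal{R}_i$ between $\xi_i^\beta\tau_i$ and $\xi_i'^\beta\tau_i'$, whose distortion is at most $2d_\beta(\kappa,\kappa')$ by \eqref{marked distortion} and the $d_{\rm GH}^{\rm m}(\xi_i^\beta\tau_i,\xi_i'^\beta\tau_i')$ term in \eqref{Xi* metric}. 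Push each $\mathcal{R}_i$ forward under the canonical projections $\psi_c,\psi_c'$ onto the two concatenated trees and take the union (which contains the root pair $(\rho_0,\rho_0')$ and the marked-point pair $(x_1,x_1')$). Using \eqref{concatenation metric} and the triangle inequality routed through the identified attachment points, the distortion of this combined correspondence is bounded by $2(n+1)d_\beta(\kappa,\kappa')$.

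Next, I would verify that $C_\beta$ is Borel and that $g_\beta^{(n)}\to g_\beta$ pointwise on $C_\beta$. Since the concatenated tree is built by gluing countably many compact trees at a single point, it is compact if and only if $\xi_i^\beta\,\textnormal{ht}(\tau_i)\to 0$ as $i\to\infty$ (so that only finitely many rescaled components have diameter exceeding any given $\varepsilon>0$). The height functional $(\tau,d,\rho,x)\mapsto\textnormal{ht}(\tau)$ is continuous on $\mathbb{T}_{\rm m}$ by \eqref{marked distortion} applied to pairs involving the root, so each $\kappa\mapsto\xi_i^\beta\,\textnormal{ht}(\tau_i)$ is continuous on $\Xi^*$ and $C_\beta=\{\kappa\colon\limsup_i\xi_i^\beta\,\textnormal{ht}(\tau_i)=0\}$ is Borel. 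For $\kappa\in C_\beta$, the truncation $g_\beta^{(n)}(\kappa)$ sits isometrically inside $g_\beta(\kappa)$ as a marked rooted subtree, and a direct Hausdorff estimate yields $d_{\rm GH}^{\rm m}(g_\beta^{(n)}(\kappa),g_\beta(\kappa))\le\sup_{i>n}\xi_i^\beta\,\textnormal{ht}(\tau_i)\to 0$.

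Combining these, $g_\beta$ restricted to $C_\beta$ is a pointwise limit of continuous maps and hence Borel, while $g_\beta$ restricted to $C_\beta^c$ is the constant one-point tree and hence Borel; as $C_\beta$ is itself Borel, this yields $\mathcal{B}(\Xi^*)$-measurability of $g_\beta$ on all of $\Xi^*$. The main obstacle lies in the continuity argument for $g_\beta^{(n)}$: one must verify that the componentwise correspondences push forward into a genuine marked correspondence between the two concatenated trees, and that distortion for cross-component pairs $(u,v)$ with $u$ originating in $\xi_i^\beta\tau_i$ and $v$ in $\xi_j^\beta\tau_j$, $i\ne j$, is controlled by the componentwise distortions through the attachment-point route. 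The simultaneous presence of $|\xi_i^\beta-\xi_i'^\beta|$ and $d_{\rm GH}^{\rm m}(\xi_i^\beta\tau_i,\xi_i'^\beta\tau_i')$ in \eqref{Xi* metric} is essential precisely when some $\xi_i$ is near $0$, where the unscaled $d_{\rm GH}^{\rm m}(\tau_i,\tau_i')$ alone carries no useful information for the concatenated output.
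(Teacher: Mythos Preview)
Your proposal is correct and follows precisely the approach one would expect when adapting \cite[Proposition~3.2]{rw16}, which is all the paper's proof does: truncate to the first $n+1$ components, show the truncated maps are continuous via unions of componentwise correspondences and the distortion formula \eqref{marked distortion}, identify $C_\beta$ as the Borel set $\{\kappa\colon\xi_i^\beta\,\textnormal{ht}(\tau_i)\to 0\}$, and pass to the pointwise limit. One small remark: your Lipschitz constant $2(n+1)$ is overly generous---since any cross-component distance in \eqref{concatenation metric} involves at most two summands routed through the gluing point, the distortion of the combined correspondence is bounded by $2\max_i\textnormal{dis}(\mathcal{R}_i)$, giving a uniform Lipschitz constant of $2$ independent of $n$; but this does not affect the argument.
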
 
\begin{proof} The proof can be adapted from \cite[Proposition~3.2]{rw16}.\end{proof}

\subsection{First main result: the RDE satisfied by the stable tree}

We now deduce our main theorem in this section. 
\begin{theorem} \label{RDE for alpha-stable tree THM} 
The marked $\alpha$-stable tree $(\mathcal T_\alpha, d_\alpha, \mu_\alpha, x_\alpha)$ with $x_\alpha \sim \mu_\alpha$ satisfies the RDE
\begin{equation} \label{RDE for alpha-stable tree}
\mathcal{T}_{\alpha} \overset{d}{=} g_{\beta}\left(\xi,\mathcal{T}_i,i \geq 0\right) \end{equation} 
on $\mathbb{T}_{\rm m}$, where $(\mathcal{T}_i, i \geq 0)$ is a sequence of independent copies of $\mathcal T_\alpha$, independent of $\xi = \left(X_0,X_1,X_2,X_3P_j, j \geq 1 \right) \in \Xi$, and where the following holds.
\begin{itemize}
\item If $\alpha = 2$, then $\xi_{j+2} = X_3P_j = 0$ almost surely for all $j \geq 1$ and $\left(X_0,X_1,X_2\right) \sim \textnormal{Dir}\left(1/2,1/2,1/2\right)$.
\item If $\alpha \in (1,2)$, then $\left(X_0,X_1,X_2,X_3\right) \sim \textnormal{Dir}\left(\beta,\beta,\beta,1-2\beta\right)$ and $(P_j, j \geq 1) \sim \textnormal{PD}(1-\beta,1-2\beta)$, where $\left(X_0,X_1,X_2,X_3\right)$ and $(P_j, j \geq 1)$ are independent.
\end{itemize}
In other words, the law of the marked $\alpha$-stable tree $\varsigma_\alpha^{\rm m}$ satisfies the fixpoint equation 
$$ \eta= \Phi_{\beta}\left(\eta\right) \label{RDEdis} $$
on $\mathcal{P}(\mathbb{T}_{\rm m})$, where $\Phi_{\beta}\colon \mathcal{P}\left(\mathbb{T}_{\rm m}\right) \rightarrow \mathcal{P}\left(\mathbb{T}_{\rm m}\right)$ is the mapping on $\mathcal{P}(\mathbb{T}_{\rm m})$ induced by \eqref{RDE for alpha-stable tree}, and where we recall that $\mathcal{P}(\mathbb{T}_{\rm m})$ denotes the set of Borel probability measures on $\mathbb{T}_{\rm m}$.
\end{theorem}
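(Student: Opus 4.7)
The plan is to realise both sides of \eqref{RDE for alpha-stable tree} as almost sure limits along Marchal's algorithm and then identify them via the joint convergence established in Theorem \ref{Marchal Main Theorem}. I work on the probability space carrying $(\mathbf{T}_\alpha(n))_{n \geq 1}$ started from the $Y$-shape $\mathbf{T}(2)$, taking the first leaf $A_1$ as the distinguished mark, which is $\mu_\alpha$-distributed in the limit by the exchangeability of the leaves inserted by Marchal's algorithm. Two ingredients are already in place: Proposition \ref{Marchal GHP Curien and Haas Theorem}, extended to track $A_1$ as an additional mark (routine given GHP convergence of the whole tree together with the fact that $A_1$ is present in every $T_\alpha(n+2)$), gives
$$\left(T_\alpha(n+2),d_n/(\alpha n^\beta),\rho_n,A_1\right)\overset{a.s.}{\longrightarrow}\left(\mathcal T_\alpha,d_\alpha,\rho_\alpha,x_\alpha\right)$$
in marked GHP; and Theorem \ref{Marchal Main Theorem} delivers, jointly and almost surely, $\tau_i^{(n)}/(\alpha N_i(n)^\beta) \to \mathcal T_i$ in ${\rm GH}^{\rm m}$ for every $i \geq 0$ and $N_i(n)/n \to \xi_i$, with $(\mathcal T_i, i \geq 0)$ i.i.d.\ copies of the marked $\alpha$-stable tree independent of $\xi$, and $\xi$ having precisely the Dirichlet and Poisson--Dirichlet components prescribed in the statement.

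The bridge between these two ingredients is the observation that, at every finite $n$, the tree $T_\alpha(n+2)$ is literally the concatenation at $V_2$ of its subtrees $\tau_i^{(n)}$, $0 \leq i \leq K_n+2$, where the mark of $\tau_0^{(n)}$ is the attachment vertex $V_2$ and the mark of $\tau_1^{(n)}$ (namely $A_1$) is the mark of the whole tree. Factoring the global scaling $(\alpha n^\beta)^{-1} = (N_i(n)/n)^\beta \cdot (\alpha N_i(n)^\beta)^{-1}$ subtree by subtree yields the a.s.\ identity
\begin{equation*}
\frac{T_\alpha(n+2)}{\alpha n^\beta} \;=\; g_\beta\!\left(\xi^{(n)},\,\frac{\tau_i^{(n)}}{\alpha N_i(n)^\beta},\, 0 \leq i \leq K_n + 2\right),
\end{equation*}
where $\xi^{(n)} \in \Xi$ collects the relative leaf-counts $N_i(n)/n$ with the tail reordered by decreasing size. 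Taking $n \to \infty$ and invoking continuity of $g_\beta$ at the limiting point of $(\Xi^*, d_\beta)$ makes the right-hand side converge a.s.\ to $g_\beta(\xi, \mathcal T_i, i \geq 0)$, while the left-hand side converges to $\mathcal T_\alpha$. Uniqueness of ${\rm GH}^{\rm m}$-limits then forces $\mathcal T_\alpha = g_\beta(\xi, \mathcal T_i, i \geq 0)$ a.s., whence the distributional RDE and the fixpoint equation $\varsigma_\alpha^{\rm m} = \Phi_\beta(\varsigma_\alpha^{\rm m})$ on $\mathcal{P}(\mathbb{T}_{\rm m})$.

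The main obstacle is this last limit step when $\alpha \in (1,2)$, because the metric $d_\beta$ on $\Xi^*$ is a supremum over all indices while Theorem \ref{Marchal Main Theorem} only delivers convergence pointwise in $i$, and $g_\beta$ is \emph{a priori} merely measurable (Proposition \ref{measurability prop}). I would address this by truncation: for each fixed $K$, the concatenation restricted to $0 \leq i \leq K$ converges a.s.\ in ${\rm GH}^{\rm m}$ by a direct correspondence argument on finite concatenations, while the tail contribution to any such correspondence is bounded uniformly in $n$ by $\sup_{j > K} (\xi^{(n)}_j)^\beta \cdot \mathrm{ht}(\tau_j^{(n)}/(\alpha N_j(n)^\beta))$, which vanishes as $K \to \infty$ thanks to Poisson--Dirichlet tail decay together with tightness of the rescaled subtree heights; the latter follows either from ${\rm GH}^{\rm m}$-convergence of each subtree to a compact limit or from a dominating-CRT argument in the spirit of the auxiliary construction mentioned in the introduction.
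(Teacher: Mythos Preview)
Your approach is in the same spirit as the paper's: both realise $\mathcal{T}_\alpha$ and the subtrees $(\tau_i)_{i\ge 0}$ on the common probability space of Marchal's algorithm, invoke Theorem~\ref{Marchal Main Theorem} for the independences and the distribution of $\xi$, and identify $\mathcal{T}_\alpha$ as the concatenation at $V_2$. The difference lies only in how the concatenation identity is established.

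You write the identity at each finite $n$ and then try to pass to the limit through a continuity-plus-truncation argument for $g_\beta$. The paper sidesteps this entirely. Since $(\mathcal{T}_\alpha,d_\alpha)$ is \emph{constructed} as the completion $(\overline{\mathbf{T}_\alpha(\infty)},d)$ (Proposition~\ref{Marchal T_inf metric}) and each $\tau_i$ is the completion of $\tau_i^{(\infty)}$ inside that same metric space, cutting $\mathcal{T}_\alpha$ at $V_2$ already yields the $\tau_i$ equipped with the restricted metric $\xi_i^\beta d_i^{(\infty)}$. Gluing them back at $V_2$ with the stated root/mark conventions is literally $g_\beta(\xi,\tau_i,i\ge 0)$, and the output is $\mathcal{T}_\alpha$ pathwise. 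No limit of concatenations is taken; compactness of the concatenated tree (membership in $C_\beta$) is automatic because the concatenation equals $\mathcal{T}_\alpha$. Hence the ``main obstacle'' you flag---continuity of $g_\beta$ in $d_\beta$ and uniform tail control---never arises in the paper's argument.

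Your truncation sketch is plausible but not complete as stated: pointwise ${\rm GH}^{\rm m}$-convergence of each rescaled subtree does not by itself give uniform-in-$n$ control of $\sup_{j>K}(\xi_j^{(n)})^\beta\,{\rm ht}\big(\tau_j^{(n)}/(\alpha N_j(n)^\beta)\big)$; you would need an extra argument (e.g.\ noting that this quantity equals the height of the $j$-th subtree inside $T_\alpha(n+2)/(\alpha n^\beta)$ and using GH-convergence of the whole tree to bound tails via compactness of $\mathcal{T}_\alpha$, or a dominating-CRT argument as in Section~\ref{secattr}). This is fixable, but heavier than what the paper actually needs.
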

\begin{proof} Recall that for the subtrees involved in the recursive application of Marchal's algorithm, we regarded $V_2$ as a root and marked the first leaf in the $i$-th subtree for each $i \geq 1$. We regarded $A_0$ as the root for the overall tree, and $V_2$ as a marked leaf for the $0$-th subtree. Thus, our construction using Marchal's algorithm agrees with the concatenation operator $g_{\beta}$ acting on the subtrees. Theorem \ref{Marchal Main Theorem} gives the required independences and the distribution of $\xi = \left(\xi_i, i \geq 0\right)$. Proposition \ref{measurability prop} ascertains the measurability of $g_{\beta}$. 
\end{proof}

In general, the marked $\alpha$-stable tree is not the only fixpoint of \eqref{RDE for alpha-stable tree}. Observe that if the metrics $d_i$ of (the representatives of) $\left(\tau_i,d_i,\rho_i,x_i\right) \in \mathbb{T}_{\rm m}$ in \eqref{concatenation metric} were multiplied by some constant $c > 0$, then the concatenated tree will also have its metric $d'$ multiplied by $c$. Furthermore, if the original concatenated tree were a marked compact rooted $\mathbb{R}$-tree, then so would the concatenated tree with metric multiplied by $c$. Thus, since $\left(\mathcal{T}_\alpha,d_\alpha,\rho_\alpha,x_\alpha\right)$ is a distributional fixpoint of \eqref{RDE for alpha-stable tree}, so is $\left(\mathcal{T}_\alpha,c d_\alpha,\rho_\alpha,x_\alpha\right)$ for any $c > 0$. 

\begin{remark}\rm \label{RDE alpha counterexample Prop} 
There also exist solutions to RDE \eqref{RDE for alpha-stable tree} with infinite $1/\beta$-th height moment. This can be shown by 
grafting mass-less length-$y$ branches onto a stable tree with intensity proportional to $y^{-1-1/\beta}dy\mu(dx)$, see e.g. \cite{broutin2016self} 
and \cite{ag15} for such constructions in the context of related RDEs with finite concatenation operations -- the arguments there are not affected 
by the change of setting here. We will establish uniqueness of the solution to  \eqref{RDE for alpha-stable tree} up to multiplication of distances 
by a constant, under suitable constraints on height moments. 
\end{remark}

\section{Uniqueness and attraction for a general RDE on $\mathbb{T}_{\rm m}$}\label{secuniq}

\subsection{An RDE on $\mathbb{T}$ and associated constructions in $\mathbb{T}_{\rm w}$ of \cite{rw16}}\label{secRW}

In \cite{rw16}, we established a recursive construction method for CRTs by successively replacing the atoms of a random \textit{string of beads}, that is, a random interval $[0,L]$ for some $L>0$ equipped with a random discrete probability measure $\mu$, with scaled independent copies of itself. More general versions of the CRT construction using so-called $\textit{generalised strings}$ were established to capture multifurcating  self-similar CRTs. We briefly recap our construction, and refer to \cite{rw16} for more details.

Strings of beads can be represented in the form $([0,l], (x_i)_{i \in I}, (q_i)_{i \in I})$ where $l > 0$ denotes the length of the interval, and $x_i \in [0,l]$, $i \in I$, are distinct and describe the locations of the atoms with respective masses $q_i \geq 0, i \in I$, $\sum_{i \in I}q_i =1$, where $I$ is some countable index set. The concept of a string of beads can be generalised by allowing for non-distinct $x_i$'s% and a mass measure $\lambda$ on $[0,l]$ such that $\lambda([0,l])=1-\sum_{i \in I} q_i$
. We call $\left([0,l], (x_i)_{i \in I}, (q_i)_{i \in I}\right)$ a \textit{generalised string}. 
%In this paper, we will always have $\lambda=0$. In this case, we also refer to $\left([0,l], (x_i)_{i \in I}, (q_i)_{i \in I}\right):=\left([0,l], (x_i)_{i \in I}, (q_i)_{i \in I},0\right)$ as a generalised string.
The following theorem is a (slightly simplified) version of the main result in \cite{rw16}.

\begin{theorem} \label{rec con} \label{moments of T check} Let $\beta \in (0,\infty)$ and $p>1/\beta$. Consider a random generalised string 
  $\zeta=(\check{\mathcal T}_0, (\check{X}^{(0)}_i)_{i \in I },(\check{Q}^{(0)}_i)_{i \in I })$ 
  with length $L >0$ such that $\mathbb{E}[L^p] < \infty$, and atom masses $0\leq \check{Q}^{(0)}_i < 1$ a.s.\ for all $i \in I$ and such that 
  $\sum_{i \in I}\check{Q}^{(0)}_i=1$ a.s.. For $n\geq 0$, to obtain 
  $$\left(\check{\mathcal T}_{n+1}, \left(\check{X}_i^{(n+1)}\right)_{i \in I }, \left(\check{Q}^{(n+1)}_i\right)_{i \in I }\right)$$ 
  conditionally given $(\check{\mathcal T}_{n}, (\check{X}_i^{(n)})_{i \in I}, (\check{Q}^{(n)}_i)_{i \in I})$, attach to each 
  $\check{X}_i^{(n)} \in \check{\mathcal T}_n$ an independent isometric copy of $\zeta$ with metric rescaled by $(\check{Q}^{(n)}_i)^\beta$ and 
  atom masses rescaled by $\check{Q}^{(n)}_i$. 

  Let $\check{\mu}_n=\sum_{i \in I} \check{Q}^{(n)}_i \delta_{\check{X_i}^{(n)}},  n \geq 0.$ Then there exists a random weighted $\mathbb R$-tree 
  $(\check{\mathcal T}, \check{\mu})$ such that 
  $$\lim_{n \rightarrow \infty} \left(\check{\mathcal T}_n, \check{\mu}_n \right)=\left(\check{\mathcal T}, \check{\mu}\right) \qquad \text{a.s.}$$
  in the Gromov--Hausdorff--Prokhorov topology in $\mathbb T_{\rm w}$. Furthermore, 
  $\mathbb{E}[{\rm ht}(\check{\mathcal T})^p]< \infty$ for all $p < p^*:=\sup\{p \geq 1\colon \mathbb{E}[L^p] < \infty\}$. 
\end{theorem}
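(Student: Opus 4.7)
The plan is to first establish that $(\check{\mathcal T}_n, \check\mu_n)_{n \geq 0}$ is almost surely a Cauchy sequence in $(\mathbb T_{\rm w}, d_{\rm GHP})$, then extract the claimed limit. By construction, $\check{\mathcal T}_{n+1}$ is obtained from $\check{\mathcal T}_n$ by attaching at each atom $\check X_i^{(n)}$ an independent isometric copy of $\zeta$ whose length is $(\check Q_i^{(n)})^\beta L^{(n,i)}$ with $L^{(n,i)}$ i.i.d.\ as $L$. This yields the pathwise bound
$$d_{\rm GHP}\!\left(\check{\mathcal T}_n, \check{\mathcal T}_{n+1}\right) \leq \sup_{i \in I} \left(\check Q_i^{(n)}\right)^\beta L^{(n,i)},$$
with the same quantity controlling the Prokhorov shift of the mass measures, since masses at generation $n$ are merely refined at generation $n{+}1$ (total mass preserved). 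The problem thus reduces to summability of these increments.

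The core step is a moment estimate for the tree heights $H_n := {\rm ht}(\check{\mathcal T}_n)$. The recursive construction gives the distributional inequality
$$H_{n+1} \;\overset{d}{\leq}\; L \;+\; \sup_{i \in I}\, (\check Q_i^{(0)})^\beta H_n^{(i)},$$
where $(H_n^{(i)})_{i \in I}$ are i.i.d.\ copies of $H_n$, independent of $(L,(\check Q_i^{(0)})_{i \in I})$. The hypothesis $p > 1/\beta$, i.e.\ $\beta p > 1$, together with $\check Q_i^{(0)} < 1$ a.s.\ and $\sum_i \check Q_i^{(0)} = 1$, gives $(\check Q_i^{(0)})^{\beta p} \leq \check Q_i^{(0)}$ pointwise and strict inequality with positive probability, so
$$\lambda \;:=\; \mathbb E\Big[\sum_{i \in I} (\check Q_i^{(0)})^{\beta p}\Big] \;<\; 1.$$
Applying $\sup_i a_i \leq (\sum_i a_i^p)^{1/p}$ and then Minkowski yields
$$\|H_{n+1}\|_p \;\leq\; \|L\|_p + \lambda^{1/p}\|H_n\|_p,$$
which iterates to the uniform bound $\sup_n \|H_n\|_p \leq \|L\|_p/(1-\lambda^{1/p}) < \infty$.

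From this uniform moment bound, summability of the increments follows readily: conditionally on generation $n$, the $p$-th moment of $\sup_i (\check Q_i^{(n)})^\beta L^{(n,i)}$ is bounded by $\lambda^n \|L\|_p^p$ by the same Jensen/Minkowski argument, and Borel--Cantelli (or a direct sum bound in $L^p$) gives a.s.\ geometric decay, hence absolute summability. Completeness of $(\mathbb T_{\rm w}, d_{\rm GHP})$ then delivers an almost sure limit $(\check{\mathcal T}, \check\mu)$, and Fatou's lemma transfers the uniform moment bound to $\mathbb E[{\rm ht}(\check{\mathcal T})^p] < \infty$. Running the argument for every $p \in (1/\beta, p^*)$ gives the full range of finite moments.

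The main obstacle I expect is the handling of the countable, potentially unbounded family $I$ together with the supremum in the recursion: a naive $\ell^\infty$ estimate fails, and the bound $\sup_i a_i \leq (\sum_i a_i^p)^{1/p}$ only contracts because $\beta p > 1$ forces $\mathbb E[\sum_i (\check Q_i^{(0)})^{\beta p}] < 1$. This is exactly the role of the hypothesis $p > 1/\beta$, and it is what distinguishes the present multi-atom setting from the finite concatenation setups in \cite{ag15, broutin2016self}.
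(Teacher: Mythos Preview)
The paper does not supply its own proof of this theorem: it is quoted as ``a (slightly simplified) version of the main result in \cite{rw16}'', so there is no in-paper argument to compare against. Your outline is essentially the standard route and is correct in its main steps. The contraction coefficient $\lambda=\mathbb{E}\big[\sum_{i\in I}(\check Q_i^{(0)})^{\beta p}\big]<1$ is indeed the engine of the argument; your derivation of $\lambda<1$ from $\beta p>1$, $\check Q_i^{(0)}<1$ a.s.\ and $\sum_i\check Q_i^{(0)}=1$ is right, and in fact yields $\sum_i(\check Q_i^{(0)})^{\beta p}<1$ almost surely, not merely with positive probability. The Minkowski iteration $\|H_{n+1}\|_p\le\|L\|_p+\lambda^{1/p}\|H_n\|_p$ and the increment estimate $\mathbb{E}\big[\big(\sup_i(\check Q_i^{(n)})^\beta L^{(n,i)}\big)^p\big]\le\lambda^{n+1}\|L\|_p^p$ (you wrote $\lambda^n$, a harmless slip since the $\check Q_i^{(n)}$ are products of $n{+}1$ generations of atom masses) are correct, and the passage to the limit via completeness of $(\mathbb{T}_{\rm w},d_{\rm GHP})$ together with Fatou is legitimate, height being Lipschitz for $d_{\rm GH}$. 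One point worth making explicit in a write-up is that the $\check{\mathcal T}_n$ are naturally realised as an increasing sequence in a fixed ambient space (e.g.\ $\ell_1(\mathbb{U})$, as the paper notes just after the theorem), which is what makes your pathwise bound on $d_{\rm GHP}(\check{\mathcal T}_n,\check{\mathcal T}_{n+1})$ immediate and ensures the limit is again an $\mathbb{R}$-tree.
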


The convergence in Theorem \ref{rec con} holds in particular in the Gromov--Hausdorff sense when we omit mass measures. In fact, this construction
is naturally carried out in the Banach space $\ell_1(\bU)$, $\bU:=\bigcup_{n\ge 0}\bN^n$, which is a variant of Aldous's $\ell_1(\bN)$ since 
$\bU$ is countable. So embedded, the convergence holds with respect to the Hausdorff metric (or a Hausdorff--Prokhorov metric) for compact subsets 
(equipped with a probability measure) of $\ell_1(\bU)$, as a consequence of the arguments of \cite{rw16}. %It will be convenient to denote 
In particular, the $\alpha$-stable tree was characterised as the limit in the case of a \textit{$\beta$-generalised string} for 
$\beta=1-1/\alpha \in (0,1/2]$, that is, a generalised string of the form 
$$\left(\left[0,L\right], \left(X_i\right)_{i \geq 1}, \left(P_i\right)_{i \geq 1} \right)$$ 
where, for $(Q_m, m \geq 1) \sim {\rm PD}(\beta, \beta)$ independent of i.i.d.\ $(R_j^{(m)}, j\geq 1) \sim {\rm PD}(1-\beta, -\beta)$, $m \geq 1$, 
the atom sizes are given via 
$$ \left(P_i, i \geq 1\right)=\left(Q_m R_j^{(m)}, j \geq 1, m \geq 1\right)^\downarrow,$$ 
and the atom locations are defined via i.i.d.\ Unif$([0,1])$-variables $(U_m, m \geq 1)$ and 
$$L:=\lim_{m \rightarrow \infty} m \Gamma(1-\beta)Q_m^\beta, \quad X_i=LU_m \text{ if } P_i=Q_mR_j^{(m)}, \qquad i \geq 1.$$

\subsection{Second main result: uniqueness and attraction for the new RDE}

We now turn to the uniqueness and attraction of the fixpoints in \eqref{RDE for alpha-stable tree}. By Theorem 
\ref{RDE for alpha-stable tree THM} and Remark \ref{RDE alpha counterexample Prop}, uniqueness will only hold up to multiplication by a constant 
and under additional moment conditions on tree heights. As our setup works for more general $\xi \in \Xi$, we will broaden our scope, and consider 
the RDE \eqref{RDE for alpha-stable tree} in a less specific setting.

It will be useful to work in the framework of a recursive tree process, as defined in Section \ref{RDE subsection}. Let us consider a sequence of 
i.i.d.\ $\mathbb R$-trees with one marked leaf with distribution $\eta$ on $\mathbb{T}_{\rm m}$, and an i.i.d.\ family of sequences of scaling factors 
$(\xi_{\mathbf{u}i}, i \geq 0), \mathbf{u} \in \mathbb{U},$ with some distribution $\nu$ on $\Xi$, where we recall the Ulam--Harris notation 
$\mathbb{U}=\bigcup_{n\geq 0}\mathbb{N}^n$. 

For $n\geq 1$, we would like to study the distribution $\Phi_\beta^n(\eta)$ of $\cT_n:=\tau^{(n)}_\emptyset$, where
\begin{equation} \label{treesequ} 
  \tau^{(n)}_{\mathbf{u}}:=g_\beta\left( (\xi_{\mathbf{u}i},i\ge 0), \left(\tau^{(n)}_{\mathbf{u} i}, i \geq 0\right)\right),
  \quad\mathbf{u} \in \mathbb{N}^k, \quad k=n,\ldots,1,
\end{equation}
for $\tau^{(n)}_{\mathbf{u} i} \sim \eta, i \geq 0$, $\mathbf{u} \in \mathbb{N}^n$, i.i.d.. Note that this setup induces a recursive tree process, and, in particular, a recursive tree framework $(((\xi_{\mathbf{u}i}, i\geq 0), \mathbf{u} \in \mathbb{U}),g_\beta)$.

Furthermore, let ${\mathcal P}_\infty(\mathbb{T}_{\rm m})\subset \mathcal P(\mathbb{T}_{\rm m})$ be defined as
$$ {\mathcal P}_\infty \left(\mathbb{T}_{\rm m}\right):=\left\{\eta \in \mathcal P\left(\mathbb{T}_{\rm m}\right)\colon \mathbb{E}\left[{\rm ht}\left(\mathcal T\right)^p\right]<\infty \text{ for all } p > 0 \text{ where } \left(\mathcal T,d,\rho,x\right) \sim \eta  \right\}.$$ 

Our main result in this section is as follows.

\begin{theorem}\label{uni and attr} For any $\Xi$-valued random variable $\xi\!=\!(\xi_i,i\!\ge\! 0)$ such that 
  $\mathbb{P}(\xi_0\!+\!\xi_1\! <\! 1)\!=\!1$ and $\mathbb P(\xi_0>0, \xi_1 >0)=1$, choose $\beta \in (0,1)$ such that 
  $\mathbb{E}[\xi_0^\beta + \xi_1^\beta]=1$. Then, for any $\eta \in \mathcal P_\infty(\mathbb{T}_{\rm m})$ with $h:=\mathbb{E}[d(\rho, x)]$ for 
  $(\mathcal T, d, \rho, x) \sim \eta$, 
  $$\Phi^n_\beta\left(\eta\right) \rightarrow \eta^*_h \text{ weakly as } n \rightarrow \infty,$$
  where $\eta^*_h$ is the unique fixpoint of $\Phi_\beta$ in $\mathcal{P}_\infty(\mathbb{T}_{\rm m})$ with $\mathbb{E}[d^*(\rho^*\!,x^*)]\!=\!h$ for $(\mathcal T^*\!, d^*\!, \rho^*\!, x^*)\! \sim\! \eta^*_h$.
\end{theorem}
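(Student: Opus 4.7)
Following the high-level strategy outlined in the introduction, I would reduce Theorem \ref{uni and attr} to the uniqueness and attraction statement of \cite{rw16} (Theorem \ref{rec con}) via three ingredients: a one-dimensional smoothing equation for the spine length, a string-of-beads representation of the fixpoint, and a tightness argument built around a dominating CRT.

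\emph{One-dimensional spine analysis and string-of-beads reduction.} From the definition of $g_\beta$ in \eqref{concatenation metric}, the path in a concatenated tree from $\rho=\psi_c(\rho_0)$ to $x=\psi_c(x_1)$ traverses the $\rho_0$-to-$x_0$ segment of $\tau_0$ (rescaled by $\xi_0^\beta$) and then the $\rho_1\sim_c x_0$-to-$x_1$ segment of $\tau_1$ (rescaled by $\xi_1^\beta$). Hence any fixpoint of $\Phi_\beta$ satisfies the one-dimensional smoothing equation
\[ D \overset{d}{=} \xi_0^\beta D_0 + \xi_1^\beta D_1, \qquad D := d(\rho,x), \]
with $D_0, D_1$ i.i.d.\ copies of $D$ independent of $(\xi_0,\xi_1)$. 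Under $\mathbb{E}[\xi_0^\beta+\xi_1^\beta]=1$, $\mathbb{P}(\xi_0+\xi_1<1)=1$ and the $\mathcal{P}_\infty$ moment conditions, classical results (Durrett--Liggett, Biggins--Kyprianou) give uniqueness of fixpoints up to multiplicative scaling together with attraction, so the marginal law of $D$ under any fixpoint in $\mathcal{P}_\infty$ is determined by $h=\mathbb{E}[D]$. To promote this marginal identification to the full tree, I would iterate the recursion \eqref{treesequ} along the spine from root to marked leaf: at each level the subtrees with indices $i\geq 2$ attached at the common point $x_0=\rho_1$ remain as rescaled independent copies, while the spine splits further. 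The limiting picture is a random interval $[0,D]$ equipped with a random atomic measure coding attachment locations and subtree masses, with an independent scaled copy of the fixpoint attached at each atom \textemdash{} precisely the generalised-string-of-beads setup of Theorem \ref{rec con}. The uniqueness and attraction statement of \cite{rw16} then identifies the fixpoint up to an overall scaling of distances, pinned down by $h$ via the smoothing equation.

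\emph{Tightness via a dominating CRT.} To transfer this identification to the iterates $\Phi_\beta^n(\eta)$ of an arbitrary $\eta\in\mathcal{P}_\infty(\mathbb{T}_{\rm m})$, I would establish tightness in $\mathbb{T}_{\rm m}$. The plan is to use the i.i.d.\ family $(\xi_{\mathbf{u}i},\mathbf{u}\in\mathbb{U})$ to construct, via Theorem \ref{rec con}, an auxiliary CRT $\check{\mathcal{T}}$ whose generalised string is chosen so that $\mathrm{ht}(\check{\mathcal{T}})$ has finite moments of all orders. An inductive Gromov--Hausdorff comparison should then give a stochastic domination $\mathrm{ht}(\mathcal{T}_n)\leq C\cdot\mathrm{ht}(\check{\mathcal{T}})$, where $C$ depends on the initial heights drawn from $\eta$. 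Combined with the $\mathcal{P}_\infty$ moment conditions on $\eta$, this yields tightness of $(\Phi_\beta^n(\eta))_{n\geq 1}$. Every subsequential limit is then a fixpoint of $\Phi_\beta$ in $\mathcal{P}_\infty$ with prescribed mean spine length $h$, and the previous step forces it to equal $\eta^*_h$, giving weak convergence.

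\emph{Main obstacle.} I expect the hardest step to be tightness: constructing a dominating CRT with the right moment properties and verifying $\mathrm{ht}(\mathcal{T}_n)\leq C\cdot\mathrm{ht}(\check{\mathcal{T}})$ uniformly in $n$ and across arbitrary $\eta\in\mathcal{P}_\infty(\mathbb{T}_{\rm m})$. Propagating moment control through a single-point concatenation of potentially countably many summands, and surviving the $n$-fold iteration, demands a careful choice of the dominating generalised string of \cite{rw16}. A secondary subtlety is that \cite{rw16} is formulated in $\mathbb{T}_{\rm w}$ while the present RDE lives in $\mathbb{T}_{\rm m}$; I would need to track the marked point $x$ through the string-of-beads limiting procedure and verify that its image in the limit matches $x^*$.
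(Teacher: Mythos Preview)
Your proposal assembles the right ingredients --- the spine smoothing equation, the string-of-beads link to \cite{rw16}, and a dominating CRT for moment control --- and in that sense is close in spirit to the paper. The architecture, however, is different. The paper does \emph{not} argue via tightness plus identification of subsequential limits as fixpoints. Instead, under the reduction to trivial one-branch starting trees (Assumption~(A)), it proves $\mathcal{T}_n\to\mathcal{T}$ \emph{in probability} directly by a three-term triangle argument: with $\mathcal{T}_n^k$ the subtree of $\mathcal{T}_n$ spanned by leaves up to recursion depth $k$, and $\mathcal{T}$ constructed explicitly from a generalised string $\zeta$ via Theorem~\ref{rec con}, one shows (i) $\mathcal{T}_n^k\to\mathcal{T}^k$ in probability as $n\to\infty$ for each $k$ (edge lengths converge by the spine martingale in $\mathcal{L}^p$), (ii) $\mathcal{T}^k\to\mathcal{T}$ a.s.\ as $k\to\infty$, and (iii) $\lim_{k}\limsup_n\mathbb{P}(d_{\rm GH}^{\rm m}(\mathcal{T}_n,\mathcal{T}_n^k)>\epsilon)=0$. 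The dominating CRT (built by replacing $\lim_m$ by $\sup_m$ in the definition of $\zeta$) is used only for (iii). General $\eta\in\mathcal{P}_\infty(\mathbb{T}_{\rm m})$ is handled at the very end by coupling to the induced spine law $\eta^\circ\in\mathcal{P}_\infty(\mathbb{T}_{\rm m}^{\rm tr})$ and showing $d_{\rm GH}^{\rm m}(\mathcal{T}_n,\mathcal{T}_n^\circ)\to 0$ in $\mathcal{L}^p$. Uniqueness falls out of attraction; no separate uniqueness theorem from \cite{rw16} is invoked.

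Your route has a genuine gap at the step ``every subsequential weak limit of $\Phi_\beta^n(\eta)$ is a fixpoint''. This requires continuity of $\Phi_\beta$ in the weak topology, which is nowhere established and is delicate for a countably infinite concatenation: $g_\beta$ is only shown to be measurable, and compactness of the concatenated tree is not stable under weak limits of the inputs. You would also need subsequential limits to lie in $\mathcal{P}_\infty$, which asks for uniform integrability of all height powers, not mere tightness. Separately, your reduction of fixpoint uniqueness to \cite{rw16} presupposes that iterating the present RDE along the spine indices $\{0,1\}$ converges and yields a fixpoint of the \cite{rw16} string RDE; this is precisely the content of the paper's explicit construction of $\zeta$ and the martingale convergence, and is not a consequence one can simply cite. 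The paper's direct in-probability argument sidesteps all of these issues.
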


Note that the function $f\colon [0,1] \rightarrow (0, \infty)$, $\beta \mapsto \mathbb{E}[\xi_0^\beta + \xi_1^\beta]$ is continuous with $f(0)=2$ 
and $f(1) < 1$ when $\mathbb{P}(\xi_0+\xi_1 < 1)=1$. Hence, there is always some $\beta \in (0,1)$ such that $f(\beta)=1$ in the situation of 
Theorem \ref{uni and attr}.

The uniqueness and attractiveness of the marked $\alpha$-stable tree in \eqref{RDE for alpha-stable tree} is a direct consequence of 
Theorem \ref{uni and attr}.

\begin{corollary} Let $\alpha \in (1,2]$ and set $\beta:=1-1/\alpha \in (0,1/2]$. Furthermore, let $\xi=(X_0, X_1, X_2, X_3P_j, j \geq 1)$ for 
  independent $(X_0,X_1, X_2, X_3)\sim \rm{Dir}(\beta, \beta, \beta, 1-2\beta)$ and $(P_j, j \geq 1) \sim \rm{PD}(1-\beta, 1-2\beta)$. Then the law 
  $\varsigma_\alpha^{\rm m}$ of the marked $\alpha$-stable tree is the unique fixpoint of $\Phi_\beta$ on $\mathcal{P}_\infty(\mathbb{T}_{\rm m})$ 
  with $\mathbb{E}[d(\rho, x)]= \alpha \Gamma(\beta)/\Gamma(2\beta)$ for $(\mathcal T, d, \rho,  x) \sim \eta$, 
  $\eta \in \mathcal{P}_\infty(\mathbb{T}_{\rm m})$. Furthermore, for any $\eta \in \mathcal{P}_\infty(\mathbb{T}_{\rm m})$ with 
  $\mathbb{E}[d(\rho, x)]=h$ for $(\mathcal T, d, \rho, x) \sim \eta$, we have 
  $$\Phi_\beta^n\left(\eta\right) \rightarrow \varsigma^{\rm m}_{\alpha,h} \text{ weakly as } n \rightarrow \infty,$$ 
  where $\varsigma^{\rm m}_{\alpha,h}$ denotes the distribution of the marked $\alpha$-stable  tree with distances scaled by 
  $h/(\alpha \Gamma(\beta)/\Gamma(2\beta))$. 
\end{corollary}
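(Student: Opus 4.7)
The plan is to apply Theorem \ref{uni and attr} to the specific random vector $\xi=(X_0,X_1,X_2,X_3P_j,j\geq1)$ and identify its unique fixpoint as $\varsigma^{\rm m}_\alpha$ via Theorem \ref{RDE for alpha-stable tree THM}. The hypotheses $\mathbb{P}(\xi_0>0,\xi_1>0)=1$ and $\mathbb{P}(\xi_0+\xi_1<1)=1$ are immediate: $X_0,X_1>0$ a.s.\ and $X_2+X_3>0$ a.s.\ in both regimes $\alpha=2$ and $\alpha\in(1,2)$. The only substantive verification is the moment identity $\mathbb{E}[\xi_0^\beta+\xi_1^\beta]=1$: the marginal of $X_0$ under $\textnormal{Dir}(\beta,\beta,\beta,1-2\beta)$ is $\textnormal{Beta}(\beta,1)$ (the other three parameters sum to $1$), whence $\mathbb{E}[X_0^\beta]=\int_0^1\beta x^{2\beta-1}\,dx=1/2$, and by symmetry the sum equals $1$. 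Thus $\beta=1-1/\alpha$ is precisely the index singled out by Theorem \ref{uni and attr} for this $\xi$.

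Next I confirm $\varsigma^{\rm m}_\alpha\in\mathcal{P}_\infty(\mathbb{T}_{\rm m})$. Finiteness of all positive height moments of $\mathcal T_\alpha$ is classical and, in the present framework, may be deduced from Theorem \ref{moments of T check} applied to the $\beta$-generalised string of Section \ref{secRW}, whose length $L$ has finite moments of all orders. Theorem \ref{RDE for alpha-stable tree THM} provides that $\varsigma^{\rm m}_\alpha$ is a fixpoint of $\Phi_\beta$ for this $\xi$, so the uniqueness half of Theorem \ref{uni and attr} identifies $\varsigma^{\rm m}_\alpha$ as the fixpoint in $\mathcal{P}_\infty(\mathbb{T}_{\rm m})$ whose marked-point mean distance equals $\mathbb{E}[d_\alpha(\rho_\alpha,x_\alpha)]$, a value obtained from \eqref{Mittag--Leffler distribution} with $\theta=\beta$, $p=1$ via the recalled fact that $\alpha d_\alpha(\rho_\alpha,x_\alpha)\sim\textnormal{ML}(\beta,\beta)$.

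For the attraction to $\varsigma^{\rm m}_{\alpha,h}$ from any $\eta\in\mathcal{P}_\infty(\mathbb{T}_{\rm m})$ with $\mathbb{E}[d(\rho,x)]=h$, I will use that rescaling all distances by a constant $c>0$ commutes with $g_\beta$, as observed immediately after Theorem \ref{RDE for alpha-stable tree THM}. Hence the dilation $\varsigma^{\rm m}_{\alpha,h}$ is itself a fixpoint of $\Phi_\beta$, lies in $\mathcal{P}_\infty(\mathbb{T}_{\rm m})$, and has mean distance $h$ by construction; the uniqueness part of Theorem \ref{uni and attr} then forces $\eta^*_h=\varsigma^{\rm m}_{\alpha,h}$, and the stated weak convergence is the attraction half of that same theorem. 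No single step is expected to be a serious obstacle: Theorem \ref{uni and attr} carries the bulk of the argument, and the only content-bearing calculation is the one-line check $\mathbb{E}[X_0^\beta+X_1^\beta]=1$ above.
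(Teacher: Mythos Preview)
Your proposal is correct and follows essentially the same approach as the paper: apply Theorem \ref{uni and attr} with this specific $\xi$, invoke Theorem \ref{RDE for alpha-stable tree THM} for the fixpoint property, appeal to Theorem \ref{rec con} for finite height moments, and compute the mean spine length from the ${\rm ML}(\beta,\beta)$ distribution. You supply more detail than the paper does, in particular the explicit verification that $\mathbb{E}[X_0^\beta+X_1^\beta]=1$ via the ${\rm Beta}(\beta,1)$ marginal and the explicit observation that dilations commute with $g_\beta$ to identify $\eta^*_h$ with $\varsigma^{\rm m}_{\alpha,h}$; the paper leaves both of these implicit.
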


\begin{proof} Apply Theorem \ref{uni and attr} with the specific distribution for $\xi$, and $\beta=1-1/\alpha$. Furthermore, recall from Theorem 
  \ref{RDE for alpha-stable tree THM} that the marked $\alpha$-stable tree is a fixpoint of the resulting RDE, is well-known to have height moments 
  of all orders (e.g.\ from its construction via Theorem \ref{rec con}), and from Section \ref{GPU and CRP 
  subsection} that the distance between the root and a uniformly sampled leaf of the $\alpha$-stable tree has distribution ${\rm ML}(\beta, \beta)$
  scaled by $\alpha$, which has mean $\alpha \Gamma(\beta)/\Gamma(2\beta)$ by \eqref{Mittag--Leffler distribution}.
\end{proof}

To prove Theorem \ref{uni and attr}, we first focus on the case when $\eta$ is supported on the space of probability measures on $\textit{trivial}$ 
trees, that is, single branch trees with a root and exactly one leaf (which is marked). We further require that the length of such a tree has 
moments of orders $p > 0$. Specifically, we consider
$$\mathbb{T}_{\rm m}^{\rm tr}:=\left\{(\mathcal T, d,0,y) \in \mathbb{T}_{\rm m}\colon \mathcal T=\llbracket0,y\rrbracket, y > 0 \right\}.%\pagebreak
$$
For most of the proof, we will work in the special case of $\mathbb{T}_{\rm m}^{\rm tr}$-valued initial distributions: 

\medskip

\noindent{\bf Assumption\,($\!$A$\!$): }$\eta\!\in\!\mathcal P_\infty\!\left(\mathbb{T}_{\rm m}^{\textrm{tr}} \right)\!:=\!
  \left\{\eta\!\in\!\mathcal P\!\left(\mathbb{T}_{\rm m}^{\textrm{tr}}\right)\!\colon\mathbb{E}[({\rm ht}(\mathcal T))^p]\!<\!\infty 
  \text{\,for\,all\,}p \!>\! 0 
  \text{\,where\,} \mathcal T\!\sim\! \eta \right\}$.

\medskip 

Under Assumption (A), we will show the convergence of the spine from the root to the marked point in the RDE (Section \ref{secspine}), the convergence of subtrees spanned by leaves up to recursion depth $k$ (Section \ref{secktreeconv}), the CRT limit as $k\rightarrow\infty$ (Section \ref{secktoinfty}) and establish that the RDE is attractive, pulling threads together via a tightness argument (Section \ref{secattr}). We
finally strengthen this to lift Assumption (A) and complete the proof of Theorem \ref{uni and attr}.

%From now on, let $\eta \in \mathcal P_\infty (\mathbb{T}_{\rm m}^{\textrm{tr}})$, and 
For the remainder of this section, we write $(\mathcal T_n, n \geq 0)$ for the sequence of trees constructed in \eqref{treesequ} from $\tau^{(n)}_{\mathbf{u}j} \sim \eta, \mathbf{u} \in \mathbb{N}^n, j \geq 0$. We write $Y_{\mathbf{u}j}:={\rm ht}(\tau^{(n)}_{\mathbf{u}j})$, $\mathbf{u} \in \mathbb{U}$, $j \geq 0$.

\subsection{The spine from the root to the marked point in the RDE}\label{secspine}

We first study an $\mathcal L^p$-bounded martingale arising from the fixpoint equation in Theorem \ref{uni and attr}, which tracks the length of
the spine from the root to the marked point.

\begin{lemma} \label{xi martingale} Let $\xi$ be a $\Xi$-valued random variable with $\bP(\xi_0>0,\xi_1>0)=1$. Let $\beta\in(0,1]$ such that 
  $\bE[\xi_0^\beta+\xi_1^\beta]=1$, let $(\xi_{\mathbf{u}j}, j \geq 0), \mathbf{u} \in \mathbb{U}$, be i.i.d.\ with the same distribution as $\xi$, and define
\begin{equation}   \overline{\xi}_\mathbf{u}:=\xi_{u_1}\xi_{u_1u_2}\cdots\xi_{u_1\ldots u_n}, \quad \mathbf{u}=u_1\ldots u_n \in \mathbb{N}^n, \quad n \geq 1. \end{equation}
   Then the process
  \begin{equation} L_n=\sum_{\mathbf{u} \in\{0,1\}^n}\overline{\xi}_\mathbf{u}^\beta\label{ximartingale} \end{equation}
 is a mean-1 martingale that converges a.s. and in 
%mean $\mathbb{E}[L_n]=1$ for all $n \geq 1$, and is bounded in 
$\mathcal L^p$ for all $p\ge 1$.  
%Furthermore, there is $L_\infty$ such that, as $n \rightarrow \infty$,
%  $$L_n \rightarrow L_\infty \text{ a.s. } \quad \text{ and } \quad L_n \rightarrow L_\infty  \text{ in } \mathcal L^p \text{ for all } p \geq 1.$$ 
\end{lemma}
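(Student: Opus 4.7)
The plan is to verify the martingale property by direct computation, deduce a.s.\ convergence from Doob's theorem, and then establish $\mathcal{L}^p$-boundedness via a recursive moment bound. Let $\mathcal{F}_n := \sigma(\xi_{\mathbf{u}}\colon |\mathbf{u}|\le n)$. Splitting $L_{n+1}$ according to the last letter of the index word gives
$$L_{n+1} = \sum_{\mathbf{v} \in \{0,1\}^n} \overline{\xi}_{\mathbf{v}}^{\,\beta}\left(\xi_{\mathbf{v}0}^\beta + \xi_{\mathbf{v}1}^\beta\right),$$
and since the vectors $(\xi_{\mathbf{v}j}, j\ge 0)$, $\mathbf{v} \in \{0,1\}^n$, are independent of $\mathcal{F}_n$ with $\mathbb{E}[\xi_0^\beta+\xi_1^\beta]=1$, one obtains $\mathbb{E}[L_{n+1}\mid\mathcal{F}_n]=L_n$. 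Since $L_0=1$, $(L_n)$ is a non-negative mean-$1$ martingale and converges a.s.\ to some $L_\infty\ge 0$ by the martingale convergence theorem.

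For the $\mathcal{L}^p$ statement, I would exploit the distributional recursion
$$L_{n+1} \overset{d}{=} \xi_0^\beta L_n^{(0)} + \xi_1^\beta L_n^{(1)},$$
where $L_n^{(0)},L_n^{(1)}$ are i.i.d.\ copies of $L_n$ independent of $(\xi_0,\xi_1)$, obtained by grouping words according to their first letter so that $L_n^{(j)}$ is built from the $\Xi$-valued variables $\xi_{\mathbf{w}}$ with $\mathbf{w}$ beginning with $j$. Set $\rho_p := \mathbb{E}[\xi_0^{p\beta}+\xi_1^{p\beta}]$. The crucial observation is that $\rho_p<1$ for every $p>1$: the $\Xi$-valued hypothesis forces $\xi_0,\xi_1\le 1$ a.s., and $\mathbb{P}(\xi_0>0,\xi_1>0)=1$ together with $\sum_i\xi_i=1$ rules out $\xi_0=1$ (which would entail $\xi_1=0$) and $\xi_1=1$, so $\xi_0,\xi_1\in(0,1)$ a.s.; hence $\xi_j^{p\beta}<\xi_j^\beta$ a.s.\ for $p>1$, yielding $\rho_p<\rho_1=1$ strictly. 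I then proceed by induction on integer $p\ge 1$: a binomial expansion of the recursion, together with conditional independence, gives
$$\mathbb{E}[L_{n+1}^p]=\rho_p\,\mathbb{E}[L_n^p]+\sum_{k=1}^{p-1}\binom{p}{k}\mathbb{E}\!\left[\xi_0^{k\beta}\xi_1^{(p-k)\beta}\right]\mathbb{E}[L_n^k]\,\mathbb{E}[L_n^{p-k}],$$
and since $\xi_j\le 1$, the mixed coefficients are bounded by $1$, so the induction hypothesis ($\sup_n \mathbb{E}[L_n^k]<\infty$ for $k<p$) provides a uniform bound $C$ on the finite sum. Iterating $a_{n+1}\le \rho_p a_n + C$ with $\rho_p<1$ yields $\sup_n \mathbb{E}[L_n^p]<\infty$. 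Non-integer $p$ is then handled by the interpolation $x^p\le x^{\lfloor p\rfloor}+x^{\lceil p\rceil}$ valid for $x\ge 0$.

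To upgrade to $\mathcal{L}^p$-convergence, I combine $\mathcal{L}^{2p}$-boundedness with the a.s.\ limit: the bound $|L_n-L_\infty|^{2p}\le 2^{2p}(L_n^{2p}+L_\infty^{2p})$ together with Fatou (applied to bound $\mathbb{E}[L_\infty^{2p}]$) shows that $\{|L_n-L_\infty|^p\}$ is $\mathcal{L}^2$-bounded, hence uniformly integrable, so Vitali's theorem gives $\mathbb{E}[|L_n-L_\infty|^p]\to 0$. The main obstacle is the induction step for integer $p\ge 2$; the entire argument hinges on the strict inequality $\rho_p<1$, which is a classical Kahane--Peyri\`ere/Biggins-type condition for $\mathcal{L}^p$-convergence of multiplicative cascades, and which here is supplied cleanly by the structural constraints $\xi\in\Xi$ and $\mathbb{P}(\xi_0>0,\xi_1>0)=1$.
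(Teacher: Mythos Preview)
Your proof is correct and rests on the same pivotal fact as the paper's, namely that $\rho_p:=\mathbb{E}[\xi_0^{p\beta}+\xi_1^{p\beta}]<1$ for $p>1$, combined with induction on integer $p$. The organisation of the $\mathcal{L}^p$-boundedness step differs, however. You exploit the one-step smoothing recursion $L_{n+1}=\xi_0^\beta L_n^{(0)}+\xi_1^\beta L_n^{(1)}$, expand binomially, and iterate the scalar inequality $a_{n+1}\le\rho_p a_n+C$; this is the classical Biggins/Kahane--Peyri\`ere route and is arguably the cleaner bookkeeping. The paper instead unrolls the recursion completely: it writes $\mathbb{E}[L_n^p]$ as a sum over $p$-tuples $\mathbf{u}^{(1)},\ldots,\mathbf{u}^{(p)}\in\{0,1\}^n$, groups terms by the length $k$ of their common prefix and the split $j/(p-j)$ at level $k+1$, and bounds the resulting geometric series in $\mathbb{E}[\xi_0^{p\beta}+\xi_1^{p\beta}]$ directly, yielding an explicit uniform bound $f(p)$. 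Both arguments are equivalent in substance; yours isolates the contraction more transparently, while the paper's gives a closed-form bound without iterating in $n$. Your explicit handling of non-integer $p$ via $x^p\le x^{\lfloor p\rfloor}+x^{\lceil p\rceil}$ and of the passage from $\mathcal{L}^p$-boundedness to $\mathcal{L}^p$-convergence is a welcome addition; the paper simply appeals to the $\mathcal{L}^p$-bounded martingale convergence theorem at the end.
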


\begin{proof} It is straightforward to show that $(L_n, n \geq 0)$ is a martingale with $\bE[L_n]=1$ for all $n \geq 1$. So we focus on the $\mathcal L^p$-boundedness. For $p=1$, we have for all $n\ge 1$,
  $$\bE\left[L_n\right]=\sum_{\mathbf{u}\in\{0,1\}^n}\bE\left[\xi_{u_1}^\beta\right]\cdots\bE\left[\xi_{u_1\ldots u_n}^\beta\right]=\left(\bE\left[\xi_0^\beta\right]+\bE\left[\xi_1^\beta\right]\right)^n=1.$$
  Inductively, if for all $j\le p-1$ and $n\ge 1$, we have $\bE[L_n^j]\le f(j)$, then for all $n\ge 1$,
  \begin{align*}
    \bE\left[L_n^p\right]
	  &=\sum_{\mathbf{u}^{(1)},\ldots,\mathbf{u}^{(p)}\in\{0,1\}^n}\bE\left[\overline{\xi}_{\mathbf{u}^{(1)}}^\beta\cdots\overline{\xi}_{\mathbf{u}^{(p)}}^\beta\right]\\
	  &=\sum_{\mathbf{v}\in\{0,1\}^n}\bE\left[\overline{\xi}_\mathbf{v}^{p\beta}\right]
	  +\sum_{k=0}^{n-1}\sum_{\mathbf{v}\in\{0,1\}^k}\bE\left[\overline{\xi}_\mathbf{v}^{p\beta}\right]\sum_{j=1}^{p-1}\binom{p}{j}\bE\left[\xi_0^{j\beta}\xi_1^{(p-j)\beta}\right]\\	  
  & \ \quad\qquad\qquad\qquad  \times  	  \bE\!\left[\sum_{\substack{\mathbf{w}^{(1)},\ldots,
	   \mathbf{w}^{(j)} \\ \in\{0,1\}^{n-k-1}}} \!\!\!\left(\overline{\xi}_{\mathbf{w}^{(1)}}^\beta\cdots\overline{\xi}_{\mathbf{w}^{(j)}}^\beta\right)\!\right]  
	   !\!\bE\!\left[\sum_{\substack{\mathbf{w}^{(j+1)},\ldots,\mathbf{w}^{(p)} \\ \in\{0,1\}^{n-k-1}}} \!\!\!\left(\overline{\xi}_{\mathbf{w}^{(j+1)}}^\beta\cdots\overline{\xi}_{\mathbf{w}^{(p)}}^\beta\right)\!\right].
  \end{align*}
  Specifically, we split the sum over $\mathbf{u}^{(1)},\ldots,\mathbf{u}^{(p)}$ according to the number $k$ of initial entries that are common to all 
  $\mathbf{u}^{(1)},\ldots,\mathbf{u}^{(p)}$ and according to the number $j$ of entries in the $(k+1)$-st place of $\mathbf{u}^{(1)},\ldots,\mathbf{u}^{(p)}$ that equal 0. For each
  $k$ and $j$, there are $\binom{p}{j}$ ways to choose which $j$ they are. By symmetry, the contribution is the same as if they are $1,\ldots,j$,
  so that we write the sum as a sum over $$\mathbf{u}^{(1)}=\mathbf{v}0\mathbf{w}^{(1)},\ldots,\mathbf{u}^{(j)}=\mathbf{v}0\mathbf{w}^{(j)},\mathbf{u}^{(j+1)}=\mathbf{v}1\mathbf{w}^{(j+1)},\ldots,\mathbf{u}^{(p)}=\mathbf{v}1\mathbf{w}^{(p)}.$$ 
  By the induction hypothesis, we can further bound $\bE[L_n^p]$ above by
 \begin{align*}
      &\sum_{k=0}^n\sum_{\mathbf{v}\in\{0,1\}^k}\bE\left[\overline{\xi}_\mathbf{v}^{p\beta}\right]\sum_{j=1}^{p-1}\binom{p}{j}f(j)f(p-j)\\
%	  &\le\left(\sum_{j=1}^{p-1}\binom{p}{j}f(j)f(p-j)\right)\sum_{k=0}^n\left(\bE\left[\xi_0^{p\beta}+\xi_1^{p\beta}\right]\right)^k\\
	  &\le \left(\sum_{j=1}^{p-1}\binom{p}{j}f(j)f(p-j)\right)\frac{1}{1-\bE\left[\xi_0^{p\beta}+\xi_1^{p\beta}\right]}=:f(p)<\infty.
  \end{align*}
This completes the proof by the 
%The second part of the theorem follows from the $\mathcal L^p$-boundedness of $(L_n, n \geq 1)$ and the 
Martingale Convergence Theorem.
\end{proof}

\subsection{Convergence of subtrees spanned by leaves up to depth $k$}\label{secktreeconv}

For the following, it will be useful to represent the trees $\cT_n:=\tau_{\emptyset}^{(n)}$ of (\ref{treesequ}) in such a way that we can talk 
about ``the subtree of $\cT_n$ spanned by the leaves up to depth $k$''. Let us introduce notation for these leaves under the Assumption (A): denote 
by $\Sigma_{n,\mathbf{u}}$ the endpoint of the trivial tree $\tau_\mathbf{u}^{(n)}$ when repeatedly rescaled and finally used to build $\cT_n$. 
Then the leaves of $\cT_n$ up to depth $k$, together with the branch points up to depth $k$, are given by the set of $\Sigma_{n,\mathbf{u}}$ for 
$\mathbf{u}=u_1\cdots u_n\in\bN^n$, with $u_{k+1}=\ldots=u_n=1$.

\begin{proposition} \label{Tk's} Suppose Assumption {\rm(A)} holds and $\cT_n:=\tau_{\emptyset}^{(n)}$ in the setting of (\ref{treesequ}), $n\ge 0$. 
  Let $k \in \mathbb{N}$. For $n \geq k$, let $\mathcal T_n^k$ be the subtree of 
  $\mathcal T_n$ spanned by the root and the leaves up to depth $k$. We consider $\Sigma_{n,11\cdots1}$ as the respective marked point. Then there 
  is an increasing sequence of marked trees $(\mathcal T^k, k \geq 0)$ such that, for all $k \geq 0$, 
  $$\mathcal T_n^k \rightarrow \mathcal T^k \text{ in probability as } n \rightarrow \infty$$ 
  in the marked Gromov--Hausdorff topology. 
\end{proposition}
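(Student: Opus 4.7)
The plan is to exploit the $k$-fold recursive structure of $\mathcal{T}_n^k$ and identify it, under Assumption (A), as an iterate of the concatenation operator $g_\beta$ whose depth-$k$ inputs are trivial intervals of random length; pushing the convergence of these lengths through the continuity of $g_\beta$ will then yield the limit $\mathcal{T}^k$. Writing $\widetilde{L}_\mathbf{w}^{(n)}$ for the distance from $\rho_\mathbf{w}$ to $x_\mathbf{w}$ in the unscaled metric of $\tau_\mathbf{w}^{(n)}$, and using the gluing identifications $\rho_{\mathbf{v}0}=\rho_\mathbf{v}$, $x_{\mathbf{v}0}=\rho_{\mathbf{v}i}$ for $i\geq 1$, $x_{\mathbf{v}1}=x_\mathbf{v}$, an induction on $k$ gives
\begin{equation*}
\mathcal{T}_n^k \;=\; g_\beta^{(k)}\!\Bigl((\xi_\mathbf{v})_{|\mathbf{v}|<k},\, \bigl([0,\widetilde{L}_\mathbf{w}^{(n)}]\bigr)_{\mathbf{w}\in\mathbb{N}^k}\Bigr),
\end{equation*}
where $g_\beta^{(k)}$ is the $k$-fold nested application of $g_\beta$. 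In particular, the combinatorial shape of $\mathcal{T}_n^k$ is already frozen for $n\geq k$, and only the inputs $\widetilde{L}_\mathbf{w}^{(n)}$ still depend on $n$.

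I would then show joint $\mathcal{L}^2$-convergence $\widetilde{L}_\mathbf{w}^{(n)}\to hM_\mathbf{w}$ where $h:=\mathbb{E}[d(\rho,x)]$ for $(\mathcal{T},d,\rho,x)\sim\eta$ and $M_\mathbf{w}$ is a nondegenerate limit with $\mathbb{E}[M_\mathbf{w}]=1$. Under (A), $\widetilde{L}_\mathbf{w}^{(n)}=\sum_{\mathbf{u}\in\{0,1\}^{n-|\mathbf{w}|}}(\overline{\xi}^\mathbf{w}_\mathbf{u})^\beta Y_{\mathbf{w}\mathbf{u}}$ with the $Y_{\mathbf{w}\mathbf{u}}$ i.i.d.\ and possessing moments of all orders. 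Splitting off the mean via $\widetilde{L}_\mathbf{w}^{(n)} = hL^\mathbf{w}_{n-|\mathbf{w}|}+\sum(\overline{\xi}^\mathbf{w}_\mathbf{u})^\beta(Y_{\mathbf{w}\mathbf{u}}-h)$, Lemma~\ref{xi martingale} applied to the sub-RTF rooted at $\mathbf{w}$ gives $hL^\mathbf{w}_{n-|\mathbf{w}|}\to hM_\mathbf{w}$ a.s.\ and in $\mathcal{L}^p$, while the centred remainder has variance $\mathrm{Var}(Y)\bigl(\mathbb{E}[\xi_0^{2\beta}+\xi_1^{2\beta}]\bigr)^{n-|\mathbf{w}|}\to 0$, since $\xi_0,\xi_1\in(0,1)$ a.s.\ (from $\mathbb{P}(\xi_0+\xi_1<1)=1$ and $\mathbb{P}(\xi_0>0,\xi_1>0)=1$) forces $\mathbb{E}[\xi_0^{2\beta}+\xi_1^{2\beta}]<\mathbb{E}[\xi_0^\beta+\xi_1^\beta]=1$. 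Independence of the sub-RTFs at distinct $\mathbf{w}\in\mathbb{N}^k$ upgrades this to joint convergence.

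Finally, pushing this convergence through the $k$-fold iterate $g_\beta^{(k)}$ and invoking the measurability of $g_\beta$ together with its continuity at compact configurations (Proposition~\ref{measurability prop} and the proof of its analogue in \cite{rw16}) with respect to the metric $d_\beta$ on $\Xi^*$, one obtains
\begin{equation*}
\mathcal{T}_n^k \longrightarrow \mathcal{T}^k := g_\beta^{(k)}\!\Bigl((\xi_\mathbf{v})_{|\mathbf{v}|<k},\,\bigl([0,hM_\mathbf{w}]\bigr)_{\mathbf{w}\in\mathbb{N}^k}\Bigr)
\end{equation*}
in probability in $d_{\rm GH}^{\rm m}$. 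The monotonicity $\mathcal{T}^k\hookrightarrow\mathcal{T}^{k+1}$ is immediate from $x_\mathbf{u}=x_{\mathbf{u}1}$: the depth-$k$ leaf family embeds into the depth-$(k+1)$ one, yielding isometric inclusions $\mathcal{T}_n^k\subseteq\mathcal{T}_n^{k+1}$ that pass to the limit. The main obstacle is this continuity step, because at each of the $k$ levels $g_\beta$ concatenates countably many rescaled children, so one needs a uniform tail estimate of the type $\sum_{i>J}\xi_i^\beta\widetilde{L}_{\mathbf{v}i}^{(n)}\to 0$ in probability as $J\to\infty$, uniformly in $n$, in order to reduce to the (easier) finite-arity case and simultaneously verify that the limiting configuration lies in $C_\beta$; this should follow from the uniform $\mathcal{L}^2$-bound on $\widetilde{L}_\mathbf{w}^{(n)}$ established in the previous step, combined with an approximation-by-truncation argument.
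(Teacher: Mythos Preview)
Your decomposition is the same as the paper's: under Assumption (A), $\mathcal{T}_n^k$ has frozen shape for $n\ge k$ and only the edge lengths $\widetilde{L}^k_{n,\mathbf{u}}$, $\mathbf{u}\in\mathbb{N}^k$, vary with $n$; and your $\mathcal{L}^2$-argument for the convergence of each individual length (split off the mean, invoke Lemma~\ref{xi martingale}, control the centred part by $\mathrm{Var}(Y)(\mathbb{E}[\xi_0^{2\beta}+\xi_1^{2\beta}])^{n-k}$) is exactly the paper's. Where you diverge is in the final step: you propose to push convergence through a $k$-fold composition of $g_\beta$ via a continuity argument, and you correctly flag that this requires a uniform-in-$n$ tail estimate over the countably many children at each level, together with verification that the limiting configuration lies in $C_\beta$.

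The paper sidesteps this entirely. Instead of invoking continuity of $g_\beta$, it bounds $d_{\rm GH}^{\rm m}(\mathcal{T}_n^k,\mathcal{T}^k)$ directly by $2^k\max_{\mathbf{u}\in\mathbb{N}^k}|\widetilde{L}^k_{n,\mathbf{u}}-\widetilde{L}^k_{\infty,\mathbf{u}}|$ (any leaf is within $2^k$ edges of the root), and then controls the $p$-th moment of the maximum by the sum:
\[
\mathbb{E}\Big[\big(d_{\rm GH}^{\rm m}(\mathcal{T}_n^k,\mathcal{T}^k)\big)^p\Big]\le 2^{pk}\sum_{\mathbf{u}\in\mathbb{N}^k}\mathbb{E}\big[\overline{\xi}_\mathbf{u}^{p\beta}\big]\,\mathbb{E}\big[|\widetilde{L}_n^0-\widetilde{L}_\infty^0|^p\big].
\]
The point is that for $p\ge 1/\beta$ one has $\sum_{\mathbf{u}\in\mathbb{N}^k}\mathbb{E}[\overline{\xi}_\mathbf{u}^{p\beta}]=(\mathbb{E}[\sum_{i\ge 0}\xi_i^{p\beta}])^k<\infty$, which absorbs the infinite branching in one stroke; combined with the $\mathcal{L}^p$-convergence of $\widetilde{L}_n^0$ (which the paper upgrades from $\mathcal{L}^2$ using the same splitting trick as in Lemma~\ref{xi martingale}), this gives convergence in probability without any truncation or compactness verification. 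Your route would eventually work, but the tail estimate you defer is precisely this $p$-th moment summability, and once you have it the direct distance bound is shorter than threading through continuity of $g_\beta$ on $\Xi^*$.
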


\begin{proof} For $k=0$, $\mathcal T_n^0$ is a trivial one-branch tree with a root and a marked leaf, and total length 
$$\widetilde{L}_n^0=\sum_{\mathbf{u} \in \{0,1\}^n} \overline{\xi}_{\mathbf{u}}^\beta Y_{\mathbf{u}}.$$
Recall the martingale $(L_n, n \geq 1)$ from \eqref{ximartingale} and denote its limit by $L_\infty$. Let $m:=\mathbb{E}[Y_\emptyset]$, and note that,
\begin{align*}
\bE\left[\left(\widetilde{L}^0_n-mL_n\right)^2 \right] 
&= \bE \left[\left( \sum_{\mathbf{u} \in \{0,1\}^n}  \overline{\xi}_{\mathbf{u}}^\beta \left( Y_\mathbf{u} - m\right) \right)^2 \right] \\
&= \sum_{\mathbf{u} \in \{0,1\}^n} \sum_{\mathbf{v} \in \{0,1\}^n}
\bE \left[ \overline{\xi}_{\mathbf{u}}^\beta  \overline{\xi}_{\mathbf{v}}^\beta \right]\bE \left[ \left( Y_\mathbf{u} - m\right)  \left( Y_\mathbf{v} - m\right)  \right]\\
&=\sum_{\mathbf{u} \in \{0,1\}^n} 
\bE \left[  \overline{\xi}_{\mathbf{u}}^{2\beta} \right]\bE \left[ \left( Y_\mathbf{u} - m\right)^2 \right]
= {\rm Var}\left(Y\right) \left( \bE\left[ \xi_{0}^{2\beta} + \xi_{1}^{2\beta} \right]\right)^n \rightarrow 0
\end{align*}
as $n \rightarrow \infty$, where we used the facts that $ Y_\mathbf{u}$ and $ Y_\mathbf{v}$ are independent for $\mathbf{u} \neq \mathbf{v}$, and $\bE[\xi_0^{2\beta} + \xi_1^{2\beta}] < 1$ as $0< \xi_0, \xi_1 < 1$ a.s.. 
Therefore, $\widetilde{L}_n^0 \rightarrow \widetilde{L}^0_\infty:= m\cdot L_\infty$ in $\mathcal L^2$ and almost surely as $n \rightarrow \infty$.

Under Assumption {\rm (A)}, the $Y_\mathbf{u}$ also have finite $p$-th moment for all $p\ge 3$ and splitting $p$-fold sums as in the proof of 
Lemma \ref{xi martingale}, it is straightforward to strengthen this convergence to $\mathcal L^p$-convergence.

Now, let $k\geq 1$, and note that the shapes of $\mathcal T_n^k$ and $\mathcal T_k$ coincide for all $n \geq k$. Let $\widetilde{L}^k_{n,\mathbf{u}}, \mathbf{u} \in \mathbb{N}^k$, denote the lengths of the edges of $\mathcal T_n^k$ using obvious notation, i.e. 
$$\widetilde{L}^k_{n,\mathbf{u} }:= \sum_{\mathbf{v}  \in \{0,1\}^{n-k}} \overline{\xi}_{\mathbf{u} \mathbf{v} }^\beta Y_{\mathbf{u} \mathbf{v} }, \quad \mathbf{u}  \in \mathbb{N}^k.$$
Furthermore, let $\mathcal T^k$ have the same shape and the same marked leaf as $\mathcal T_k$ with edge lengths $\widetilde{L}^k_{\infty,\mathbf{u} }, \mathbf{u}  \in \mathbb{N}^k$, given by 
$$\widetilde{L}^k_{\infty,\mathbf{u} }= \lim_{t \rightarrow \infty} \sum_{\mathbf{v}  \in \{0,1\}^t} \overline{\xi}_{\mathbf{u} \mathbf{v} }^\beta Y_{\mathbf{u} \mathbf{v} }, \quad \mathbf{u} \in \mathbb{N}^k,$$
which exists a.s.\ as a $\overline{\xi}_\mathbf{u} ^\beta$-scaled copy of $\widetilde{L}^0_\infty$, independent for $\mathbf{u}\in\mathbb{N}^k$.

Hence, for each $k \geq 0$, the differences 
$\left\lvert{\widetilde{L}_{n,\mathbf{u} }^k-\widetilde{L}^k_{\infty,\mathbf{u} }}\right\rvert$, $\mathbf{u} \in \mathbb{N}^k$, 
are $\overline{\xi}_\mathbf{u} ^\beta$-scaled independent copies of $\lvert \widetilde{L}_{n}^0-\widetilde{L}^0_\infty \rvert$. Therefore, for $p \geq 1/\beta$, as every leaf of $\cT_n^k$ or $\cT^k$ is at most $2^k$ edges from the root and from another leaf, by \eqref{marked distortion},
\begin{align*} 
\mathbb{E}\left[ \left(d^{\rm m}_{\rm GH}\left(\mathcal T_n^k, \mathcal T^k\right)\right)^p\right]  \leq 2^{pk}\mathbb{E}\left[ \max_{\mathbf{u}  \in \mathbb{N}^k}  \left\lvert \widetilde{L}_{n,\mathbf{u} }^k-\widetilde{L}^k_{\infty,\mathbf{u} } \right\rvert ^p    \right]
&\leq 2^{pk} \sum_{\mathbf{u}  \in \mathbb{N}^k} \mathbb{E}\left[  \left( \left\lvert \widetilde{L}_{n,\mathbf{u} }^k-\widetilde{L}^k_{\infty,\mathbf{u} } \right\rvert \right)^p    \right]\\ 
&= 2^{pk} \sum_{\mathbf{u}  \in \mathbb{N}^k}\mathbb{E}\left[ \overline{\xi}_\mathbf{u} ^{p\beta}\right] \mathbb{E}\left[ \left\lvert \widetilde{L}_{n}^0-\widetilde{L}^0_\infty\right\rvert^p\right].
\end{align*}
Since $\sum_{\mathbf{u}  \in \mathbb{N}^k} \mathbb{E}\left[ \overline{\xi}_\mathbf{u} ^{p\beta}\right] < \infty$ for $p \geq 1/\beta$ and 
$\widetilde{L}_{n}^0 \rightarrow \widetilde{L}^0_\infty$ in $\mathcal L^p$ as $n\rightarrow \infty$, we conclude that, for any $\epsilon > 0$,
$$\lim_{n \rightarrow \infty} \mathbb{P} \left(d^{\rm m}_{\rm GH}\left(\mathcal T_n^k, \mathcal T^k\right) > \epsilon\right) 
\leq \lim_{n \rightarrow \infty}  \epsilon^{-p}\mathbb{E}\left[ \left(d^{\rm m}_{\rm GH}\left(\mathcal T_n^k, \mathcal T^k\right)\right)^p\right] 
=0.$$
Hence, $\mathcal T_n^k \rightarrow \mathcal T^k$ in probability in the marked Gromov--Hausdorff topology as $n \rightarrow \infty$.
\end{proof}

\subsection{The CRT limit of $\mathcal{T}^k$ as $k\rightarrow\infty$}\label{secktoinfty}

Next, we want to prove the convergence of $\mathcal T^k$ as $k\rightarrow \infty$. To this end, we need to identify a suitable candidate for the limit. We employ the recursive construction method for CRTs as described in Section \ref{secRW}. Define a generalised string
\begin{equation}	\zeta=\left( \left[0, \widetilde{L}^0_\infty \right], \left(X_\mathbf{u}\right)_{\mathbf{u} \in \mathbb{U}^*}, \left(Q_\mathbf{u}\right)_{\mathbf{u} \in \mathbb{U}^*}\right) \label{string for T} \end{equation}
where $\mathbb{U}^*:=\bigcup_{n \geq 0} \{0,1\}^n \times \{2,3,\ldots\}$, $\widetilde{L}^0_\infty$ is given above, and $(X_\mathbf{u})_{\mathbf{u} \in \mathbb{U}^*}, (Q_\mathbf{u})_{\mathbf{u} \in \mathbb{U}^*}$ are defined by dyadically splitting $\widetilde{L}^0$ as follows. See Figure
\ref{figstring} for an illustration.

\begin{itemize}

\item Let $Q_{i}:=\xi_i$, $i \geq 2$, and, for $\mathbf{u}=u_1\ldots u_n \in \mathbb{U}^*$, define
$$Q_{\mathbf{u}}:=\xi_{u_1} \xi_{u_1 u_2} \cdots \xi_{u_1 u_2 \ldots u_n}=\overline{\xi}_{u_1 u_2 \ldots u_n}.$$
Note that $0 \leq Q_{\mathbf u} < 1$ a.s. for all $\mathbf{u} \in \mathbb{U}^*$, $\sum_{\mathbf{u} \in \mathbb{U}^*} Q_{\mathbf{u}}=1$ a.s., and $\mathbb{E}\left[ \sum_{\mathbf{u} \in \mathbb{U}^*} Q_{\mathbf{u}}^{p\beta } \right] <1$ for all $p > 1/\beta$.

\item Define the locations $\left( X_{\mathbf{u}} \right)_{\mathbf{u} \in \mathbb{U}^*}$ of the atoms with respective masses $\left(Q_{\mathbf{u}} \right)_{\mathbf{u} \in \mathbb{U}^*}$ by $$X_{i}=\lim_{m \rightarrow \infty}  \sum_{(0 u_2 \ldots u_m) \in \{0\} \times \{0,1\}^{m-1}} \overline{\xi}_{0 u_2\ldots u_m}^\beta Y_{0u_2\ldots u_m},\quad i \geq 2, $$ and, for general $\mathbf{u}=(u_1\ldots u_n) \in \mathbb{U}^*$,
$$X_{u_1 \ldots u_n} \!=\! \lim_{m \rightarrow \infty}\! \left\{\! \sum_{\substack{(v_1 \ldots v_n) \in \{0,1\}^{n}\colon (v_1\ldots v_n) \prec (u_1 \ldots u_n) \\ (v_1, \ldots, v_{n-1}, v_n) \neq (u_1, \ldots, u_{n-1},1)}}\ \sum_{v_{n+1}\ldots v_{n+m} \in \{0,1\}^m}\! 
\overline{\xi}_{v_1\ldots v_{n+m}}^\beta Y_{v_1 \ldots v_{n+m}} \!\right\}$$
where $\prec$ denotes the lexicographic order, that is, $$(v_1\ldots v_n) \prec (u_1 \ldots u_n) \iff \exists t \geq 1 \text{ such that } \forall k < t\colon v_k = u_k \text{ and } v_t < u_t.$$ 
\end{itemize}
Noting in particular that this specifies $X_{\mathbf{u}i}=X_{\mathbf{u}2}$ for all $i\ge 2$ and each $\mathbf{u}2\in\mathbb{U}^*$, the scaled
lengths and dyadic splits to depth $k=3$ are illustrated in Figure \ref{figstring}.
\begin{figure}
  \begin{picture}(390,60)
    \put(30,15){\line(1,0){330}}
    \put(20,14){$\scriptstyle\rho$}
    \put(365,14){$\scriptstyle\Sigma_\emptyset$}

    \put(153,8){\line(0,1){14}}
    \put(150,0){$\scriptstyle X_i$}
    \put(93,10){\line(0,1){10}}
    \put(88,0){$\scriptstyle X_{0i}$}
    \put(283,10){\line(0,1){10}}
    \put(278,0){$\scriptstyle X_{1i}$}
    \put(53,12){\line(0,1){6}}
    \put(46,0){$\scriptstyle X_{00i}$}
    \put(128,12){\line(0,1){6}}
    \put(121,0){$\scriptstyle X_{01i}$}
    \put(213,12){\line(0,1){6}}
    \put(206,0){$\scriptstyle X_{10i}$}
    \put(313,12){\line(0,1){6}}
    \put(306,0){$\scriptstyle X_{11i}$}
    
    \put(40,13){\line(0,1){4}}
    \put(70,13){\line(0,1){4}}
    \put(110,13){\line(0,1){4}}
    \put(143,13){\line(0,1){4}}
    \put(190,13){\line(0,1){4}}
    \put(242,13){\line(0,1){4}}
    \put(295,13){\line(0,1){4}}
    \put(350,13){\line(0,1){4}}
    
    \put(34,14){\line(0,1){2}}
    \put(48,14){\line(0,1){2}}
    \put(60,14){\line(0,1){2}}
    \put(84,14){\line(0,1){2}}
    \put(103,14){\line(0,1){2}}
    \put(117,14){\line(0,1){2}}
    \put(134,14){\line(0,1){2}}
    \put(147,14){\line(0,1){2}}
    \put(165,14){\line(0,1){2}}
    \put(200,14){\line(0,1){2}}
    \put(222,14){\line(0,1){2}}
    \put(255,14){\line(0,1){2}}
    \put(291,14){\line(0,1){2}}
    \put(302,14){\line(0,1){2}}
    \put(335,14){\line(0,1){2}}
    \put(357,14){\line(0,1){2}}
    
    \put(169,25){$\scriptstyle\widetilde{L}_{\infty,100}^3$}
    \put(166,27){\vector(-1,0){13}}
    \put(196,27){\vector(1,0){17}}
    \put(234,25){$\scriptstyle\widetilde{L}_{\infty,101}^3$}
    \put(231,27){\vector(-1,0){18}}
    \put(261,27){\vector(1,0){22}}
    \put(285,25){$\scriptstyle\widetilde{L}_{\infty,110}^3$}
    \put(322,25){$\scriptstyle\widetilde{L}_{\infty,111}^3$}
    \put(319,27){\vector(-1,0){6}}
    \put(349,27){\vector(1,0){11}}

    \put(42,35){$\scriptstyle\widetilde{L}_{\infty,00}^2$}
    \put(39,37){\vector(-1,0){9}}
    \put(65,37){\vector(1,0){28}}
    \put(117,35){$\scriptstyle\widetilde{L}_{\infty,01}^2$}
    \put(114,37){\vector(-1,0){21}}
    \put(140,37){\vector(1,0){13}}
    \put(202,35){$\scriptstyle\widetilde{L}_{\infty,10}^2$}
    \put(199,37){\vector(-1,0){46}}
    \put(225,37){\vector(1,0){58}}
    \put(302,35){$\scriptstyle\widetilde{L}_{\infty,11}^2$}
    \put(299,37){\vector(-1,0){16}}
    \put(325,37){\vector(1,0){35}}
    
    \put(86,45){$\scriptstyle\widetilde{L}_{\infty,0}^1$}
    \put(83,47){\vector(-1,0){53}}
    \put(106,47){\vector(1,0){47}}
    \put(276,45){$\scriptstyle\widetilde{L}_{\infty,1}^1$}
    \put(273,47){\vector(-1,0){120}}
    \put(296,47){\vector(1,0){64}}
    \put(149,55){$\scriptstyle\widetilde{L}_\infty^0$}
    \put(146,57){\vector(-1,0){116}}
    \put(163,57){\vector(1,0){197}}
  \end{picture}
  \caption{Dyadic structure of limiting branch lengths 
  $\widetilde{L}_{\infty,\mathbf{u}}^k=\widetilde{L}_{\infty,\mathbf{u}0}^{k+1}+\widetilde{L}_{\infty,\mathbf{u}1}^{k+1}$, for which 
  $\overline{\xi}_\mathbf{u}^{-\beta}\widetilde{L}_{\infty,\mathbf{u}}^k$, $\mathbf{u}\in\{0,1\}^k$ are i.i.d.\ for each $k\ge 1$, ordered in 
  lexicographical order; atom positions $X_{\mathbf{u}i}$ are between fragments $\widetilde{L}_{\infty,\mathbf{u}0}^n$ and $\widetilde{L}_{\infty,\mathbf{u}1}^n$, $\mathbf{u}\in\{0,1\}^n$, $n\ge 0$.}
  \label{figstring}
\end{figure}
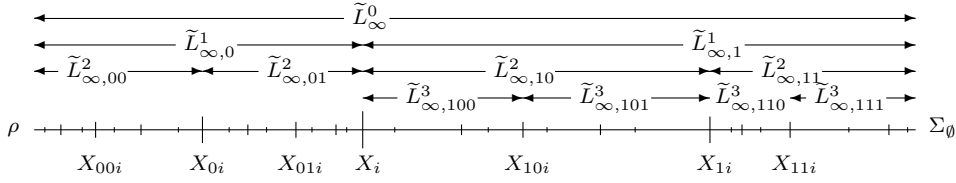

We now apply the recursive construction as outlined in Theorem \ref{rec con} to the generalised string $\zeta$, which results in an $\mathbb R$-tree $\mathcal T$, whose distribution we denote by $\eta^*$. 

\begin{proposition} \label{Construction of T} Let $\beta \in (0,1]$, and $p > 1/\beta$. Consider the generalised string $\zeta$ given by \eqref{string for T}. Apply the recursive construction described in Theorem \ref{rec con} to construct a sequence of random $\mathbb R$-trees $(\mathcal T_n^*, n \geq 0)$. Then $\mathcal T_n^*\rightarrow \mathcal T$ a.s. in the Gromov--Hausdorff topology for some random compact $\mathbb R$-tree $\mathcal T$ with $$\bE\left[{\rm ht}\left(\mathcal T\right)^p\right]< \infty \text{ for all } p > 0.$$
\end{proposition}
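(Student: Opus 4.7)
The plan is to verify directly that the generalised string $\zeta$ from \eqref{string for T} satisfies the three hypotheses of Theorem~\ref{rec con} and then invoke that theorem. Specifically, we must check (i) $\mathbb{E}[L^p]<\infty$ for some $p>1/\beta$ (in fact for all $p$), where $L=\widetilde{L}_\infty^0$; (ii) $0\le Q_\mathbf{u}<1$ a.s.\ for every $\mathbf{u}\in\mathbb{U}^*$; and (iii) $\sum_{\mathbf{u}\in\mathbb{U}^*}Q_\mathbf{u}=1$ a.s.

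Conditions (i) and (ii) are essentially already in hand. For (i), the proof of Proposition~\ref{Tk's} established, under Assumption~(A), the $\mathcal{L}^p$-convergence $\widetilde{L}_n^0\to\widetilde{L}_\infty^0$ for every $p\ge 1$, whence $\mathbb{E}[(\widetilde{L}_\infty^0)^p]<\infty$ for all $p>0$ and $p^*:=\sup\{p\ge 1:\mathbb{E}[L^p]<\infty\}=\infty$. For (ii), the hypotheses $\mathbb{P}(\xi_0>0,\xi_1>0)=1$ and $\mathbb{P}(\xi_0+\xi_1<1)=1$ together with $\sum_{i\ge 0}\xi_i=1$ (inherent in the definition of $\Xi$) force each $\xi_i\in[0,1)$ a.s., with $\xi_0,\xi_1\in(0,1)$ a.s.; hence each $Q_\mathbf{u}=\overline{\xi}_\mathbf{u}\in[0,1)$ a.s.

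The real work is in (iii). Telescoping with $\sum_{i\ge 0}\xi_{\mathbf{v}i}=1$ at each $\mathbf{v}\in\{0,1\}^k$ gives
$$\sum_{k=0}^{N-1}\sum_{\mathbf{v}\in\{0,1\}^k}\sum_{i\ge 2}\overline{\xi}_{\mathbf{v}i}\;=\;\sum_{k=0}^{N-1}\sum_{\mathbf{v}\in\{0,1\}^k}\overline{\xi}_\mathbf{v}\bigl(1-\xi_{\mathbf{v}0}-\xi_{\mathbf{v}1}\bigr)\;=\;1-\sum_{\mathbf{v}\in\{0,1\}^N}\overline{\xi}_\mathbf{v},$$
so it suffices to show $M_N:=\sum_{\mathbf{v}\in\{0,1\}^N}\overline{\xi}_\mathbf{v}\to 0$ a.s. For $\beta<1$, the bound $x<x^\beta$ on $(0,1)$ combined with $\mathbb{E}[\xi_0^\beta+\xi_1^\beta]=1$ produces the strict subcriticality $c:=\mathbb{E}[\xi_0+\xi_1]<1$. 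Since the $(\xi_{\mathbf{v}0},\xi_{\mathbf{v}1})$, $\mathbf{v}\in\{0,1\}^N$, are i.i.d.\ copies of $(\xi_0,\xi_1)$ conditional on $(\xi_\mathbf{u})_{|\mathbf{u}|\le N}$, the rescaled process $M_N/c^N$ is a nonnegative martingale, hence convergent a.s., and this forces $M_N\to 0$ a.s. (The degenerate case $\beta=1$ requires $\mathbb{P}(\xi_0+\xi_1=1)=1$ and makes the identity trivial.)

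With (i), (ii) and (iii) in place, Theorem~\ref{rec con} immediately supplies the a.s.\ Gromov--Hausdorff convergence $\mathcal{T}_n^*\to\mathcal{T}$ for some random compact $\mathbb{R}$-tree $\mathcal{T}$, together with $\mathbb{E}[\text{ht}(\mathcal{T})^p]<\infty$ for every $p<p^*=\infty$. The only nontrivial step is (iii), that is, controlling the ``leakage'' of mass down the dyadic $\{0,1\}$-spine; this rests on the strict comparison $\mathbb{E}[\xi_0+\xi_1]<\mathbb{E}[\xi_0^\beta+\xi_1^\beta]=1$.
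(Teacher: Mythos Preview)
Your proof is correct and follows the same approach as the paper: verify the hypotheses of Theorem~\ref{rec con} and apply it. The paper's proof is a single sentence (``This is a direct application of Theorem~\ref{rec con}''), relying on the assertions made in the text immediately preceding the proposition that $0\le Q_{\mathbf u}<1$ a.s.\ and $\sum_{\mathbf u\in\mathbb U^*}Q_{\mathbf u}=1$ a.s., which are stated there without justification. You supply that justification, in particular a clean telescoping/martingale argument for~(iii), so your version is strictly more complete. One minor remark: the case $\beta=1$ is in fact vacuous here, since the standing hypotheses $\mathbb P(\xi_0+\xi_1<1)=1$ and $\mathbb E[\xi_0^\beta+\xi_1^\beta]=1$ are incompatible when $\beta=1$; your parenthetical acknowledges this, though ``makes the identity trivial'' slightly understates that the construction degenerates entirely (all $Q_{\mathbf u}=0$) in that excluded case.
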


\begin{proof} This is a direct application of Theorem \ref{rec con}.
\end{proof}

It will be convenient to refer to the root as $\rho$ and the endpoints of the generalised strings as $\Sigma_{\mathbf{u}}$, $\mathbf{u}\in\bU$. 
Specifically, we denote by $\Sigma_\emptyset$ the endpoint of $\cT_0^*$. In the first step of the construction performed in 
Proposition \ref{Construction of T}, we attach branches to all branch points on the initial spine at once. The string has all the branch points 
$X_{\mathbf u}$, $\mathbf{u}\in\mathbb{U}^*=\bigcup_{j\ge 0}\{0,1\}^j\times\{2,3,\ldots\}$ placed as in the $n\rightarrow\infty$ limit. In 
particular, we have $\Sigma_{\mathbf{u}}\in\cT_1^*$ for all $\mathbf{u}\in\mathbb{U}^*$ and for more general $\mathbf{u}\in\mathbb{U}$, we have 
$\Sigma_{\mathbf{u}}\in\cT_n^*$ if and only if $\mathbf{u}=u_1\cdots u_j$ has at most $n$ entries $u_i\in\{2,3,\ldots\}$, i.e.\ at least $j-n$ 
entries $u_i\in\{0,1\}$.  
%As $(\mathcal T_n^*, n \geq 0)$ evolves, there is no change of the initial spine. The attached branches are scaled independent copies of the initial string.

In contrast, as $(\mathcal T_n, n \geq 0)$ evolves, the branch points on the initial spine are created successively, and distances change in each 
step as branches are replaced by two scaled branches in each step. We will further couple the vectors $(\xi_{\mathbf{u}i},i\ge 0)$, 
$\mathbf{u}\in\mathbb{U}$, of the construction of $(\mathcal T_n,n\ge 0)$, and the generalised strings $\zeta_{\mathbf{v}}$, 
$\mathbf{v}\in\mathbb{U}$. Specifically, we take $\widetilde{L}^k_{\infty,\mathbf{v}}$ as the length of $\zeta_{\mathbf{v}}$ and build
$((X_{\mathbf{v},\mathbf{u}})_{\mathbf{u}\in\mathbb{U}^*},(Q_{\mathbf{v},\mathbf{u}})_{\mathbf{u}\in\mathbb{U}^*})$ from the appropriate subfamilies 
of $((\xi_{\mathbf{u}i},i\ge 0),\mathbf{u}\in\mathbb{U})$. We will not require precise notation for these subfamilies, but we will exploit the
coupling and the independence of these subfamilies for all $\mathbf{v}\in\mathbb{N}^n$, $n\ge 0$, which is a consequence of the branching property 
of the recursive tree framework $((\xi_{\mathbf{u}i},i\ge 0),\mathbf{u}\in\mathbb{U})$.
%The relationship between $(\mathcal T_n^*, n \geq 0)$ and $(\mathcal T_n, n \geq 0)$ is reflected in the proof of Proposition \ref{Tk's}.

%We will exploit the coupling of $(\mathcal T_n, n \geq 0)$ with the sequence of i.i.d.\ copies of the generalised string $\zeta$ used in the 
%construction in Proposition \ref{Construction of T}, to obtain the following result. 

Indeed, we can represent $\cT$ like $\cT_n$, $n\ge 0$, in $\ell_1(\mathbb{U})$ in such a way that the convergence of Proposition \ref{Tk's} holds
for the Hausdorff metric on compact subsets of $\ell_1(\mathbb{U})$. Then, we have further a.s. convergence of $\Sigma_{n,\mathbf{u}}$ to limits that
we denote by $\Sigma_{\mathbf{u}}$, for all $\mathbf{u}\in\mathbb{U}$. Then the trees $\cT^k$ are spanned by 
$\Sigma_{\mathbf{u}}$, $\mathbf{u}\in\bigcup_{0\le j\le k}\bN^j$, while $\cT_n^*$ is spanned by 
$\Sigma_{\mathbf{u}}$, $\mathbf{u}=\mathbf{u}^{(1)}v_1\mathbf{u}^{(2)}v_2\cdots\mathbf{u}^{(n)}v_n\mathbf{u}^{(n+1)}$, $\mathbf{u}^{(1)},\ldots,\mathbf{u}^{(n+1)}\in\bigcup_{m\ge 0}\{0,1\}^m$, $v_1,\ldots,v_n\in\bN$. 

\begin{lemma} \label{T^k converges to T} Let $(\mathcal T^k, k\geq 0)$ be the sequence of trees from Proposition \ref{Tk's}, and let $(\mathcal T_n^*, n \geq 0)$ be the sequence of trees from Proposition \ref{Construction of T} with $\mathcal T_n^* \rightarrow \mathcal T$ a.s. as $n \rightarrow \infty$. Then $$ \mathcal T^k \rightarrow \mathcal T \text{ a.s. as } k \rightarrow \infty $$ in the marked Gromov--Hausdorff topology. 
\end{lemma}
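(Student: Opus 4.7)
The plan is to work in the common $\ell_1(\mathbb{U})$-embedding set up in the paragraph preceding the statement, in which the sequences $(\mathcal{T}^k)_{k\ge 0}$ and $(\mathcal{T}_n^*)_{n\ge 0}$ both appear as increasing chains of compact subsets of the same $\mathbb{R}$-tree $\mathcal{T}$, sharing the common family of points $\Sigma_{\mathbf{u}}$, $\mathbf{u}\in\mathbb{U}$. Since $\mathcal{T}^k$ is the closed subtree of $\mathcal{T}$ spanned by $\rho$ and $\{\Sigma_{\mathbf{u}}\colon|\mathbf{u}|\le k\}$, we have $\mathcal{T}^k\subseteq\mathcal{T}^{k+1}\subseteq\mathcal{T}$ automatically. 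The strategy is to establish $\overline{\bigcup_{k\ge 0}\mathcal{T}^k}=\mathcal{T}$ in $\ell_1(\mathbb{U})$ and then pass through compactness to Hausdorff convergence, finally upgrading to marked Gromov--Hausdorff.

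The crux is the inclusion $\bigcup_n\mathcal{T}_n^*\subseteq\overline{\bigcup_k\mathcal{T}^k}$. Each $\mathcal{T}_n^*$ is the closure of $\bigcup_{\mathbf{u}}\llbracket\rho,\Sigma_{\mathbf{u}}\rrbracket$ over the prescribed countable family of indices $\mathbf{u}\in\mathbb{U}$ described just before the statement. Any such $\mathbf{u}$ has finite length $|\mathbf{u}|$, so $\Sigma_{\mathbf{u}}\in\mathcal{T}^{|\mathbf{u}|}$; as $\mathcal{T}^{|\mathbf{u}|}$ is an $\mathbb{R}$-subtree of $\mathcal{T}$ containing both $\rho$ and $\Sigma_{\mathbf{u}}$, it contains the arc $\llbracket\rho,\Sigma_{\mathbf{u}}\rrbracket$ too. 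Hence $\bigcup_{\mathbf{u}}\llbracket\rho,\Sigma_{\mathbf{u}}\rrbracket\subseteq\bigcup_k\mathcal{T}^k$, and the claimed inclusion follows on taking closures. Combining this with $\mathcal{T}=\overline{\bigcup_n\mathcal{T}_n^*}$ (the Hausdorff-in-$\ell_1(\mathbb{U})$ content of Proposition \ref{Construction of T}, per the comment following Theorem \ref{rec con}) and the trivial $\bigcup_k\mathcal{T}^k\subseteq\mathcal{T}$, we obtain $\overline{\bigcup_k\mathcal{T}^k}=\mathcal{T}$. A standard $\varepsilon$-net argument on the compact set $\mathcal{T}$ then turns monotonicity plus a dense union into Hausdorff convergence $\mathcal{T}^k\to\mathcal{T}$: cover $\mathcal{T}$ by finitely many balls $B(x_i,\varepsilon/2)$, pick $y_i\in\mathcal{T}^{k_i}$ within $\varepsilon/2$ of $x_i$, and take $K=\max_i k_i$.

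For the marked points, under Assumption (A) the defining recursion of $g_\beta$ sends the marked point of $\tau_{\mathbf{u}}^{(n)}$ to that of $\tau_{\mathbf{u}1}^{(n)}$, so iterating gives $\Sigma_{n,1^k}=\Sigma_{n,1^n}$ for every $k\le n$; passing to the limit in $\ell_1(\mathbb{U})$, all the $\Sigma_{1^k}$ collapse to a single point $x^*\in\mathcal{T}$, which is precisely the marked point of $\mathcal{T}$. Thus Hausdorff convergence in the common embedding automatically yields marked Gromov--Hausdorff convergence via $d_{\rm GH}^{\rm m}\le d_{\rm H}^{\rm m}$. The main technical point requiring care is the compatibility of the two $\ell_1(\mathbb{U})$-embeddings (one for $(\mathcal{T}_n)$ and one for $(\mathcal{T}_n^*)$), so that the $\Sigma_{\mathbf{u}}$ truly provide common points in the ambient tree; this is exactly what the preparatory paragraph above affords, building on the Banach-space construction of \cite{rw16}.
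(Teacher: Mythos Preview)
Your proof is correct but takes a genuinely different route from the paper's. You argue topologically via the common $\ell_1(\mathbb{U})$-embedding: since every $\Sigma_{\mathbf{u}}$ indexing a point of $\mathcal{T}_n^*$ has finite length and hence lies in $\mathcal{T}^{|\mathbf{u}|}$, you get $\mathcal{T}_n^*\subseteq\overline{\bigcup_k\mathcal{T}^k}$, and then the known Hausdorff convergence $\mathcal{T}_n^*\to\mathcal{T}$ forces $\overline{\bigcup_k\mathcal{T}^k}=\mathcal{T}$; compactness plus monotonicity upgrades this to Hausdorff convergence, and the shared marked point finishes. The paper instead argues quantitatively: it decomposes $\mathcal{T}\setminus\mathcal{T}^k$ into subtrees $(\mathcal{T}_{\mathbf{u}j})$ which, by the recursive construction, are $\overline{\xi}_{\mathbf{u}j}^\beta$-scaled independent copies of $\mathcal{T}$, and then bounds
\[
\mathbb{E}\big[(d_{\rm GH}^{\rm m}(\mathcal{T}^k,\mathcal{T}))^p\big]
\le \mathbb{E}\big[{\rm ht}(\mathcal{T})^p\big]\Big(\mathbb{E}\Big[\sum_{j\ge 0}\xi_j^{p\beta}\Big]\Big)^{\!k}\,\mathbb{E}\Big[\sum_{t\ge 0}(\xi_0^{p\beta}+\xi_1^{p\beta})^t\Big],
\]
which decays geometrically in $k$ for $p>1/\beta$. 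Your approach is cleaner and uses only the density of the spanning points $\Sigma_{\mathbf{u}}$, but it leans more heavily on the detailed coupling in the preparatory paragraph (that the $\Sigma_{\mathbf{u}}$ really are common to both constructions). The paper's argument is more self-contained once one accepts the embedding $\mathcal{T}^k\subseteq\mathcal{T}$, and it yields the stronger conclusion of $\mathcal{L}^p$-convergence with an explicit rate, in line with the moment estimates used elsewhere in Section~\ref{secattr}.
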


\begin{proof} Since the sequence of trees $(\mathcal T^k,k \geq 0)$ is increasing and embedded in $\mathcal T$ with the same marked point, it remains to show that the almost 
sure limit of $\mathcal T^k$ is the whole of $\mathcal T$. 

Let $(\mathcal T_{\mathbf{u}j}, j \geq 2)$, $\mathbf{u} \in \bigcup_{t=0}^{\infty} \mathbb{N}^{k}\times \{0,1\}^t$, denote the connected components of $\mathcal T \setminus \mathcal T^k$, $k \geq 0$, where we write $(\mathcal T_{{u_1 \ldots u_n}j}, j \geq 2)$ for the subtrees of $\mathcal T \setminus \mathcal T^k$ rooted at the edge of $\mathcal T_k$ of length $\widetilde{L}_{\infty, {u_1 \ldots u_k}}^k$, $n \geq k$, using notation from Proposition \ref{Tk's}. Exploiting the fact that each $\mathcal T_{\mathbf{u}j}$ is a $\overline{\xi}_{\mathbf{u}j}$ scaled independent copy of $\mathcal T$, we obtain for $k \geq 0$ and $p > 1/\beta$,

\begin{align*} \bE\left[ \left( d_{\rm GH}^{\rm m}\left(\mathcal T^k, \mathcal T \right)\right)^p \right] &\leq \bE\left[ \left( \max_{\mathbf{u} \in \bigcup_{t=0}^{\infty} \mathbb{N}^{k}\times \{0,1\}^t, j \geq 2 } {\rm ht}\left(\mathcal T_{\mathbf{u}j} \right)\right)^p \right]\\
& \leq  \bE\left[\left({\rm ht}\left(\mathcal T\right)\right)^p\right] \sum_{\mathbf{u} \in \bigcup_{t=0}^{\infty} \mathbb{N}^{k}\times \{0,1\}^t, j\geq 2 } \bE\left[ \overline{\xi}_{\mathbf{u}j}^{p\beta} \right] \\
&\leq \bE\left[\left({\rm ht}\left(\mathcal T\right)\right)^p\right] \sum_{\mathbf{u} \in \bigcup_{t=0}^{\infty} \mathbb{N}^{k}\times \{0,1\}^t } \bE\left[ \overline{\xi}_{\mathbf{u}}^{p\beta} \right] \\
&\leq  \bE\left[\left({\rm ht}\left(\mathcal T\right)\right)^p\right]  \left( \bE\! \left[ \sum_{j \geq 0}\xi_j^{p\beta}  \right]\right)^{\!k} \!\bE \left[ \sum_{t=0}^\infty \left(\xi_0^{p\beta}+\xi_1^{p\beta}\right)^t \right]\!\rightarrow 0 \ \text{ as\,} k\!\rightarrow\!\infty
\end{align*}
as $\bE\left[{\rm ht}\left(\mathcal T\right)^p\right] < \infty$ and $\bE \left[\sum_{j\geq0} \xi_j^{p\beta} \right] <1$. Hence, for any $\epsilon > 0$ and $p > 1/\beta$, 
$$ \mathbb{P} \left(  d_{\rm GH}^{\rm m}\left(\mathcal T^k, \mathcal T \right) > \epsilon \right) \leq \epsilon^{-p} \bE \left[\left(  d_{\rm GH}^{\rm m}\left(\mathcal T^k, \mathcal T \right)\right)^p \right] \rightarrow 0$$
as $k \rightarrow \infty$. Therefore, due to the embedding of $(\mathcal T^k, k\geq 0)$ into $\mathcal T$ , $\mathcal T^k \rightarrow \mathcal T$ a.s. as $k\rightarrow \infty$.
\end{proof}

\subsection{Attraction of the RDE and the proof of Theorem \ref{uni and attr}}\label{secattr}

Next, we show that the supremum of the height moments of $\mathcal T_n$ is finite, employing the recursive construction of CRTs for a generalised string defined in a similar manner as in the discussion before Proposition \ref{Construction of T}.

\begin{lemma} \label{sup tree} Under Assumption {\rm(A)}, the sequence of trees $(\mathcal T_n, n \geq 0)$ satisfies \begin{equation} \label{bounded height moments}
\bE\left[ \sup_{n \geq 0} {\rm  ht} \left( \mathcal T_n \right)^p \right] < \infty \text{ for all } p > 0. \end{equation} \end{lemma}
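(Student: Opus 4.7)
The plan is to bound the root-to-leaf distances in $\mathcal T_n$ pathwise by the main-spine length plus a single \emph{detour} into one of the subtrees attached at a branch point on the spine, and then iterate this decomposition via an $\mathcal L^p$-contraction argument to control the supremum over $n$.

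First, for any leaf $\ell$ of $\mathcal T_n$, the path from $\rho$ to $\ell$ follows the main spine of $\tau_\emptyset^{(n)}$ (of total length $S_\emptyset^{(n)}=\widetilde L_n^0$) until it diverges at some branch point $b$ on that spine, from which it enters exactly one detour subtree $\tau_{\mathbf v i}^{(n)}$ with $\mathbf v \in \bigcup_{k<n}\{0,1\}^k$ and $i \geq 2$. Since $d(\rho,b) \leq S_\emptyset^{(n)}$ and $d(b,\ell) \leq \overline\xi_{\mathbf v i}^\beta\,\mathrm{ht}(\tau_{\mathbf v i}^{(n)})$, we obtain the deterministic bound
$$\mathrm{ht}(\mathcal T_n) \leq S_\emptyset^{(n)} + \sup_{\substack{\mathbf v \in \bigcup_{k<n}\{0,1\}^k \\ i \geq 2}} \overline\xi_{\mathbf v i}^\beta\,\mathrm{ht}\bigl(\tau_{\mathbf v i}^{(n)}\bigr).$$
Writing $S_\emptyset^{(n)} = m L_n + \varepsilon_n$ with $m = \mathbb E[Y_\emptyset]$ and $L_n$ the mean-$1$ martingale of Lemma~\ref{xi martingale}, Doob's maximal inequality gives $\|\sup_n L_n\|_p < \infty$ for all $p > 1$; extending the $\mathcal L^2$-calculation in the proof of Proposition~\ref{Tk's} via the multi-variate moment-splitting of Lemma~\ref{xi martingale} shows $\mathbb E[|\varepsilon_n|^p] \leq C_p \rho_p^n$ for some $\rho_p < 1$ and all sufficiently large $p$. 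Summing in $n$ yields $\|\sup_n |\varepsilon_n|\|_p < \infty$, and hence $S^* := \sup_n S_\emptyset^{(n)} \in \mathcal L^p$ for every $p > 0$ (via Lyapunov for small $p$).

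Next, define $H^{*,N} := \sup_{0 \leq n \leq N}\mathrm{ht}(\mathcal T_n)$. By the branching property of the recursive tree framework, for each $(\mathbf v, i)$ with $\mathbf v \in \{0,1\}^*,\,i \geq 2$, the process $(\tau_{\mathbf v i}^{(n)})_{n > |\mathbf v|}$ is an independent copy of $(\mathcal T_{n-|\mathbf v|-1})_n$, the collection $\{\tau_{\mathbf v i}^{(n)}\}_{\mathbf v,i}$ consists of mutually independent sub-families, and each $\tau_{\mathbf v i}^{(n)}$ is independent of $\overline\xi_{\mathbf v i}$. Taking $\sup_{n \leq N}$ in the pathwise bound and using monotonicity of $H^{*,\cdot}$ in its parameter yields
$$H^{*,N} \leq S^* + \sup_{\substack{\mathbf v \in \{0,1\}^* \\ i \geq 2}} \overline\xi_{\mathbf v i}^\beta\, H^{*,N-1}_{\mathbf v i},$$
with $H^{*,N-1}_{\mathbf v i}$ i.i.d.\ copies of $H^{*,N-1}$ independent of the weights. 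For $p > 1/\beta$ we have $p\beta > 1$, so $\xi_i^{p\beta} \leq \xi_i$ for each $i$ with strict inequality a.s.\ for $i \in \{0,1\}$ (as $\xi_0, \xi_1 \in (0,1)$ a.s.); therefore $\mathbb E[\sum_i \xi_i^{p\beta}] < 1$. Setting $\alpha := \mathbb E[\xi_0^{p\beta}+\xi_1^{p\beta}]$ and $\gamma := \mathbb E[\sum_{i \geq 2}\xi_i^{p\beta}]$, the independence of $\xi$-vectors across distinct nodes of the RTF gives
$$\sum_{\substack{\mathbf v \in \{0,1\}^* \\ i \geq 2}}\mathbb E[\overline\xi_{\mathbf v i}^{p\beta}] = \gamma \sum_{k \geq 0}\alpha^k = \frac{\gamma}{1-\alpha} < 1.$$
Dominating the supremum by a sum and invoking independence,
$$\|H^{*,N}\|_p \leq \|S^*\|_p + r\,\|H^{*,N-1}\|_p, \qquad r := \bigl(\gamma/(1-\alpha)\bigr)^{1/p} < 1.$$
Iterating from $H^{*,0} = Y_\emptyset \in \mathcal L^p$ (Assumption~(A)), monotone convergence gives $\|\sup_n\mathrm{ht}(\mathcal T_n)\|_p = \lim_N\|H^{*,N}\|_p \leq \|S^*\|_p/(1-r) < \infty$ for every $p > 1/\beta$, and Lyapunov's inequality extends this to all $p > 0$.

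The principal difficulty is the spine estimate: establishing exponential $\mathcal L^p$-decay of the residual $\varepsilon_n = S_\emptyset^{(n)} - m L_n$ for arbitrary $p$ requires carefully tracking how the $\xi$-weights interact across the $2^n$ terms of $\widetilde L_n^0$, generalising the splitting calculation of Lemma~\ref{xi martingale}. Once $S^* \in \bigcap_p \mathcal L^p$ is established, the rest of the argument is a clean bootstrap through the truncated variables $H^{*,N}$ and an $\mathcal L^p$-contraction.
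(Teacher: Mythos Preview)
Your argument is correct but proceeds quite differently from the paper. The paper gives a two-line coupling proof: it modifies the generalised string $\zeta$ of \eqref{string for T} by replacing each $\lim_{m\to\infty}$ by $\sup_{m\ge 0}$ in the definitions of the interval length and atom locations, obtaining a string $\widehat{\zeta}$ whose length is $\sup_{n\ge 0}\widetilde L_n^0$. Applying Theorem~\ref{rec con} to $\widehat{\zeta}$ yields a CRT $\widehat{\mathcal T}$ with $\mathbb E[{\rm ht}(\widehat{\mathcal T})^p]<\infty$ for all $p$, and by construction every branch point in $\widehat{\mathcal T}$ sits at least as far from the root as in any $\mathcal T_n$, so ${\rm ht}(\mathcal T_n)\le{\rm ht}(\widehat{\mathcal T})$ pathwise for all $n$.

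Your route---spine/detour decomposition, Doob's inequality for $\sup_n L_n$, exponential $\mathcal L^p$-decay of the residual $\varepsilon_n$, and then an $\mathcal L^p$-contraction through the truncations $H^{*,N}$---is more hands-on and entirely self-contained: it does not appeal to the black-box Theorem~\ref{rec con} from \cite{rw16}. The price is that you must actually establish the geometric decay of $\mathbb E[|\varepsilon_n|^p]$ via the moment-splitting recursion (the step you flag as the principal difficulty), whereas the paper hides all of this inside the existing height-moment bound for the dominating CRT. Conversely, your argument makes the contraction mechanism $\|H^{*,N}\|_p\le\|S^*\|_p+r\|H^{*,N-1}\|_p$ with $r=(\gamma/(1-\alpha))^{1/p}<1$ completely explicit, which is informative in its own right and would transfer to settings where no analogue of Theorem~\ref{rec con} is available.
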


\begin{proof} The idea of the proof is to construct a CRT $\widehat{\mathcal{T}}$ whose height dominates ${\rm ht}(\mathcal{T}_n)$ for all $n\ge 0$. 
  Indeed, we apply the recursive construction of CRTs (cf. the construction of $\mathcal T$) to the generalised string $\widehat{\zeta}$ obtained
  by modifying the definition of $\zeta$ in \eqref{string for T} by replacing $\lim_{m \rightarrow \infty}$ by $\sup_{m \geq 0}$ in the definition 
  of interval length and atom locations. In particular, the length of the interval is given by 
  $\sup_{n \geq 0} \sum_{\mathbf{u} \in \{0,1\}^n} \overline{\xi}_{\mathbf u}^\beta Y_{\mathbf{u}}$.

  This ensures that each atom is placed at the furthest position away from the root which appears in the course of the construction of 
  $\mathcal T_n, n \geq 0$. Hence, all distances between branch points, leaves and the root are larger than in any of the trees 
  $\mathcal T_n, n\geq 0$.

  Applying Theorem \ref{rec con} to the generalised string $\widehat{\zeta}$, we obtain a CRT $\widehat{\mathcal T}$ which has finite height 
  moments of all orders. By the underlying coupling, ${\textrm{ht}}(\mathcal T_n) \leq {\textrm{ht}}(\widehat{\mathcal T})$ for all $n \geq 0$, 
  i.e., the claim follows.
\end{proof}

\begin{corollary} \label{limit tree2} Consider the sequences of trees $(\mathcal T_n, n \geq 0)$ and $(\mathcal T^k_n, n \geq k)$, $k\geq 0$, where we recall that, for $n \geq k$, $\mathcal T_n^k$ is the subtree of $\mathcal T_n$ spanned by the root and the leaves up to depth $k$. Then, for any $\epsilon >0$, \begin{equation} \lim_{k \rightarrow \infty} \limsup_{n \rightarrow \infty} \mathbb{P}\left(d_{\rm GH}^{\rm m}\left(\mathcal T_n^k, \mathcal T_n\right) > \epsilon \right)=0.  \end{equation}
\end{corollary}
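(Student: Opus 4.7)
\textit{Proof proposal.} The plan is to mimic the argument of Lemma \ref{T^k converges to T}, but working with the finite-$n$ trees $\cT_n$ and $\cT_n^k$ in place of the infinite-depth limits, and leveraging Lemma \ref{sup tree} to obtain an upper bound uniform in $n$. Under Assumption (A), the subtree $\cT_n^k$ consists exactly of $\cT_n$ with the side branches past recursion depth $k$ removed. Formally, the maximal connected components of $\cT_n \setminus \cT_n^k$ are indexed by triples $(\mathbf{u}, \mathbf{v}, j)$ with $\mathbf{u} \in \mathbb{N}^k$, $\mathbf{v} \in \{0,1\}^m$ for some $m \in \{0, 1, \ldots, n-k-1\}$, and $j \geq 2$; each corresponds to $\tau_{\mathbf{u}\mathbf{v}j}^{(n)}$ rescaled by $\overline{\xi}_{\mathbf{u}\mathbf{v}j}^\beta$ and attached at its root to a point on the spine structure of $\cT_n^k$.

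Since both the root of $\cT_n$ and the marked point $\Sigma_{n,1\cdots 1}$ lie in $\cT_n^k$, the retraction correspondence sending each point of $\cT_n$ to its nearest point in $\cT_n^k$ is admissible in $\cC^{\rm m}$ and yields
\begin{equation*}
  d_{\rm GH}^{\rm m}(\cT_n^k, \cT_n) \leq \max_{(\mathbf{u}, \mathbf{v}, j)} \overline{\xi}_{\mathbf{u}\mathbf{v}j}^\beta \, {\rm ht}\bigl(\tau_{\mathbf{u}\mathbf{v}j}^{(n)}\bigr).
\end{equation*}
For $p > 1/\beta$, Markov's inequality together with a union bound gives
\begin{equation*}
  \bP\bigl(d_{\rm GH}^{\rm m}(\cT_n^k, \cT_n) > \epsilon\bigr) \leq \epsilon^{-p} \sum_{(\mathbf{u}, \mathbf{v}, j)} \bE\bigl[\overline{\xi}_{\mathbf{u}\mathbf{v}j}^{p\beta}\bigr] \, \bE\bigl[{\rm ht}(\tau_{\mathbf{u}\mathbf{v}j}^{(n)})^p\bigr],
\end{equation*}
where the factorisation of the expectation uses the branching property of the recursive tree framework: $\overline{\xi}_{\mathbf{u}\mathbf{v}j}$ depends only on $\xi$-vectors along the path to $\mathbf{u}\mathbf{v}j$, whereas $\tau_{\mathbf{u}\mathbf{v}j}^{(n)}$ is built from the $\xi$-vectors strictly below $\mathbf{u}\mathbf{v}j$. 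Since $\tau_{\mathbf{u}\mathbf{v}j}^{(n)} \stackrel{d}{=} \cT_{n-k-m-1}$, Lemma \ref{sup tree} supplies a uniform-in-$n$ bound $\bE[{\rm ht}(\tau_{\mathbf{u}\mathbf{v}j}^{(n)})^p] \leq M_p := \bE[\sup_{n' \geq 0}{\rm ht}(\cT_{n'})^p] < \infty$.

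The remaining scaling-factor sum factorises via independence across levels as
\begin{equation*}
  \sum_{(\mathbf{u}, \mathbf{v}, j)} \bE\bigl[\overline{\xi}_{\mathbf{u}\mathbf{v}j}^{p\beta}\bigr] \leq \Bigl(\bE\bigl[\textstyle\sum_{i \geq 0}\xi_i^{p\beta}\bigr]\Bigr)^{\!k} \cdot \frac{\bE\bigl[\sum_{i \geq 2}\xi_i^{p\beta}\bigr]}{1 - \bE[\xi_0^{p\beta} + \xi_1^{p\beta}]}.
\end{equation*}
The non-degeneracy assumptions $\bP(\xi_0>0,\xi_1>0)=1$ and $\bP(\xi_0+\xi_1<1)=1$ force $\max_{i\geq 0}\xi_i<1$ a.s., so that $\sum_i \xi_i^{p\beta} \leq (\max_i \xi_i)^{p\beta-1} < 1$ a.s.\ for $p > 1/\beta$; taking expectations yields $\bE[\sum_i \xi_i^{p\beta}] < 1$ and $\bE[\xi_0^{p\beta}+\xi_1^{p\beta}] < 1$, so the upper bound is independent of $n$ and decays geometrically in $k$, proving the corollary. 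The main technical obstacle is precisely keeping the height moments bounded uniformly in $n$, and this is exactly what Lemma \ref{sup tree} delivers via its dominating auxiliary CRT construction; once that is in place, the argument is a direct transcription of the proof of Lemma \ref{T^k converges to T}.
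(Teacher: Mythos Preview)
Your proof is correct and follows essentially the same approach as the paper: decompose $\cT_n\setminus\cT_n^k$ into rescaled subtrees indexed by $\mathbb{N}^k\times\{0,1\}^t\times\{2,3,\ldots\}$, apply Markov's inequality and a union bound, factor the expectation via the branching property, invoke Lemma \ref{sup tree} for a uniform-in-$n$ height moment bound, and control the remaining $\xi$-sum geometrically in $k$. The only minor difference is that you bound the $(\mathbf{v},j)$-part of the sum by $\bE[\sum_{i\ge 2}\xi_i^{p\beta}]/(1-\bE[\xi_0^{p\beta}+\xi_1^{p\beta}])$, whereas the paper passes to the cruder estimate $\overline{\xi}_{\mathbf{v}}^{p\beta}\le\overline{\xi}_{\mathbf{v}}$ to obtain $(1-\bE[\xi_0+\xi_1])^{-1}$; your bound is slightly sharper but the argument is otherwise identical.
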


\begin{proof} Let $\mathcal T_{n,\mathbf{u}}^k\setminus\{\rho_{n,\mathbf{u}}^k\}, \mathbf{u} \in \bigcup_{t=0}^{n-k-1} \mathbb{N}^{k}\times \{0,1\}^t \times\{2,3,\ldots\}$, denote the subtrees of $\mathcal T_n \setminus \mathcal T_n^k$, $n \geq k+1$:
$$\mathcal T_n \setminus \mathcal T_n^k = \bigcup_{\mathbf{u} \in \bigcup_{t=0}^{n-k-1} \mathbb{N}^{k}\times \{0,1\}^t \times \{2,3,\ldots\}}  \mathcal T_{n,\mathbf{u}}^k\setminus\{\rho_{n,\mathbf{u}}^k\}.$$
Then, for any $\epsilon > 0$ and $p > 1/\beta$, 
\begin{align*}
\mathbb{P}\left(d_{\rm GH}^{\rm m}\left(\mathcal T_n^k, \mathcal T_n\right) > \epsilon \right) &\leq \epsilon^{-p} \bE \left[ \max_{\mathbf{u} \in \bigcup_{t=0}^{n-k} \mathbb{N}^{k}\times \{0,1\}^t  \times \{2,3,\ldots\}}   {\rm ht} \left(\mathcal T_{n,\mathbf{u}}^k\right)^p \right] \\
& \leq \epsilon^{-p} \sum_{\mathbf{u} \in \bigcup_{t=0}^{n-k-1} \mathbb{N}^{k}\times \{0,1\}^t  \times\{2,3,\ldots\}} \mathbb{E} \left[ \overline{\xi}_{\mathbf{u}}^{p\beta}\right] \bE \left[{\rm ht}\left(\mathcal T_{n-\lvert \mathbf{u}\rvert}\right)^p \right].
\end{align*}
By Lemma \ref{sup tree}, it remains to show that 
\begin{equation} \lim_{k \rightarrow \infty} 
\limsup_{n \rightarrow \infty} 
\sum_{\substack{\mathbf{u} \in \bigcup_{t=0}^{n-k-1} \mathbb{N}^{k} \times \{0,1\}^t  \times \{2,3,\ldots\}}} \bE \left[\overline{\xi}_{\mathbf{u}}^{p\beta}\right] = 0, \label{limsup} 
\end{equation}
First, note that the left-hand side of \eqref{limsup} is bounded above by 
\begin{equation} \label{upbound1} \lim_{k \rightarrow \infty } \sup_{n \geq k+1} \sum_{\mathbf{u} \in  \mathbb{N}^{k}} \sum_{t=0}^{n-k-1} \sum_{\mathbf{v} \in\{0,1\}^t  \times \{2,3,\ldots\} } \mathbb{E} \left[ \overline{\xi}_{ \mathbf{u}\mathbf{v} }^{p\beta}\right], \end{equation}
where we also slightly rewrote the expression. 
By the fact that $(\xi_{\mathbf{u}j}, j \geq 0), \mathbf{u} \in \mathbb{U}$, are i.i.d., we have $$\mathbb{E} \left[ \overline{\xi}_{ \mathbf{u}\mathbf{v} }^{p\beta}\right] =
\mathbb{E} \left[ \overline{\xi}_{ \mathbf{u}}^{p\beta}\right]
\mathbb{E} \left[ \overline{\xi}_{ \mathbf{v} }^{p\beta}\right] \leq \mathbb{E} \left[ \overline{\xi}_{ \mathbf{u}}^{p\beta}\right]
\mathbb{E} \left[ \overline{\xi}_{ \mathbf{v} }\right],$$
where we used $\overline{\xi}_{\mathbf{v}}<1$ a.s. and $p\beta > 1$ in the last inequality. 

Furthermore, as $\sum_{j \geq 0} \xi_{\mathbf{v}j}=1$, 
\begin{align*} \sum_{t=0}^{n-k-1}\sum_{\mathbf{v} \in\{0,1\}^t  \times \{2,3,\ldots\} } \mathbb{E} \left[ \overline{\xi}_{\mathbf{v} }\right] 
          \leq \sum_{t=0}^{n-k-1}\left(\bE\left[\xi_0+\xi_1\right]\right)^t
          \leq \sum_{t=0}^{\infty}\left(\bE\left[\xi_0+\xi_1\right]\right)^t= \left(1-\bE\left[\xi_0+\xi_1\right]\right)^{-1}
\end{align*}
where we also used the i.i.d.\ property of the $(\xi_{\mathbf{v}j},j\ge 0)$, $\mathbf{v} \in \bigcup_{t=0}^{n-k-1} \{0,1\}^t$, and $\bE\left[\xi_0+\xi_1\right] < 1$.
Hence,  \eqref{upbound1} can be further bounded above by 
\begin{equation} \label{upbound2}
  \left(1-\bE\left[\xi_0+\xi_1\right]\right)^{-1} \lim_{k \rightarrow \infty } 
    \sum_{\mathbf{u} \in  \mathbb{N}^{k} } \mathbb{E} \left[ \overline{\xi}_{ \mathbf{u} }^{p\beta}\right] 
  =\left(1-\bE\left[\xi_0+\xi_1\right]\right)^{-1}\lim_{k\rightarrow\infty}\left(\bE\left[\sum_{i\ge 0}\xi_i^{p\beta}\right]\right)^{k}. \end{equation} 

As $p\beta >1$ and $0 \leq \xi_i < 1$ a.s. for all $i \geq 0$, $\bE \left[ \sum_{i \geq 0} \xi_i^{p \beta}\right]<1$, and we conclude that \eqref{upbound2} is $0$.
\end{proof}

We are now ready to prove our final result. 

\begin{corollary} \label{limit tree} Under Assumption {\rm (A)}, let $(\mathcal T_n, n \geq 0)$ be as above, and let $\mathcal T$ be the tree from Proposition \ref{Construction of T}. We have the convergence $$\mathcal T_n \rightarrow \mathcal T \text{ in probability as } n \rightarrow \infty$$
in the marked Gromov--Hausdorff topology. 
\end{corollary}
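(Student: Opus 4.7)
The plan is to combine the three prior results via a triangle inequality in the marked Gromov--Hausdorff metric $d_{\rm GH}^{\rm m}$. Since the trees $\cT_n$, $\cT_n^k$, $\cT^k$ and $\cT$ have been constructed on a common probability space through the $\ell_1(\bU)$-embedding and the underlying recursive tree process (see the discussion preceding Lemma \ref{T^k converges to T}), the inequality
\[
d_{\rm GH}^{\rm m}(\cT_n, \cT) \leq d_{\rm GH}^{\rm m}(\cT_n, \cT_n^k) + d_{\rm GH}^{\rm m}(\cT_n^k, \cT^k) + d_{\rm GH}^{\rm m}(\cT^k, \cT)
\]
holds pathwise for every $n \geq k$.

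Given $\epsilon, \delta > 0$, I would first select $k$ large enough to simultaneously control the first and third terms, which compare depth-$k$ truncations with the corresponding full objects. Lemma \ref{T^k converges to T} gives almost-sure (hence in-probability) convergence of $\cT^k$ to $\cT$, so I can choose $k$ with $\mathbb{P}(d_{\rm GH}^{\rm m}(\cT^k, \cT) > \epsilon/3) < \delta/3$; Corollary \ref{limit tree2} further allows me to choose $k$ large enough that $\limsup_{n\to\infty}\mathbb{P}(d_{\rm GH}^{\rm m}(\cT_n^k, \cT_n) > \epsilon/3) < \delta/3$. With such a $k$ now fixed, Proposition \ref{Tk's} yields $\cT_n^k \to \cT^k$ in probability as $n\to\infty$, so that $\mathbb{P}(d_{\rm GH}^{\rm m}(\cT_n^k, \cT^k) > \epsilon/3) < \delta/3$ for all sufficiently large $n$. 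A union bound applied to the triangle inequality then gives $\mathbb{P}(d_{\rm GH}^{\rm m}(\cT_n, \cT) > \epsilon) < \delta$ for all sufficiently large $n$, which is precisely the claimed convergence in probability.

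The argument poses essentially no substantive obstacle, since all the analytic work has been carried out in the preceding lemmas and propositions. The only point requiring care is the order of quantifiers: the depth $k$ must be chosen \emph{first} to absorb the two terms whose control is only uniform-in-$n$-asymptotically (Lemma \ref{T^k converges to T} and Corollary \ref{limit tree2}), and only once $k$ is fixed can the fixed-depth convergence from Proposition \ref{Tk's} be invoked to handle the remaining middle term.
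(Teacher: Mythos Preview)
Your proposal is correct and follows essentially the same approach as the paper: a triangle inequality splitting $d_{\rm GH}^{\rm m}(\cT_n,\cT)$ into the three pieces $d_{\rm GH}^{\rm m}(\cT_n,\cT_n^k)$, $d_{\rm GH}^{\rm m}(\cT_n^k,\cT^k)$, $d_{\rm GH}^{\rm m}(\cT^k,\cT)$, controlled respectively by Corollary~\ref{limit tree2}, Proposition~\ref{Tk's}, and Lemma~\ref{T^k converges to T}. If anything, your treatment of the order of quantifiers (fix $k$ first via the two uniform-in-$n$ estimates, then send $n\to\infty$ in the middle term) is spelled out more carefully than the paper's terse ``let $n\to\infty$ then $k\to\infty$''.
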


\begin{proof} Let $\epsilon >0$, and use the triangle inequality twice to get, for $n \in \mathbb{N}$ and $k\le n$,
$$\mathbb{P}(d_{\rm GH}^{\rm m}(\mathcal T_n, \mathcal T) > 3\epsilon ) \leq 
\mathbb{P}(d_{\rm GH}^{\rm m}(\mathcal T_n, \mathcal T_n^k) > \epsilon ) +
\mathbb{P}( d_{\rm GH}^{\rm m}(\mathcal T_n^k, \mathcal T^k) > \epsilon ) +
\mathbb{P}( d_{\rm GH}^{\rm m}(\mathcal T^k, \mathcal T) > \epsilon ).$$

All three terms converge to $0$ as $n\rightarrow \infty$, and then $k \rightarrow \infty$, cf. Proposition \ref{Tk's}, Lemma \ref{T^k converges to T} and Corollary \ref{limit tree2}.
\end{proof}

Theorem \ref{uni and attr} is now a direct consequence of Corollary \ref{limit tree}.

\begin{pfofthm42} Let $\eta\in \mathcal P_{\infty}(\mathbb T_{ m})$ be a general distribution of a marked 
  $\mathbb R$-tree. For $(\mathcal T_0, d_0, \rho_0, x_0) \sim \eta$, we define the induced distribution 
  $\eta^\circ \in \mathcal P_\infty(\mathbb T_{\rm m}^{\rm tr})$ as the distribution of $\llbracket\rho_0,x_0\rrbracket$. We construct coupled 
  $(\cT_n,n\ge 0)$ and $(\cT_n^\circ,n\ge 0)$ from the same recursive tree framework $((\xi_{\mathbf{u}i},i\ge 0),\mathbf{u}\in\mathbb{U})$ and
  from coupled systems of i.i.d.\ $\eta$- and $\eta^\circ$-distributed trees, according to  \eqref{treesequ}, with $\mathcal{T}_0\sim\eta$ and
  $\mathcal{T}_0^\circ=\llbracket\rho_0,x_0\rrbracket\sim\eta^\circ$.  
  Then $\cT_0\setminus\cT_0^\circ$ consists of subtrees of heights bounded by ${\rm ht}(\cT_0)$. By construction, 
  $\cT_n\setminus\cT_n^\circ$ consists of subtrees of heights bounded by the maximum of $\overline{\xi}_{\mathbf{u}}$-scaled independent 
  copies of ${\rm ht}(\cT_0)$. Hence,
  $$\bE\left(\left(d_{\rm GH}^{\rm m}(\cT_n,\cT_n^\circ)\right)^p\right)
%  &\le\bE\left(\sup_{\mathbf{u}\in\bN^n,v\in\mathbb{U}^*}\left({\rm ht}(\cS_{n,\mathbf{u}\mathbf{v}})\right)^p\right)\\
%  &\le\bE\left(\sum_{\mathbf{u}\in\bN^n}\overline{\xi}_{\mathbf{u}}^{p\beta}\right)
%     \bE\left(\max_{\mathbf{v}\in\mathbb{U}^*}\left({\rm ht}(\mathcal{S}_{\mathbf{v}})\right)^p\right)\\
  \le\bE\left(\left({\rm ht}(\cT_0)\right)^p\right)\left(\bE\left(\sum_{j\ge 0}\xi_j^{p\beta}\right)\right)^n\rightarrow 0,
  $$
%letting $\mathcal T_{\max} \sim \eta_{\max}$, 
%$$\bE\left[\left( d_{\rm GH}^{\rm m}\left(\mathcal T_n^+, \mathcal T_n\right)\right)^p \right]  \leq \bE\left[ \left({\rm ht} \left(\mathcal T_{\max}\right)\right)^p \right] \bE \left[ \sum_{j \geq 1} \xi_{j}^{p \beta}\right]^n \rightarrow \infty$$ 
as $n \rightarrow \infty$. By Corollary \ref{limit tree}, we have $\mathcal{T}^\circ_n\rightarrow\cT$ and hence $\mathcal T_n \rightarrow \mathcal T$ in probability as $n \rightarrow \infty$ in the marked Gromov--Hausdorff topology. Uniqueness follows from the attraction property.
\end{pfofthm42}

\bibliographystyle{abbrv}
\bibliography{stablerde2}

\end{document}